\renewcommand{\Re}{\operatorname{Re}}
\newcommand{\Hom}{\operatorname{Hom}}
\newcommand{\Ext}{\operatorname{Ext}}
\renewcommand{\subset}{\subseteq}
\newtheorem{theorem}{Theorem}[section]
\newtheorem{lemma}[theorem]{Lemma}
\newtheorem{proposition}[theorem]{Proposition}
\newtheorem{remark}[theorem]{Remark}
\newtheorem{conjecture}[theorem]{Conjecture}
\newtheorem{definition}[theorem]{Definition}
\newtheorem{corollary}[theorem]{Corollary}
\title[non-generic local Gan-Gross-Prasad]{On restriction of unitarizable representations of general linear groups and the non-generic local Gan-Gross-Prasad conjecture}
\author{Maxim Gurevich}
\date{\today}
\newcommand{\gotM}{\mathfrak{m}}
\newcommand{\gotN}{\mathfrak{n}}
\DeclareMathOperator{\irr}{Irr}
\DeclareMathOperator{\seg}{Seg}
\DeclareMathOperator{\supp}{supp}
\DeclareMathOperator{\multi}{Mult}
\DeclareMathOperator{\im}{Im}
\DeclareMathOperator{\sspan}{span}
\begin{document}

%\renewcommand{\chaptername}{}
%\renewcommand{\thechapter}{}
%\title{Restriction of Arthur-type $GL_n$ representations}
%\title[Quantum invariants for decomposition problems]{Quantum invariants for decomposition problems in type $A$ rings of representations}
%\author{Maxim Gurevich}
\address{Department of Mathematics, Technion - Israel Institute of Technology, Haifa, Israel, 3200003.}
\email{maxg@technion.ac.il}

\begin{abstract}
We prove the first direction of a recently posed conjecture by Gan-Gross-Prasad, which predicts branching laws that govern restriction from $p$-adic $GL_n$ to $GL_{n-1}$ of irreducible smooth representations within the Arthur-type class.

We extend this prediction to the full class of unitarizable representations, by exhibiting a combinatorial relation that must be satisfied for any pair of irreducible representations, in which one appears as a quotient of the restriction of the other.

We settle the full conjecture for the cases in which either one of the representations in the pair is generic.

The method of proof involves a transfer of the problem, using the Bernstein decomposition and the quantum affine Schur-Weyl duality, into the realm of quantum affine algebras. This restatement of the problem allows for an application of the combined power of a result of Hernandez on cyclic modules together with the Lapid-M\'{i}nguez criterion from the $p$-adic setting.

\end{abstract}

\maketitle
\section{Introduction}

Let $\pi$ be a smooth irreducible representation of the group $GL_n(F)$, where $F$ is a $p$-adic field. Let us consider $GL_{n-1}(F)$ as a subgroup of $GL_n(F)$, embedded in a natural way, which in matrix form is described as
\[
\left( \begin{array}{cc} GL_{n-1}(F) & 0_{n-1,1} \\ 0_{1,n-1} & 1 \end{array} \right) < GL_n(F)\;.
\]
We study the branching laws that govern the decomposition of the restricted representation $\pi|_{GL_{n-1}(F)}$ into irreducible representations of $GL_{n-1}(F)$. As an approachable goal, we focus on the description of the possible isomorphism classes of irreducible representations which appear as \textit{quotients} of $\pi|_{GL_{n-1}(F)}$.

One fundamental result in that direction was achieved in \cite{aiz-gur2}, where it was shown that
\[
\Hom_{GL_{n-1}(F)}(\pi,\sigma)
\]
is at most one-dimensional, for all $\pi \in \irr GL_n(F)$, $\sigma\in \irr GL_{n-1}(F)$.

As a step forward, we are seeking for a meaningful description of the collection of pairs $(\pi,\sigma)$, for which the latter morphism space is non-zero.

An irreducible representation is said to be generic, if it can be produced on a Whittaker model of functions. It is a classical fact proved in \cite{jpss} through the study of $L$-functions, that for every pair of generic representations $(\pi,\sigma)$ as above, $\Hom_{GL_{n-1}(F)}(\pi,\sigma)\neq 0$.

In later years, great efforts were focused on posing and proving analogous rules for (quasi-split) classical groups, in place of the general linear group \cite{gp1,gp2,ggp1,ggp2}. The resulting branching laws became known as the \textit{local Gan-Gross-Prasad} conjectures. In similarity with $GL_n$, these laws for classical groups were always set to pertain the generic case, in the sense of generic Langlands parameters which parameterize irreducible representations.

Back in the $GL_n$ case, an effective answer for the general restriction problem is still largely considered unpractical. Yet, recently Gan-Gross-Prasad \cite{ggp-non} have revisited this problem in an attempt to formulate branching laws that would extend beyond the generic case.

They stipulated a principle that clear combinatorial rules should describe the pairs $(\pi,\sigma)$ with non-zero $\Hom_{GL_{n-1}(F)}(\pi,\sigma)$, when $\pi$ and $\sigma$ belong to a well-behaved class of representations. More precisely, they formulated a conjecture which concerns a subclass of unitarizable representations which is described by \textit{Arthur parameters}.

An Arthur parameter for $GL_n(F)$, in the definition of \cite{ggp-non}, stands for an admissible homomorphism
\[
\phi: W_F \times SL_2(\mathbb{C}) \times SL_2(\mathbb{C}) \to GL_n(\mathbb{C})\;,
\]
such that the image of $W_F$ is bounded, and the restriction of $\phi$ to each $SL_2(\mathbb{C})$ component is algebraic. Here $W_F$ stands for the Weil group of the field $F$.

In particular, an Arthur parameter $\phi$ is a completely reducible representation, which decomposes as
\[
\phi = \oplus_{i=1}^k \psi_i \otimes V_{a_i} \otimes V_{b_i}\;,
\]
where $\{\psi_i\}$ are irreducible representations of $W_F$ with bounded image, and $V_d$, for $d\in\mathbb{Z}_{>0}$, denotes the unique isomorphism class of a $d$-dimensional irreducible algebraic representation of $SL_2(\mathbb{C})$.

Given an Arthur parameter $\phi$, one can attach a \textit{$L$-parameter} to it. Consequently, by the established local Langlands reciprocity this $L$-parameter gives rise to an irreducible representation $\pi(\phi)$ of $GL_n(F)$. We will say that a representation which is constructed in this manner is of \textit{Arthur-type}.

\begin{conjecture}[\cite{ggp-non}]\label{main-conj}
Suppose that
\[
\phi_1=  \oplus_{i=1}^k \psi_i \otimes V_{a_i} \otimes V_{b_i}\;,\quad\phi_2=  \oplus_{i=1}^l \psi'_i \otimes V_{a'_i} \otimes V_{b'_i}\;,
\]
are two Arthur parameters for $GL_n(F), GL_{n-1}(F)$, respectively.

Then,
\[
\Hom_{GL_{n-1}(F)} \left(\pi(\phi_1)|_{GL_{n-1}(F)},\,\pi(\phi_2)\right) \neq 0
\]
holds, if and only if, there are disjoint partitions
\[
\{1,\ldots,k\} = I_1\cup I_2\cup I_3, \quad\{1,\ldots,l\} = J_1\cup J_2\cup J_3\;,
\]
and bijections $u:I_1\to J_2$, $d:I_2\to J_1$, which satisfy
\[
(a'_{u(i)}, b'_{u(i)}) = (a_i,b_i+1),\,\psi'_{u(i)}\cong \psi_i\; \forall i\in I_1,
\]
\[
(a'_{d(i)},b'_{d(i)}) = (a_i,b_i-1),\,\psi'_{d(i)}\cong \psi_i\; \forall i\in I_2,\;
\]
\[
b_i =1 \; \forall i\in I_3,\quad b'_j=1\;\forall j\in J_3\;.
\]
\end{conjecture}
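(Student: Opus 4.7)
The plan is to transfer the restriction problem, via the Bernstein decomposition together with the quantum affine Schur-Weyl duality of Chari-Pressley, into a module-theoretic question over a suitable quantum affine algebra $U_q(\hat{\gl}_m)$. Within a fixed Bernstein block of $GL_n(F)\times GL_{n-1}(F)$, each Arthur-type representation $\pi(\phi_j)$ will correspond to a specific simple module built by a ``Speh-style'' construction from Kirillov-Reshetikhin (KR) factors indexed by the summands $\psi_i\otimes V_{a_i}\otimes V_{b_i}$. Under this transfer, the restriction functor from $GL_n(F)$ to $GL_{n-1}(F)$ becomes tensoring with one fixed module on the quantum side, so that the non-vanishing of $\Hom_{GL_{n-1}(F)}(\pi(\phi_1)|_{GL_{n-1}(F)},\pi(\phi_2))$ translates into the question whether $\pi(\phi_2)$ appears as a simple quotient of an explicit tensor product of KR-type factors determined by $\pi(\phi_1)$.

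For the necessity direction (``only if''), I would compute the highest Bernstein-Zelevinsky derivative of $\pi(\phi_1)$, and more generally the relevant Jacquet-type functors, and match them against $\pi(\phi_2)$. On the quantum side this amounts to tracking which simple quotients can occur when a tensor product of KR-modules is modified by a single ``size-one'' shift in one of its factors. Applying the Lapid-M\'{i}nguez criterion to the ordered products of the corresponding segments, together with multiplicity-one from \cite{aiz-gur2}, forces any admissible $\phi_2$ to be of the parametric form predicted: every summand of $\phi_2$ either matches a summand of $\phi_1$ unchanged, or is obtained from one by shifting the $b$-index up or down by one, while unmatched summands on either side must satisfy $b=1$; this is exactly the data $(I_1,I_2,I_3;J_1,J_2,J_3)$ with the bijections $u,d$.

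For the sufficiency direction (``if''), given such partitions and bijections, I would construct an auxiliary parabolic induction whose cosocle simultaneously realizes $\pi(\phi_2)$ and the relevant quotient of $\pi(\phi_1)|_{GL_{n-1}(F)}$. The cyclicity theorem of Hernandez, applied to an appropriately ordered tensor product of KR-modules built from the combined data of $\phi_1$ and $\phi_2$, produces a unique simple quotient. The Lapid-M\'{i}nguez irreducibility criterion then identifies this quotient with $\pi(\phi_2)$, while Frobenius reciprocity on the $p$-adic side yields the required non-zero intertwining map. The freedom provided by $I_3\cup J_3$ (the $b=1$ factors) is precisely what allows the ordering hypotheses of Hernandez's theorem to be arranged without obstruction.

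The principal obstacle lies in the sufficiency direction outside the generic range. Generic representations admit rank-one intertwining arguments and irreducibility results that make both sides of the Schur-Weyl transfer essentially transparent; Arthur-type representations, by contrast, carry intricate socle filtrations, and inductions of Speh-type pieces may be reducible with several layers. The delicate point is to show that any admissible combinatorial datum $(I_1,I_2,I_3;J_1,J_2,J_3)$ can be lifted to an ordering of KR-factors that simultaneously meets the positivity hypothesis of Hernandez's cyclicity theorem and the non-linkage condition of Lapid-M\'{i}nguez, thereby ruling out ``phantom'' intermediate quotients that would otherwise prevent the predicted $\pi(\phi_2)$ from actually appearing as a cosocle constituent. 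Handling this matching uniformly across all Arthur data, rather than only when one of $\pi(\phi_1),\pi(\phi_2)$ is generic, is where the interplay between the two sides of the duality must be pushed furthest.
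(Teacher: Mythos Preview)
The paper does \emph{not} prove the full conjecture: it establishes the ``only if'' direction in general (Theorem~\ref{thm-main}) and the ``if'' direction only when one of $\pi(\phi_1),\pi(\phi_2)$ is generic (Theorem~\ref{thm-conv}). Your proposal aims at both directions, so it must be compared against what the paper actually achieves.

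For necessity, your outline is in the right spirit but mislocates the role of the quantum duality. The restriction from $G_n$ to $G_{n-1}$ does \emph{not} become ``tensoring with one fixed module on the quantum side''; rather, the paper reduces via the Bernstein--Zelevinski filtration of $\pi_1|_{P_n}$ (Proposition~\ref{filt-frob}) to Hom-spaces between derivatives $\nu^{1/2}\pi_1^{(i+1)}$ and $^{(i)}\pi_2$. Only \emph{after} this reduction does the Schur--Weyl transfer enter, and it is used solely to show that ordered products of quasi-Speh representations have a unique irreducible quotient (Proposition~\ref{main-prop}, via Hernandez's cyclicity Theorem~\ref{hern-thm}); this, combined with Lemma~\ref{mult-techlem} and the multisegment combinatorics of Corollary~\ref{cor-main}, forces the GGP-position conditions. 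Multiplicity-one from \cite{aiz-gur2} is not used here.

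For sufficiency, your plan has a genuine gap that you partly acknowledge. Even granting that Hernandez cyclicity plus Lapid--M\'{i}nguez lets you exhibit a non-zero $\Hom$ at some level $V_i/V_{i-1}$ of the BZ filtration (this is the content of Proposition~\ref{indeed-quo}, which the paper proves in full generality), you still need to lift that map through the filtration to a map out of $\pi_1|_{G_{n-1}}$. Frobenius reciprocity alone does not do this: one needs the vanishing of $\Ext^1_{G_{n-1}}(V_j/V_{j-1},\pi_2)$ for all $j<i$ (condition~\eqref{eqfilt}). The paper establishes this $\Ext^1$ vanishing by a central-character/real-part positivity argument (proof of Theorem~\ref{thm-conv}) that works only when one side is generic. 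Your proposal offers no mechanism to control these $\Ext^1$ obstructions in the non-generic case, and indeed your closing paragraph concedes exactly this difficulty; that obstacle is not one of arranging orderings for Hernandez's hypothesis, but of ruling out extensions in the BZ filtration, which is a separate problem the paper leaves open.
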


%The manuscript  itself goes on with supplying proofs of the above conjecture for some basic cases.

The results of this article are as follows:
\begin{itemize}
  \item Theorem \ref{thm-main}, which provides a full proof of one direction of Conjecture \ref{main-conj} through the use of tools from the representation theory of quantum affine algebras.
  \item Theorem \ref{thm-main-ext}, which extends the conjecture's statement to the class of all unitarizable representations.
  \item Theorem \ref{thm-conv}, which settles some families of cases of the converse direction of the conjecture.
\end{itemize}
Combining these results, we settle both directions for the case when at least one of $\pi(\phi_1), \pi(\phi_2)$ is generic.

One motivation for the formulation of Conjecture \ref{main-conj} were the works of Clozel \cite{cloz}, Venkatesh \cite{venk-bs} and Lapid-Rogawski \cite{lapid-rog} in the setting of \textit{unitary} representations. They studied a restriction problem in the sense of direct integral decomposition, and showed that the Burger-Sarnak principle for automorphic representations implies some necessary combinatorial conditions to occur in a restriction.

Even though there is no immediate relation between the smooth and unitary problems, our Theorem \ref{thm-main} is consistent with \cite[Proposition 2(1)]{venk-bs} in a certain sense. Namely, the cases where $I_1 = J_2=\emptyset$ hold in the statement of Conjecture \ref{main-conj} produce a refinement of the $SL(2)$-type condition required by the mentioned proposition from \cite{venk-bs}. Further discussion of this relation can be found in \cite{ggp-non}.

%Let us sketch the methods used in proving the main theorems.
\subsection{Outline of proof}

When attempting to tackle Conjecture \ref{main-conj} with standard techniques in hand, it appears unavoidable to encounter the intricacies of the structure of the Bernstein-Zelevinski product. As explain below, we are able to isolate the required information on such products in the form of Theorem \ref{thm:intro2}.

We next prove Theorem \ref{thm:intro2} by transferring it into other Lie-theoretic Abelian categories. Namely, we move into the representation theories of affine Hecke algebras and quantum affine algebras of type $A$.

Let us discuss the steps of the proof in more detail.

We first make use of the classical Bernstein-Zelevinski filtration for the space of $\pi|_{GL_{n-1}}$. This special feature of general linear groups allows for a translation (Proposition \ref{filt-frob}) of the restriction problem into questions on spaces of the form
\begin{equation}\label{intro-eq}
\Hom_{GL_{n-i}(F)} (\nu^{1/2}\otimes\pi^{(i)},\,^{(i-1)}\sigma),\;
%\quad \Ext^1_{GL_{n-i}(F)} (\nu^{1/2}\otimes\pi^{(i)},\,^{(i-1)}\sigma)\;,
\end{equation}
where $\nu^{1/2}$ is a certain character, and $\pi \mapsto \pi^{(i)}$ and $\pi\mapsto \,^{(i)}\pi$ are the Bernstein-Zelevinski derivative functors, which attach finite-length representations of a smaller rank group to a given irreducible representation.

Thus, we are left with questions on morphism spaces inside a category of finite-length representations. Moreover, derivatives of Arthur-type representations are built out of (Bernstein-Zelevinski) products of derivatives of \textit{Speh} representations. We call these derived Speh representations (which happen to be irreducible) \textit{quasi-Speh} representations.

The reasoning portrayed thus far was employed already in \cite{ggp-non} to tackle the problem and to prove some basic cases of Conjecture \ref{main-conj} (such as when $\pi(\phi_1),\pi(\phi_2)$ are Speh representations themselves).

%The basic cases treated in \cite{ggp-non} had the special feature that the relevant products of derivatives were all irreducible representations. In general,

Yet, the spaces \eqref{intro-eq} for the general case of Arthur-type representations were discovered to be substantially more intricate: Products of quasi-Speh representations are often reducible. The sub-representation structure of such Bernstein-Zelevinski products is often a highly non-trivial issue, as evidenced by a batch of recent works (\cite{LM2, LM3, tadic-speh, me-quantum}).

Such difficulties are manifested in our problem through the following key statement, on which the resolution of the first direction of Conjecture \ref{main-conj} (Theorem \ref{thm-main}) is highly dependent.

\begin{theorem}\label{thm:intro2}
  (restatement of Proposition \ref{main-prop})
For any choice of quasi-Speh representations $\pi_1,\ldots,\pi_k$, there is an ordering $\omega$ (permutation of $\{1,\ldots,k\}$), for which the Bernstein-Zelevinski product representation
\[
\pi_{\omega(1)}\times\cdots\times \pi_{\omega(k)}
\]
has a unique irreducible quotient, whose Langlands parameter is given as the sum of Langlands parameters of $\pi_1,\ldots,\pi_k$.
\end{theorem}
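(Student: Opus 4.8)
The plan is to reduce, via the Bernstein decomposition, to a single cuspidal line, and then to import the required phenomenon from the representation theory of quantum affine algebras of type $A$. For the reduction, recall that Bernstein--Zelevinski products of irreducible representations supported on distinct cuspidal lines are irreducible and commute; hence it suffices to prove the statement when $\pi_1,\dots,\pi_k$ are all attached to unramified twists of a single supercuspidal $\rho$ — the general case then follows by grouping the $\pi_i$ according to their cuspidal support, ordering each group separately, and interleaving the groups arbitrarily. Fixing such a line, I would transport the problem through the quantum affine Schur--Weyl equivalence (Chari--Pressley, in the form used in \cite{me-quantum}) into the category of finite-dimensional modules over a quantum affine algebra of type $A$. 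Under this dictionary a product $\pi_{\omega(1)}\times\cdots\times\pi_{\omega(k)}$ corresponds to a tensor product $M_{\omega(1)}\otimes\cdots\otimes M_{\omega(k)}$, an irreducible quotient corresponds to a simple quotient, the property of having a \emph{unique} irreducible quotient corresponds to the tensor product being \emph{cyclic} (generated by the tensor of $\ell$-highest weight vectors), and ``the sum of Langlands parameters of $\pi_1,\dots,\pi_k$'' matches ``the product of the Drinfeld polynomials of $M_1,\dots,M_k$'', equivalently the union of the underlying multisegments.

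With the problem thus restated, the next step is to use the concrete description of quasi-Speh representations. Being derivatives of Speh representations (and irreducible), their multisegments are explicit and highly structured, and so their quantum-side images $M_i$ are explicit simple modules — close relatives of Kirillov--Reshetikhin modules — whose $q$-characters and $\ell$-weights are completely understood. I would then apply a cyclicity criterion of Hernandez: for simple modules with sufficiently explicit $\ell$-weights there is a reordering $\omega$ for which $M_{\omega(1)}\otimes\cdots\otimes M_{\omega(k)}$ is cyclic, the admissible orderings being governed by the relative position of the spectral parameters (roughly, $M_i$ may precede $M_j$ when the normalized $R$-matrix $M_i\otimes M_j\to M_j\otimes M_i$ is regular). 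Once cyclicity is in hand, the tensor product has a unique simple quotient, and $\ell$-highest weight theory identifies it as the simple module whose Drinfeld polynomial is the product of those of the $M_i$. Transporting back, $\pi_{\omega(1)}\times\cdots\times\pi_{\omega(k)}$ has a unique irreducible quotient, with multisegment the union $\mathfrak{m}_1\cup\cdots\cup\mathfrak{m}_k$ of the multisegments of the $\pi_i$; since the Langlands parameter of $L(\mathfrak{m})$ is additive over the segments comprising $\mathfrak{m}$, this quotient has parameter $\sum_i\phi_{\pi_i}$, as required.

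The step I expect to be the main obstacle is verifying that Hernandez's criterion actually applies to the particular family of modules arising from quasi-Speh representations: one must show that the binary relation ``$M_i$ may be placed before $M_j$'' admits a linear extension on the whole collection (equivalently, that it is acyclic), rather than merely controlling isolated pairs. This is precisely where the Lapid--M\'{i}nguez criterion enters, as it gives a decidable combinatorial description — on the $p$-adic side, and transportable to the quantum setting — of when a product of two such representations is irreducible, and more generally of the shape of its cosocle. Accordingly I would organize the final step as an induction on $k$: invoke Lapid--M\'{i}nguez to locate some $\pi_j$ that may legitimately be placed first, combine it with an ordering of the remaining $\pi_i$ supplied by the inductive hypothesis, and check along the way that passing $\pi_j$ to the front does not enlarge the cosocle. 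A secondary technical point, which I would dispatch by routine bookkeeping with the Zelevinsky--Langlands dictionary, is the compatibility of the three descriptions of the ``top'' object: product of Drinfeld polynomials, union of multisegments, and sum of Langlands parameters.
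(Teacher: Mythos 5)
Your architecture coincides with the paper's: reduce to a single cuspidal line, pass to affine Hecke algebras and through the Chari--Pressley duality functors, invoke Hernandez's theorem that pairwise cyclicity implies cyclicity of the full tensor product (Theorem \ref{hern-thm}, read through Proposition \ref{cyclic} and Corollary \ref{cor-generalhern}), and settle the pairwise case with the Lapid--M\'inguez criterion. The genuine gap is that you never produce the ordering, and the pairwise statement for that ordering is the combinatorial heart of the argument, not a routine invocation. The paper (Proposition \ref{replace}) defines an explicit \emph{total} preorder on quasi-Speh representations on a fixed line --- $\pi_\rho^{a_1,b_1,c_1}\preceq\pi_\rho^{a_2,b_2,c_2}$ iff $a_1+b_1<a_2+b_2$, or $a_1+b_1=a_2+b_2$ and $a_1\le a_2$ --- and proves that for \emph{any} pair with $\pi_2\preceq\pi_1$ the product $\pi_1\times\pi_2$ has $L(\gotM_1+\gotM_2)$ as its unique irreducible quotient. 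Because the preorder is total, arranging the factors in decreasing order makes every ordered pair satisfy the hypothesis of Theorem \ref{hern-thm}; no acyclicity check, no induction on $k$, and no verification that ``passing $\pi_j$ to the front does not enlarge the cosocle'' is needed --- once all ordered pairs are cyclic, Hernandez's theorem does all the work for $k\ge 3$.

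The pairwise verification is also not a direct application of an irreducibility or $R$-matrix regularity criterion: the relevant products $\pi_1\times\pi_2$ are typically \emph{reducible}, and what must be shown is that $L(\gotM_1+\gotM_2)$ nevertheless appears as the cosocle. The paper does this by constructing a matching function for the bipartite graph of Theorem \ref{lap-min} by hand: it starts from the matching that exists for the associated genuine Speh representations $\pi_\rho^{a_i,b_i}$ (whose product is irreducible because both factors are unitarizable) and repairs it on the indices affected by truncating the top segment, using the inequalities encoded in $\preceq$ at several points. This explicit combinatorial step --- the choice of preorder together with the construction of the matching --- is what your proposal flags as ``the main obstacle'' but does not supply, and without it the argument is incomplete.
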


We show (Proposition \ref{replace}) that the case of $k=2$ in Theorem \ref{thm:intro2} follows from the recent work of Lapid-M\'{i}nguez \cite{LM2} on behavior of products of representations in the \textit{ladder} class.

%In the general case, one needs to obtain the information of which quotient and sub representations may morphisms in \eqref{intro-eq} factor through.
As a consequence, we are now left with the problem of whether Theorem \ref{thm:intro2} can be settled by looking on products of pairs of representations. Namely, we prove the following general phenomenon.

\begin{theorem}\label{thm:intro3}
Suppose that $\sigma_1,\ldots,\sigma_k$ are irreducible smooth representations of general linear groups, such that for all $1\leq i < j \leq k$, the product representation $\sigma_i\times \sigma_j$ has a unique irreducible quotient whose Langlands parameter is given as the sum of Langlands parameters of $\sigma_i$ and $\sigma_j$.

Then, the product representation
\[
\sigma_1 \times \cdots \times \sigma_k
\]
has a unique irreducible quotient, whose Langlands parameter is given as the sum of Langlands parameters of $\sigma_1,\ldots,\sigma_k$.
\end{theorem}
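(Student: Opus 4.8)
The plan is to transport the statement, via the Bernstein decomposition together with quantum affine Schur--Weyl duality, into a cyclicity statement about tensor products of modules over a quantum affine algebra of type $A$, and to invoke there the theorem of Hernandez on cyclic modules. A first routine reduction disposes of representations spread over several cuspidal lines: an arbitrary irreducible $\sigma_i$ factors, uniquely up to isomorphism, as an irreducible product $\sigma_i\cong\sigma_i^{(1)}\times\cdots\times\sigma_i^{(r_i)}$ of irreducibles supported (in the sense of supercuspidal support) on distinct $\Z$-lines $\{\rho\nu^m:m\in\Z\}$; representations on distinct lines have irreducible product, and $\sigma_1\times\cdots\times\sigma_k$ correspondingly factors as the product over lines of the line-by-line sub-products. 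Since the formation of the cosocle, the Langlands data, and their additivity all respect this factorization, the hypothesis descends to each line and the conclusion is assembled from the per-line conclusions; so one may assume all $\sigma_i$ lie in a single Bernstein block.

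Within that block I would use the dictionary (as set up in the preceding sections): the block of $\bigsqcup_m\irr GL_m(F)$ attached to the fixed line is equivalent, as an abelian category carrying its product functor, to a category of finite-dimensional modules over a quantum affine algebra $U_q(\widehat{\mathfrak{gl}}_N)$ of type $A$, with $N$ taken large enough to accommodate $\sigma_1,\ldots,\sigma_k$ and the total rank of $\sigma_1\times\cdots\times\sigma_k$ at once; under this equivalence the Bernstein--Zelevinski product goes to the tensor product in the same order, simple objects go to simple $\ell$-highest-weight modules $L(\boldsymbol\pi)$ indexed by dominant monomials, the standard (Langlands) modules go to the ordered tensor products of the evaluation modules attached to the constituent segments, and ``Langlands parameter equals the sum'' becomes ``the dominant monomial of the cosocle is the product $\prod_i\boldsymbol\pi_i$ of those of the factors.'' The key elementary observation is then that, for simple $\ell$-highest-weight modules $M_i,M_j$ with $\ell$-highest-weight vectors $v_i,v_j$, the property ``$M_i\otimes M_j$ has simple cosocle with dominant monomial $\boldsymbol\pi_i\boldsymbol\pi_j$'' is \emph{equivalent} to ``$M_i\otimes M_j$ is $\ell$-highest weight, i.e.\ generated by $v_i\otimes v_j$'': in one direction, a quotient of a cyclic module is cyclic, so its unique simple quotient is $\ell$-highest weight of the expected monomial; in the other, $\boldsymbol\pi_i\boldsymbol\pi_j$ being the maximal $\ell$-weight of the tensor product, and occurring there with multiplicity one, forces the submodule generated by $v_i\otimes v_j$ to surject onto the simple cosocle and hence to be everything. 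Thus the hypothesis of the theorem translates exactly into: $M_i\otimes M_j$ is $\ell$-highest weight for all $1\le i<j\le k$.

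At this point the theorem of Hernandez applies: if each $M_i\otimes M_j$ with $i<j$ is $\ell$-highest weight, then $M_1\otimes\cdots\otimes M_k$ is $\ell$-highest weight, i.e.\ generated by $v_1\otimes\cdots\otimes v_k$; by the equivalence displayed in the previous paragraph its cosocle is then the single simple module $L(\boldsymbol\pi_1\cdots\boldsymbol\pi_k)$. Transporting back through the category equivalence, $\sigma_1\times\cdots\times\sigma_k$ has a unique irreducible quotient, and its Langlands parameter is the sum of those of $\sigma_1,\ldots,\sigma_k$, which is the assertion. I expect the main difficulty to be bookkeeping rather than conceptual: pinning down the dictionary so that the \emph{ordered} product matches the ordered tensor product and so that the notions of standard module, cosocle, and additivity of parameters correspond on the nose, and arranging that a single $N$ serves for all representations simultaneously. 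The one genuinely hard input --- that pairwise cyclicity forces joint cyclicity --- is precisely Hernandez's result, used here as a black box.
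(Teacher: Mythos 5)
Your proposal follows essentially the same route as the paper: reduce to a single cuspidal line via the Bernstein decomposition and disjointness of supercuspidal supports, transfer through the (affine Hecke algebra and) quantum affine Schur--Weyl duality dictionary, identify the hypothesis with pairwise cyclicity of the tensor product via the weight-multiplicity-one argument (the paper's Proposition \ref{cyclic}), and invoke Hernandez's cyclicity theorem as a black box. The argument is correct and matches the paper's proof, including the bookkeeping points you flag.
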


The mechanism of \textit{Bernstein decomposition} (Section \ref{sect-equiv-heck}) presents the category of smooth representations of $GL_n(F)$ as a product of smaller Abelian categories called Bernstein blocks. It is sufficient to prove Theorem \ref{thm:intro3} for each such block.

It is well known that each block can be described as modules over a complex algebra. The identification of these algebras with \textit{affine Hecke algebras} (for $GL_n$) was done in \cite{bk-ss, bk-book} through type theory, and independently in \cite{heier-cat} through a more explicit approach. Thus, it is enough to solve the problem of Theorem \ref{thm:intro3} for modules over affine Hecke algebras (stated as Theorem \ref{transl}).

Next, we are able to pass from modules over affine Hecke algebras to modules over quantum affine algebras by using the \textit{quantum affine Schur-Weyl duality} functors (Section \ref{sect-sw-dual}), developed by Chari-Pressley \cite{cp-duality}. Since all functors involved turn out to be monoidal, in a suitable sense, we are finally left with a statement regarding the simple quotients of tensor products of quantum affine modules.

%morphism spaces between the quantum affine analogs of products of quasi-Speh representations.

The advantage of posing our problem in a language of quantum groups lies in a recent result of Hernandez \cite{hernan-cyc} dealing with \textit{cyclic} modules. These are not necessarily simple modules, that are generated by their highest weight vector. It states that a product $V_1\otimes\ldots \otimes V_k$ of simple modules $V_1,\ldots, V_k$ is cyclic, if all the products $V_i\otimes V_j\;,1\leq i < j\leq k,$ are cyclic.

When transferring the notion of a cyclic products back to the $p$-adic setting through our sequence of functors, we see that the theorem of Hernandez gives the precise statement of Theorem \ref{thm:intro3}.

We comment that in a work under preparation of Alberto M\'{i}nguez together with the author, we were able to extract to the key arguments of \cite{hernan-cyc} and use them to construct an alternative ``purely $p$-adic" proof of Theorem \ref{thm:intro3}.

%When transferring the notion of a cyclic products back to the $p$-adic setting through our sequence of functors, we end up (Propositions \ref{cyclic} and \ref{irr-corr}) with a notion of a product $\pi_1\times\cdots\times \pi_k$ of irreducible representations that has a unique irreducible quotient whose Zelevinski parameter is the sum of the Zelevinski parameters of $\pi_1,\ldots,\pi_k$.
%This is precisely the notion that was studied by Lapid-Minguez in \cite{LM2} for products of pairs of representations of the \textit{ladder} class.

%Since quasi-Speh representations belong to the ladder class, we are able to order them (Proposition \ref{replace}) in such way that a combinatorial criterion from \cite{LM2} would imply that $\pi_1\times \pi_2$ is ``cyclic" whenever $\pi_2\preceq \pi_1$. Proposition \ref{main-prop} subsequently applies the result of Hernandez to claim essentially that the derivatives in \eqref{intro-eq} for our cases of interest can be decomposed into representations with expected unique irreducible quotients/subs. Such claim untangles the main difficulty behind one direction of Conjecture \ref{main-conj}.

\subsection{Paper structure}
Section \ref{sect-backg} surveys the basic tools needed to study the smooth representation theory of $p$-adic $GL_n$, together with some necessary lemmas. In particular, we recall the Zelevinski multisegment parametrization, which is an essential tool in our work. We make note of the basic Proposition \ref{disjsupp}, whose statement has not appeared previously in the literature to the best of our knowledge. Section \ref{sect-lm} recovers the necessary results from \cite{LM2}.

Section \ref{sect-hecke} portrays the categorical passage from representations of $GL_n(F)$ into those of affine Hecke algebras. Proposition \ref{equiv-block} delves into the resulting correspondence between irreducible representations. The proof of the key Theorem \ref{transl} is delayed to Section \ref{sect-qa}.

Section \ref{sect-work} contains the gist of the combinatorial work in the class of quasi-Speh representations and their products, which is necessary for the proof of Conjecture \ref{main-conj}.

In Section \ref{sect-main} we prove the main theorems discussed above.

Finally, Section \ref{sect-qa} surveys the necessary ingredients from the representations theory of $U_q\left(\hat{\frak{sl}}_N\right)$ with the aim of proving Theorem \ref{transl}, which is shown to be essentially a translation of the main result of \cite{hernan-cyc}.

\subsection{Acknowledgements}
I would like to thank Wee Teck Gan, for sharing this problem with me and whose insights and optimism are an invaluable guide. Thanks are due to Erez Lapid for the continuing encouragement and useful conversations, to David Hernandez for sharing his results and expertise, to Dipendra Prasad for sharing his views on the problem and to Kei Yuen Chan and Gordan Savin for sharing their preprint and their views on similar themes.

This work was begun while the author was a research fellow at the National University of Singapore, supported by Wee Teck Gan’s MOE Tier 2 grant 146-000-233-112.

\section{Background on representation theory of $p$-adic $GL_n$}\label{sect-backg}

Let $F$ be a $p$-adic field. Unless explicitly stated, $F$ will be fixed and omitted from our notation. We are interested in the representation theory of the groups $G_n := GL_n(F)$, for all $n\geq1$.

For a given $n$, let $\alpha = (n_1, \ldots, n_r)$ be a composition of $n$. We denote by $M_\alpha$ the subgroup of $G_n$ isomorphic to $G_{n_1} \times \cdots \times G_{n_r}$ consisting of matrices which are diagonal by blocks of size $n_1, \ldots, n_r$ and by $P_\alpha$ the subgroup of $G_n$ generated by $M_\alpha$ and the upper
unitriangular matrices. A standard parabolic subgroup of $G_n$ is a subgroup of the form $P_\alpha$ and its standard Levi factor is $M_\alpha$.

For a $p$-adic group $G$, let $\mathfrak{S}(G)$ be the category
of smooth complex representations of $G$, and $\mathfrak{R}(G)$ be the sub-category of objects of finite length. Denote by $\irr(G)$ the set of equivalence classes
of irreducible objects in $\mathfrak{R}(G)$. Denote by $\mathcal{C}(G)\subset \irr(G)$ the subset of irreducible supercuspidal representations.

We write $\mathbf{i}_\alpha: \mathfrak{R}(M_\alpha)\to \mathfrak{R}(G_n)$ for the parabolic induction functor associated to $P_\alpha$.

For $\pi_i\in \mathfrak{R}(G_{n_i})$, $i=1,\ldots,r$, we write
\[
\pi_1\times\cdots\times \pi_r := \mathbf{i}_{(n_1,\ldots,n_r)}(\pi_1\otimes\cdots\otimes \pi_r)\in \mathfrak{R}(G_{n_1+\ldots+n_r}).
\]

We also write $\irr = \cup_{m\geq0} \irr(G_m)$ and $\mathcal{C} = \cup_{m\geq1} \mathcal{C}(G_m)$.

For any $n$, let $\nu^s= |\det|^s_F,\;s\in \mathbb{C}$ denote the family of one-dimensional representations of $G_n$, where $|\cdot|_F$ is the absolute value of $F$. For $\pi\in \mathfrak{R}(G_n)$, we write $\nu^s\pi=\pi\nu^s := \pi\otimes \nu^s\in \mathfrak{R}(G_n)$.

The map $s\mapsto \nu^s$ is a group homomorphism from $\mathbb{C}$ to the group of characters of $G_n$, whose kernel is $\frac{2\pi i}{\log q_F}\mathbb{Z}$, where $q_F$ is the residue characteristic of $F$.

The group of (unramified) characters $\{\nu^s\,:\, s\in\mathbb{C}\}$ of $G_n$ acts on $\mathcal{C}(G_n)$ by $\rho\mapsto \rho\nu^s$. For $\rho\in\mathcal{C}(G_n)$, we write $o(\rho)$ for the (finite) order of the stabilizer of $\rho$ for that action.

For $\pi\in \irr$, the central character $\chi_\pi$ of $\pi$ is a representation of $G_1$. Hence, $|\chi_\pi| = \nu^{s_\pi}$, for a number $s_\pi\in \mathbb{R}$. We will write $\Re(\pi) = s_\pi\in \mathbb{R}$.

Given a set $X$, we write $\mathbb{N}(X)$ for the commutative monoid of maps from $X$ to $\mathbb{N}= \mathbb{Z}_{\geq0}$ with finite support. We will sometimes treat an element $A\in \mathbb{N}(X)$ as a finite subset of $X$, by writing $x\in A$ for a given $x\in X$, in case $A(x)>0$.

There is a natural embedding $X\to \mathbb{N}(X)$ which sends an element to its indicator function. Thus, we will often treat elements of $X$ as elements in $\mathbb{N}(X)$.

\subsection{Multisegments}\label{sect-multis}

The elements of $\irr$ are classified by \textit{multisegments} in the following manner.

A \textit{segment} $\Delta= [a,b]_\rho$ is a formal object defined by a triple $([\rho],a,b)$, where $\rho\in \mathcal{C}$ and $a\leq b$ are two integers, up to the equivalence $[a,b]_\rho=[a',b']_{\rho'}$, when $\rho\nu^a\cong \rho'\nu^{a'}$ and $\rho\nu^b \cong \rho'\nu^{b'}$.

It is also useful to refer to the empty segment given in the form $[a,a-1]_\rho$, for any $\rho\in \mathcal{C}$ and integer $a$.

For a segment $\Delta=[a,b]_\rho$, we will write $b_\rho(\Delta)=a$ and $e_\rho(\Delta)=b$.

A segment $\Delta_1$ is said to precede a segment $\Delta_2$, if $\Delta_1 = [a_1,b_1]_{\rho} ,\;\Delta_2= [a_2,b_2]_{\rho}$ and $a_1\leq a_2-1\leq b_1<b_2$. We will write $\Delta_1 \prec \Delta_2$ in this case.

Let $\seg$ denote the collection of all segments. Elements of $\multi: = \mathbb{N}(\seg)$ are called multisegments.

The \textit{Zelevinski classification} \cite{Zel} defines a bijection
\[
Z: \multi \to \irr\;.
\]

Let us briefly recall the process that constructs $Z$. First, $Z(\Delta)$ is defined directly for every segment $\Delta$. It is easily verified that each $\gotM\in \multi$ can be written additively as $\gotM = \Delta_1 +\ldots + \Delta_k$, where $\Delta_i$ are segments whose order is chosen so that $\Delta_i\nprec \Delta_j$, for any $i<j$. Next, the \textit{standard} representation
\[
\zeta(\gotM)= Z(\Delta_1)\times\cdots \times Z(\Delta_k)
\]
is constructed. While $\zeta(\gotM)$ may be reducible, its unique irreducible subrepresentation is defined to be $Z(\gotM)$.

We say that $\gotN \leq \gotM$, if the isomorphism class of $Z(\gotN)$ appears as a subquotient in $\zeta(\gotM)$. It is shown in \cite{Zel} that this is a partial order on $\multi$ (the Zelevinski order).

Similarly, the \textit{Langlands (quotient) classification}, which describes irreducible representations of any reductive $p$-adic group, can be stated for the case of the groups $\{G_n\}_{n=1}^\infty$, as another\footnote{
In fact, the Zelevinski classification can also be obtained as a variation of the Langlands classification, when considering the anti-tempered spectrum of the group, rather than the tempered spectrum.} bijection
\[
L: \multi \to \irr\;.
\]

The relation between both bijections $Z$ and $L$ is further discussed in Section \ref{sect-conseq}.

For a segment $\Delta = [a,b]_\rho$ and $s\in\mathbb{C}$, we write $\Delta\nu^s :=[a,b]_{\rho\nu^s}$. We naturally extend this to an operation $\gotM \mapsto \gotM\nu^s$ on $\multi$. It is easy to check that, $Z(\gotM\nu^s) = Z(\gotM)\nu^s$ and $L(\gotM\nu^s) = L(\gotM)\nu^s$ hold.

Let $\pi_1= Z(\gotM_1),\ldots,  \pi_t= Z(\gotM_t) \in \irr$ be given. Then $\pi_1\times\cdots\times \pi_t$ contains $Z(\gotM_1+\ldots+\gotM_t)$ with multiplicity one in its composition series (e.g. \cite[Proposition 2.5(5)]{LM2}). In particular, when $\pi_1\times\cdots\times \pi_t$ is irreducible, we must have $\pi_1\times\cdots\times\pi_t\cong Z(\gotM_1+\ldots+\gotM_t)$.

The analogous statements remain true when replacing the $Z$ bijection with $L$.

\begin{lemma}\label{mult-techlem}
Let $\pi_1 = L(\gotM_1),\ldots, \pi_k = L(\gotM_k)$ and $\sigma_1 = L(\gotN_1),\ldots,\sigma_l = L(\gotN_l)$ be irreducible representations, for which $L(\gotM_1+\ldots + \gotM_k)$ is the unique irreducible quotient of $\pi_1\times\cdots\times \pi_k$ and $L(\gotN_1+\ldots + \gotN_l)$ is the unique irreducible subrepresentation of $\sigma_1\times\cdots\times \sigma_l$.

Suppose that
\[
\Hom (\pi_1\times\cdots\times \pi_k, \sigma_1\times\cdots\times \sigma_l)\neq \{0\}\;.
\]
Then, $\gotM_1+\ldots + \gotM_k = \gotN_1+\ldots + \gotN_l$.

\end{lemma}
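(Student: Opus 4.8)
The plan is to extract from the hypothesis $\Hom(\pi_1\times\cdots\times\pi_k,\sigma_1\times\cdots\times\sigma_l)\neq\{0\}$ an equality of cuspidal supports, and then use the extremal multiplicity-one properties already recorded above to pin down the multisegments. First I would observe that a nonzero morphism between two finite-length representations forces them to share at least one irreducible constituent; in particular $\pi_1\times\cdots\times\pi_k$ and $\sigma_1\times\cdots\times\sigma_l$ have a common irreducible subquotient, hence (since cuspidal support is additive under $\times$ and constant on a composition series) the multiset cuspidal supports agree: $\supp(\gotM_1+\ldots+\gotM_k)=\supp(\gotN_1+\ldots+\gotN_l)$. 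This already shows the two sides live in the same Bernstein block and have the same total length datum, which is the coarse half of the claim.

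The sharp part comes from the Langlands/Zelevinski combinatorics. By hypothesis $L(\gotM_1+\ldots+\gotM_k)$ is the \emph{unique} irreducible quotient of the source, so the image of any nonzero map $f$ has $L(\gotM_1+\ldots+\gotM_k)$ as its unique irreducible quotient as well; in particular $L(\gotM_1+\ldots+\gotM_k)$ is a subquotient of $\sigma_1\times\cdots\times\sigma_l$. Dually, $L(\gotN_1+\ldots+\gotN_l)$ is the unique irreducible subrepresentation of the target, and hence it embeds into the image of $f$, so $L(\gotN_1+\ldots+\gotN_l)$ is a subquotient of $\pi_1\times\cdots\times\pi_k$. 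Now I would invoke the extremality of these two constituents in the Langlands order: $L(\gotN_1+\ldots+\gotN_l)$ being a subquotient of $\pi_1\times\cdots\times\pi_k$ forces $\gotN_1+\ldots+\gotN_l$ to be $\leq$ (in the appropriate order attached to $L$) the "top" multisegment of that standard-type module, which is $\gotM_1+\ldots+\gotM_k$; symmetrically $L(\gotM_1+\ldots+\gotM_k)$ being a subquotient of $\sigma_1\times\cdots\times\sigma_l$ gives the reverse inequality $\gotM_1+\ldots+\gotM_k\leq \gotN_1+\ldots+\gotN_l$. Antisymmetry of the order then yields $\gotM_1+\ldots+\gotM_k=\gotN_1+\ldots+\gotN_l$.

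To make the two inequalities rigorous I would spell out, in the $L$-version, the statement already quoted for $Z$: for irreducibles $\tau_i=L(\mathfrak{P}_i)$, the module $\tau_1\times\cdots\times\tau_t$ contains $L(\mathfrak{P}_1+\ldots+\mathfrak{P}_t)$ with multiplicity one, and every other irreducible subquotient $L(\mathfrak{P})$ satisfies $\mathfrak{P} \lneq \mathfrak{P}_1+\ldots+\mathfrak{P}_t$ — this is the Langlands analogue of the standard module being "headed" by its Langlands datum, and follows from the corresponding facts about standard modules $\zeta(\cdot)$ together with the duality between the two classifications discussed in Section~\ref{sect-conseq}. Applying this to $\tau_i=\pi_i$ and to $\tau_i=\sigma_i$ respectively gives exactly the two comparisons used above. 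The main obstacle I anticipate is purely bookkeeping: making sure the order in which the $\gotM_i$'s (resp.\ $\gotN_i$'s) are multiplied does not matter for the conclusion — which it does not, since the relevant facts concern only the composition series of the product and the total sum $\gotM_1+\ldots+\gotM_k$, both of which are independent of the ordering — and being careful that "unique irreducible quotient / subrepresentation" is used in the correct variance when passing through the nonzero map $f$.
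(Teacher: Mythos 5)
Your proposal is correct and follows essentially the same route as the paper's proof: pass to the image of a nonzero map, observe that it has $L(\gotM_1+\ldots+\gotM_k)$ as a quotient and $L(\gotN_1+\ldots+\gotN_l)$ as a subrepresentation, deduce that each of these is a subquotient of the other product, and conclude by the two-sided bound in the Langlands (Zelevinski-type) order together with antisymmetry. The opening paragraph on cuspidal supports is harmless but redundant, since the order-theoretic argument already gives the full equality.
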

\begin{proof}
Let $0\neq f$ be a homomorphism in $\Hom (\pi_1\times\cdots\times \pi_k, \sigma_1\times\cdots\times \sigma_l)$, and let $\kappa$ denote its image. By assumption $\kappa$ contains $L(\gotM_1+\ldots + \gotM_k)$ as a quotient, and $L(\gotN_1+\ldots + \gotN_l)$ as a subrepresentation. In particular, we see that $L(\gotM_1+\ldots + \gotM_k)$ is a subquotient of $\sigma_1\times\cdots\times \sigma_l$.

%Now, recall that $\multi$ is equipped with a natural partial order (Zelevinski order) with certain properties.
It is known that for every subquotient $L(\mathfrak{t})$ of $\sigma_1\times\cdots\times \sigma_l$, we have $\mathfrak{t}\leq \gotN_1+\ldots +\gotN_l$.

Hence, $\gotM_1+\ldots + \gotM_k \leq \gotN_1+\ldots + \gotN_l$. We claim analogously that $\gotN_1+\ldots + \gotN_l \leq \gotM_1+\ldots + \gotM_k$ and the statement follows.

\end{proof}

Given a supercuspidal representation $\rho\in \mathcal{C}$, let $\seg_\rho$ denote the collection of segments of the form $[a,b]_{\rho\nu^s}$, for some integers $a,b$ and $s\in \mathbb{C}$.

We also define the sub-monoid $\multi_\rho: = \mathbb{N}(\seg_\rho)\subset \multi$. When $\nu^0\in \irr(G_1)$ is the trivial representation (thought of as a supercuspidal representation), we write $\multi_0 = \multi_{\nu^0}$.

We will also let the field $F$ vary for this part of the discussion, and write $\seg^F_\rho$, $\multi^F_\rho$, $\multi^F_0$ for the corresponding objects, defined for an arbitrary $p$-adic field $F$.

Given $\rho_1\in \mathcal{C}(GL_{n_1}(F_1))$ and $\rho_2\in \mathcal{C}(GL_{n_2}(F_2))$ with $q_{F_1}^{o(\rho_1)}= q_{F_2}^{o(\rho_2)}$, we have a well-defined bijection
\[
\phi_{\rho_1,\rho_2}: \seg^{F_1}_{\rho_1} \to \seg^{F_2}_{\rho_2},\quad \phi([a,b]_{\rho_1\nu^s}) =  [a,b]_{\rho_2\nu^s}\;,
\]
which naturally extends to an isomorphism of monoids $\phi_{\rho_1,\rho_2}: \multi^{F_1}_{\rho_1} \to \multi^{F_2}_{\rho_2}$.

\subsection{Gelfand-Kazhdan involution}\label{sect-gk}

The outer automorphism $g\mapsto (g^t)^{-1}$ on $G_n$, gives an involutive auto-equivalence $\eta$ of the category $\mathfrak{R}(G_n)$. Let us write $\overline{\eta}$ for the composition of $\eta$ with the operation of taking the contragredient (smooth dual) representation.

The resulting involution $\overline{\eta}$ is a contragredient functor with satisfies the special property $\overline{\eta}(\pi)\cong \pi$, for all $\pi\in \irr$, as shown by a classical result of Gelfand-Kazhdan \cite{GK75}.

It is easy to check that $\overline{\eta}$ is compatible with the parabolic induction product, in the sense of
\[
\overline{\eta}(\pi_1\times\cdots\times \pi_t)\cong \overline{\eta}(\pi_t)\times \cdots\times \overline{\eta}(\pi_1)\;,
\]
for any $\pi_i\in \mathfrak{R}(G_{n_i})$, $i=1,\ldots,t$.

The existence of $\overline{\eta}$ also gives the following well-known corollary.
\begin{proposition}\label{GK}
For all $\pi_1,\ldots,\pi_t \in \irr$, the socle (maximal semisimple subrepresentation) of $\pi_1\times \cdots\times \pi_t$ is isomorphic to the co-socle (maximal semisimple quotient) of $\pi_t\times \cdots\times \pi_1$ .

In particular, when $\pi_1\times \cdots\times \pi_t$ is irreducible, it is isomorphic to $\pi_{w(1)}\times \cdots\times \pi_{w(t)}$, for any permutation $w$ on $\{1,\ldots,t\}$.

\end{proposition}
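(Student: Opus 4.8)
The plan is to deduce everything formally from the properties of the Gelfand--Kazhdan involution $\overline{\eta}$ recalled just above, with one elementary categorical lemma in between. Recall that $\overline{\eta}$, being the composition of the exact covariant auto-equivalence $\eta$ with the exact contravariant auto-equivalence ``contragredient'', is an \emph{exact contravariant auto-equivalence} of $\mathfrak{R}(G_n)$, that it is involutive ($\overline{\eta}^{2}\cong\mathrm{id}$), that it is additive and sends irreducibles to irreducibles and hence preserves semisimplicity, and that it fixes every member of $\irr$ up to isomorphism. The key observation is the following general fact: if $\Phi$ is an exact contravariant auto-equivalence of a finite-length abelian category with $\Phi^{2}\cong\mathrm{id}$, then $\Phi$ interchanges socle and cosocle, i.e. $\Phi(\soc X)$ is isomorphic to the cosocle of $\Phi(X)$ for every object $X$. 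I would prove this directly: applying $\Phi$ to the inclusion $\soc X\hookrightarrow X$ produces a surjection $\Phi(X)\twoheadrightarrow \Phi(\soc X)$ onto a semisimple object; conversely, for any semisimple quotient $S$ of $\Phi(X)$, applying $\Phi$ gives a semisimple subobject $\Phi(S)$ of $\Phi^{2}(X)\cong X$, hence $\Phi(S)\hookrightarrow \soc X$, and applying $\Phi$ once more yields a surjection $\Phi(\soc X)\twoheadrightarrow \Phi^{2}(S)\cong S$; thus $\Phi(\soc X)$ is a semisimple quotient of $\Phi(X)$ which surjects onto every semisimple quotient, i.e. it is the cosocle.

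Next I would specialize to $\Phi=\overline{\eta}$ and $X=\pi_t\times\cdots\times\pi_1$. By the displayed compatibility of $\overline{\eta}$ with the induction product together with $\overline{\eta}(\pi_i)\cong\pi_i$, we have $\overline{\eta}(\pi_t\times\cdots\times\pi_1)\cong\overline{\eta}(\pi_1)\times\cdots\times\overline{\eta}(\pi_t)\cong\pi_1\times\cdots\times\pi_t$. Feeding this into the lemma gives that the cosocle of $\pi_1\times\cdots\times\pi_t$ is isomorphic to $\overline{\eta}\bigl(\soc(\pi_t\times\cdots\times\pi_1)\bigr)$, and since the socle is semisimple and $\overline{\eta}$ fixes each irreducible constituent, this is isomorphic to $\soc(\pi_t\times\cdots\times\pi_1)$. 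Relabelling $\pi_i\leftrightarrow \pi_{t+1-i}$ yields precisely the asserted isomorphism between the socle of $\pi_1\times\cdots\times\pi_t$ and the cosocle of $\pi_t\times\cdots\times\pi_1$.

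For the final assertion, suppose $\pi_1\times\cdots\times\pi_t$ is irreducible, say equal to $\tau$. Here I would simply invoke the commutativity of parabolic induction at the level of the Grothendieck group (Bernstein--Zelevinski): for any permutation $w$ of $\{1,\ldots,t\}$ one has $[\pi_{w(1)}\times\cdots\times\pi_{w(t)}]=[\pi_1\times\cdots\times\pi_t]=[\tau]$ in the Grothendieck group, so $\pi_{w(1)}\times\cdots\times\pi_{w(t)}$ has $\tau$ as its unique composition factor, with multiplicity one, and is therefore irreducible and isomorphic to $\tau$. (Alternatively, this also drops out of the socle/cosocle statement for the specific permutation $w$ reversing the order, but the Grothendieck-group argument handles all $w$ uniformly.)

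There is no serious obstacle here; the statement is genuinely a formal corollary. The only two points requiring a little care are keeping the variances straight in the socle/cosocle interchange lemma — in particular using $\overline{\eta}^{2}\cong\mathrm{id}$ to pass semisimple quotients to semisimple subobjects and back — and making explicit the (standard) fact that the induction product is commutative on the Grothendieck group, which is what upgrades the order-reversal symmetry to symmetry under arbitrary permutations.
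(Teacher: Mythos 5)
Your proof is correct, and it follows essentially the same route the paper intends: the paper states this as a ``well-known corollary'' of the existence of $\overline{\eta}$ without writing out details, and your argument is exactly the standard formalization --- the contravariant involution $\overline{\eta}$ swaps socle and cosocle, fixes irreducibles, and reverses the product, while the ``in particular'' clause follows from commutativity of the induction product in the Grothendieck group.
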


\subsection{Supercuspidal support}

For every $\pi\in \irr$ there exist $\rho_1,\ldots,\rho_r \in\mathcal{C}$ for which $\pi$ is a sub-representation of $\rho_1\times\cdots\times \rho_r$. The notion of \textit{supercuspidal support} can then be defined as the multiset
\[
\supp(\pi) \in \mathbb{N}(\mathcal{C})\;,
\]
given by the tuple $(\rho_1,\ldots,\rho_r)$.

It is known that for any $\pi_1,\pi_2\in \irr$, the product $\pi_1\times \pi_2$ is irreducible, unless there is a supercuspidal representation $\rho\in \supp(\pi_1)$, so that either $\rho\nu^1\in \supp(\pi_2)$ or $\rho\nu^{-1}\in \supp(\pi_2)$.

For $\sigma\in \mathfrak{R}(G_n)$, we will write $\supp(\sigma) = I$, in case $\supp (\pi)= I$ holds, for all irreducible sub-quotients $\pi$ of $\sigma$.

Recall from the general theory of the Bernstein center, that any representation $\sigma\in \mathfrak{R}(G_n)$ splits uniquely to the form $\sigma = \oplus_I \sigma_I$, where the sum goes over distinct multisets $I\in \mathbb{N}(\mathcal{C})$, so that $\supp(\sigma_I) = I$.

In particular, for $\pi,\sigma\in \mathfrak{R}(G_n)$, we have $\Hom (\pi,\sigma) = \oplus_I \Hom (\pi_I,\sigma_I)$.

\begin{proposition}\label{disjsupp}
Suppose that $\pi_i, \sigma_i \in \mathfrak{R}(G_{n_i})$ are given for $i=1,\ldots,t$, such that $\supp(\pi_i) = \supp(\sigma_i) = I_i$, for pairwise disjoint multisets $I_1,\ldots,I_t\in \mathbb{N}(\mathcal{C})$.

Then, we have a natural identification

\[
\Hom( \pi_1\times\cdots\times \pi_t, \sigma_1\times\cdots\times \sigma_t) \cong \Hom(\pi_1,\sigma_1) \otimes\cdots\otimes \Hom (\pi_t ,\sigma_t) \;.
\]

\end{proposition}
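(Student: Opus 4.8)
The plan is to reduce the statement, by induction on $t$, to the case $t=2$, and then to analyze the two-factor case directly using the Bernstein decomposition by supercuspidal support together with the adjunction (Frobenius reciprocity / second adjointness) between parabolic induction and Jacquet functors.

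First I would handle $t=2$. Write $P = P_{(n_1,n_2)}$ with Levi $M = G_{n_1}\times G_{n_2}$. By second adjointness,
\[
\Hom_{G_{n_1+n_2}}(\pi_1\times \pi_2,\ \sigma_1\times\sigma_2) \cong \Hom_M\big(r_{P^-}(\pi_1\times\pi_2),\ \sigma_1\otimes\sigma_2\big),
\]
where $r_{P^-}$ is the Jacquet functor with respect to the opposite parabolic. The geometric lemma expresses $r_{P^-}(\pi_1\times \pi_2)$ via a filtration whose subquotients are indexed by double cosets $P\backslash G/P$, i.e.\ by the way one ``mixes'' the supercuspidal supports $I_1$ and $I_2$ across the two blocks. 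The key point is that because $I_1$ and $I_2$ are disjoint as multisets in $\mathbb{N}(\mathcal{C})$, and because $\sigma_1\otimes\sigma_2$ has supercuspidal support exactly $(I_1, I_2)$ on $M$, every subquotient of the geometric-lemma filtration that contributes a nonzero $\Hom$ into $\sigma_1\otimes\sigma_2$ must carry supercuspidal support $I_1$ on the first factor and $I_2$ on the second. Disjointness of $I_1$ and $I_2$ forces exactly one double coset to survive — the trivial one, corresponding to ``no mixing'' — whose associated subquotient is precisely $\pi_1\otimes\pi_2$. Hence $\Hom_M(r_{P^-}(\pi_1\times\pi_2),\sigma_1\otimes\sigma_2)\cong \Hom(\pi_1,\sigma_1)\otimes\Hom(\pi_2,\sigma_2)$; this is the $t=2$ case. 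One should be slightly careful that the filtration vanishing argument needs the support decomposition $\sigma = \oplus_I \sigma_I$ recalled just before the proposition, applied on the Levi $M$, so that support-mismatched subquotients genuinely contribute zero to $\Hom$.

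Next I would run the induction. Assume the statement for $t-1$ factors. Group the product as $(\pi_1\times\cdots\times\pi_{t-1})\times\pi_t$; the representation $\pi_1\times\cdots\times\pi_{t-1}$ has supercuspidal support $I_1+\cdots+I_{t-1}$, which is disjoint from $I_t$, and likewise on the $\sigma$ side. Although $\pi_1\times\cdots\times\pi_{t-1}$ need not be irreducible, the $t=2$ argument above used only the disjointness of supercuspidal supports and the splitting of $\mathfrak{R}(M)$ along supports, not irreducibility; so it applies verbatim to give
\[
\Hom\big((\pi_1\times\cdots\times\pi_{t-1})\times\pi_t,\ (\sigma_1\times\cdots\times\sigma_{t-1})\times\sigma_t\big)\cong \Hom(\pi_1\times\cdots\times\pi_{t-1},\ \sigma_1\times\cdots\times\sigma_{t-1})\otimes \Hom(\pi_t,\sigma_t),
\]
and the induction hypothesis rewrites the first tensor factor as $\Hom(\pi_1,\sigma_1)\otimes\cdots\otimes\Hom(\pi_{t-1},\sigma_{t-1})$. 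Naturality of all the isomorphisms (Bernstein splitting, second adjointness, geometric lemma) is routine to track, giving the ``natural identification'' asserted.

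The main obstacle is the $t=2$ computation — specifically, showing that all but the trivial double coset in the geometric lemma contribute zero. I would want to verify carefully that for a nontrivial $w\in P\backslash G/P$ the corresponding subquotient of $r_{P^-}(\pi_1\times\pi_2)$ has, on at least one of the two Levi blocks, a supercuspidal support that is a proper ``mixture'' of pieces of $I_1$ and $I_2$ and hence differs from $I_1$ (resp.\ $I_2$); disjointness of $I_1, I_2$ is exactly what guarantees the mixture is a genuinely different multiset, so $\Hom$ into $\sigma_1\otimes\sigma_2$ vanishes on that piece by the support splitting. Everything else — second adjointness, the monoidal bookkeeping, the induction — is formal. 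An alternative, perhaps cleaner, route to $t=2$ is to avoid the geometric lemma and instead argue that under the disjoint-support hypothesis $\pi_1\times\pi_2$ and $\sigma_1\times\sigma_2$ lie in a block where induction $\mathbf{i}_{(n_1,n_2)}$ is an equivalence onto its image with Jacquet functor as a two-sided inverse on the relevant support component, directly yielding the tensor decomposition of $\Hom$; I would present whichever of these is shorter.
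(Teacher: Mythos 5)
Your argument is essentially the paper's own: transfer the $\Hom$ space to the Levi via the induction/Jacquet adjunction, invoke the Bernstein--Zelevinski geometric lemma, and use disjointness of the $I_i$ together with the support decomposition on $\mathfrak{R}(M)$ to kill every filtration piece except $\pi_1\otimes\cdots\otimes\pi_t$ (the paper does all $t$ factors at once rather than inducting from $t=2$, but that is immaterial). One small correction: the displayed isomorphism $\Hom_G(\pi_1\times\pi_2,\sigma_1\times\sigma_2)\cong\Hom_M(r_{P^-}(\pi_1\times\pi_2),\sigma_1\otimes\sigma_2)$ is Frobenius reciprocity for the \emph{left} adjoint $r_P$ of $\mathbf{i}_P$, not second adjointness with $r_{P^-}$; with that label fixed the proof goes through as you describe.
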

\begin{proof}
Consider the parabolic induction functor $\mathbf{i}_\alpha: \mathfrak{R}(G_{n_1}\times \cdots\times G_{n_t})\to \mathfrak{R}(G_N)$ ($N=n_1+\ldots+n_t$). It clearly gives an embedding of the space $\otimes_{i=1}^t \Hom(\pi_i,\sigma_i)$ into $\Hom( \pi_1\times\cdots\times \pi_t, \sigma_1\times\cdots\times \sigma_t)$. We are left to show that any morphism in the latter space is contained in the image of $\mathbf{i}_\alpha$.

Recall that parabolic induction has an exact left-adjoint functor $\mathbf{r}_\alpha$, that is, the Jacquet functor. This adjunction gives a canonical morphism $\epsilon_{\underline{\pi}}: \mathbf{r}_\alpha(\pi_1\times\cdots\times \pi_t) \to \pi_1\otimes \cdots\otimes \pi_t$. Similarly, we have $\epsilon_{\underline{\sigma}}: \mathbf{r}_\alpha(\sigma_1\times\cdots\times \sigma_t) \to \sigma_1\otimes \cdots\otimes \sigma_t$

The suitable Mackey theory in the form of the geometric lemma of Bernstein-Zelevinski (\cite{BZ1}, or see \cite[1.2]{LM2}) predicts a more precise information on $\epsilon_{\underline{\pi}}$.

First, $\epsilon_{\underline{\pi}}$ is surjective. Second, for any irreducible subquotient $\tau = \tau_1\otimes\cdots\otimes \tau_t$ of $\mathbf{r}_\alpha(\pi_1\times\cdots\times \pi_t)$, there is a matrix of multisets $(J_i^j)_{i,j=1}^t$ in $\mathbb{N}(\mathcal{C})$, such that $I_i = J_i^1 + \ldots + J_i^t$, and $\supp(\tau_j) = J_1^j + \ldots + J_t^j$.

Moreover, when $\tau$ is a sub-quotient of $\ker \epsilon_{\underline{\pi}}$, we know that $J_i^j\neq 0$, some $i\neq j$. In particular, by the disjointness assumption $(\supp(\tau_1),\ldots, \supp(\tau_t)) \neq  (I_1,\ldots, I_t)$ in that case. Hence, the projection $\epsilon_{\underline{\pi}}$ splits, and we can view $\pi_1\otimes \cdots\otimes \pi_t$ as a sub-representation of $\mathbf{r}_\alpha(\pi_1\times\cdots\times \pi_t)$.

Now, let $\phi$ be a morphism in $\Hom( \pi_1\times\cdots\times \pi_t, \sigma_1\times\cdots\times \sigma_t)$. By considerations of supercuspidal support, the image of $\psi:= \mathbf{r}_{\alpha}(\phi)|_{\pi_1\otimes\cdots\otimes\pi_t}$ is contained in $\sigma_1\otimes\cdots \otimes \sigma_t$ (after running similar arguments on $\epsilon_{\underline{\sigma}}$). Thus, we obtain the following commutative diagram

\[
\begin{diagram}
\node{\mathbf{r}_\alpha(\sigma_1\times\cdots\times \sigma_t)} \arrow{e,t}{\epsilon_{\underline{\pi}}} \node{\sigma_1\otimes \cdots\otimes \sigma_t} \\
\node{\mathbf{r}_\alpha(\pi_1\times\cdots\times \pi_t)} \arrow{n,t}{\mathbf{r}_{\alpha}(\phi)}\arrow{e,t}{\epsilon_{\underline{\sigma}}} \node{\pi_1\otimes \cdots\otimes \pi_t}\arrow{n,t}{\psi}
\end{diagram}\;,
\]
which implies by adjunction of functors the commutativity of the diagram
\[
\begin{diagram}
\node{\sigma_1\times\cdots\times \sigma_t} \arrow{e,t}{id} \node{\sigma_1\times \cdots\times \sigma_t} \\
\node{\pi_1\times\cdots\times \pi_t} \arrow{n,t}{\phi}\arrow{e,t}{id} \node{\pi_1\times \cdots\times \pi_t}\arrow{n,t}{\mathbf{i}_{\alpha}(\psi)}
\end{diagram}\;.
\]

%In other words, $\phi = \mathbf{i}_{\alpha}(\psi)$.

\end{proof}

\begin{corollary}\label{disjsupp-cor}
Suppose that $\pi_i, \sigma_i \in \mathfrak{R}(G_{n_i})$ are given for $i=1,\ldots,t$, together with pairwise disjoint multisets $I_1,\ldots,I_t\in \mathbb{N}(\mathcal{C})$, such that for every irreducible subquotient $\tau$ of either $\pi_i$ or $\sigma_i$, $\supp(\tau) \subseteq I_i$ holds.

Then, we have a natural identification

\[
\Hom( \pi_1\times\cdots\times \pi_t, \sigma_1\times\cdots\times \sigma_t) \cong \Hom(\pi_1,\sigma_1) \otimes\cdots\otimes \Hom (\pi_t ,\sigma_t) \;.
\]
\end{corollary}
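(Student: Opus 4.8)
The plan is to deduce Corollary~\ref{disjsupp-cor} from Proposition~\ref{disjsupp} by decomposing each $\pi_i$ and $\sigma_i$ according to the Bernstein components indexed by supercuspidal support, and by observing that the disjointness hypothesis forces the source and target components to match up index by index.

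Concretely, I would write the Bernstein decompositions $\pi_i = \bigoplus_{A} (\pi_i)_A$ and $\sigma_i = \bigoplus_{A} (\sigma_i)_A$, where $A$ ranges over $\mathbb{N}(\mathcal{C})$ and, for every non-zero summand, all irreducible subquotients of $(\pi_i)_A$, resp.\ $(\sigma_i)_A$, have supercuspidal support exactly $A$; by hypothesis only multisets $A \subseteq I_i$ contribute. Since $\mathbf{i}_\alpha$ is additive and carries a representation whose supercuspidal support lies in a single Bernstein block to the block determined by the sum of the supports, and since supercuspidal support is additive along $\times$, one gets
\[
\pi_1 \times \cdots \times \pi_t = \bigoplus_{(A_1,\ldots,A_t)} (\pi_1)_{A_1} \times \cdots \times (\pi_t)_{A_t},
\]
where the summand indexed by $(A_1,\ldots,A_t)$ has supercuspidal support $A_1 + \cdots + A_t$, and likewise for the $\sigma_i$. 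As $\Hom$ vanishes between representations lying in distinct Bernstein blocks,
\[
\Hom(\pi_1 \times \cdots \times \pi_t,\, \sigma_1 \times \cdots \times \sigma_t) \cong \bigoplus \Hom\big((\pi_1)_{A_1} \times \cdots \times (\pi_t)_{A_t},\, (\sigma_1)_{B_1} \times \cdots \times (\sigma_t)_{B_t}\big),
\]
the sum running over pairs of tuples with $A_i, B_i \subseteq I_i$ and $A_1 + \cdots + A_t = B_1 + \cdots + B_t$.

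The crux is then the elementary observation that, because $I_1,\ldots,I_t$ are pairwise disjoint and $A_j, B_j \subseteq I_j$, the equality $A_1 + \cdots + A_t = B_1 + \cdots + B_t$ forces $A_i = B_i$ for every $i$ --- indeed, $A_i$ is recovered as the restriction of the common sum to the support of $I_i$, since $A_j$ and $I_i$ have disjoint support for $j \ne i$. Hence only the ``diagonal'' terms survive. For each such term the multisets $A_1,\ldots,A_t$ are pairwise disjoint and $\supp((\pi_i)_{A_i}) = \supp((\sigma_i)_{A_i}) = A_i$, so Proposition~\ref{disjsupp} applies and gives $\Hom\big((\pi_1)_{A_1} \times \cdots \times (\pi_t)_{A_t}, (\sigma_1)_{A_1} \times \cdots \times (\sigma_t)_{A_t}\big) \cong \bigotimes_{i=1}^t \Hom\big((\pi_i)_{A_i}, (\sigma_i)_{A_i}\big)$, via the map induced by $\mathbf{i}_\alpha$. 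Summing over all diagonal tuples, using that $\bigotimes$ commutes with $\bigoplus$, and combining with $\Hom(\pi_i,\sigma_i) = \bigoplus_{A_i} \Hom\big((\pi_i)_{A_i},(\sigma_i)_{A_i}\big)$, recovers $\bigotimes_{i=1}^t \Hom(\pi_i,\sigma_i)$, the resulting isomorphism being the one induced by $\mathbf{i}_\alpha$, as claimed. I do not expect a genuine obstacle here: the only point needing a little care is the multiset bookkeeping in the middle step, together with the standard fact from the theory of the Bernstein centre that parabolic induction respects the block decomposition in the way used above.
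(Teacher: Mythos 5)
Your proposal is correct and follows exactly the route the paper takes: decompose each $\pi_i,\sigma_i$ into summands by supercuspidal support, observe that disjointness of the $I_i$ kills all off-diagonal $\Hom$'s, and apply Proposition~\ref{disjsupp} to each surviving diagonal term. You have simply spelled out in full the bookkeeping that the paper's one-line proof leaves implicit.
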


\begin{proof}
It follows directly by decomposing each $\pi_i, \sigma_i$ into direct sums according to supercuspidal support, and applying Proposition \ref{disjsupp} on each of the summands.

\end{proof}

\subsection{Tadic classification of the unitary spectrum}\label{sect-tadic}
Let $\irr^u\subset \irr$ be the subset of irreducible representations that are unitarizable, that is, those whose space can be equipped with a positive definite Hermitian form invariant under the group action.

A classification of $\irr^u$ in terms of multisegments was achieved by Tadic in \cite{Tad}. Let us briefly recall this classification.

It is easy to check that $\irr^u \cap\, \mathcal{C} = \{\rho\in \mathcal{C}\;:\; \Re(\rho)=0\}$.

For each pair of integers $a,b\in \mathbb{Z}_{>0}$ and a supercuspidal $\rho\in\irr^u \cap\, \mathcal{C}$, we define the \textit{Speh} multisegment
\[
\gotM^{a,b}_\rho = \sum_{i=1}^b \left[ \frac{b-a}2+1-i, \frac{b+a}2-i\right]_\rho\in \multi\;.
\]
We then set $\pi^{a,b}_\rho:= L(\gotM^{a,b}_\rho)$ to be a (unitary\footnote{It is also common to define a general Speh representation without the requirement that $\rho$ is unitarizable.}) \textit{Speh representation}.

Let $B\subset \multi$ denote the collection of Speh multisegments. Let us also define
\[
B^{comp} = \{ \gotM\nu^{\alpha} + \gotM\nu^{-\alpha}\;:\; \gotM\in B,\; 0<\alpha<1/2\}\;.
\]

\begin{theorem}\label{tadic}
  Let $U\subset \multi$ be the sub-monoid generated by $B \sqcup B^{comp}$ in $\multi$. Then,
  \[
  L(U) = Z(U) = \irr^u\;.
  \]
\end{theorem}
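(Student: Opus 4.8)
The statement collects three assertions, which I would prove separately: $L(U)\subseteq\irr^u$, $\irr^u\subseteq L(U)$, and $L(U)=Z(U)$. I would dispatch the last one first, reducing it to one structural input: \textbf{(ii)} the parabolic product of any finite family of building blocks — Speh representations $\pi^{a,b}_\rho$ and complementary pairs $\pi^{a,b}_\rho\nu^{\alpha}\times\pi^{a,b}_\rho\nu^{-\alpha}$, with arbitrary unramified shifts — is irreducible. Granting (ii): transposition of multisegments carries a Speh multisegment $\gotM^{a,b}_\rho$ to $\gotM^{b,a}_\rho$, so that $L(\gotM^{a,b}_\rho)=Z(\gotM^{b,a}_\rho)=\pi^{a,b}_\rho$, and likewise the elements of $B^{comp}$ come in such transposed pairs. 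Hence, for $\gotM=\gotM_1+\dots+\gotM_r$ with $\gotM_i\in B\sqcup B^{comp}$, each $L(\gotM_i)$ equals $Z(\gotM_i')$ for a transposed generator $\gotM_i'$, and by (ii) together with the multiplicity-one fact recalled in Section~\ref{sect-multis} we get $L(\gotM)=L(\gotM_1)\times\cdots\times L(\gotM_r)=Z(\gotM_1')\times\cdots\times Z(\gotM_r')=Z(\sum_i\gotM_i')\in Z(U)$; symmetry gives $L(U)=Z(U)$. Fact (ii) itself I would prove by induction on $r$ using Zelevinski's linking/reducibility criterion for products and the explicit staircase shape of Speh multisegments, in the spirit of Bernstein's irreducibility theorem for products of unitary representations.

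For the inclusion $L(U)\subseteq\irr^u$, the essential external input is that every Speh representation $\pi^{a,b}_\rho$ is unitarizable. The complementary pairs are then handled by the classical deformation argument: $\pi^{a,b}_\rho\nu^{\alpha}\times\pi^{a,b}_\rho\nu^{-\alpha}$ is irreducible and carries an invariant Hermitian form for $\alpha$ in the connected interval of irreducibility around $0$, which the linking criterion shows to be exactly $(-\tfrac12,\tfrac12)$; the form is positive-definite at $\alpha=0$ because $\pi^{a,b}_\rho\times\pi^{a,b}_\rho$ is unitarizable, and definiteness propagates across the interval, so these pairs are unitarizable for $0<\alpha<\tfrac12$. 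Since normalized parabolic induction preserves unitarizability and, by (ii), all the relevant products are irreducible, every member of $L(U)$ is an irreducible unitarizable representation.

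The substantial half is the completeness statement $\irr^u\subseteq L(U)$, which I would prove by induction on $n$. Using the supercuspidal-support decomposition (Proposition~\ref{disjsupp} and Corollary~\ref{disjsupp-cor}), any $\pi\in\irr^u$ whose support meets more than one line factors as a product of unitarizable representations of strictly smaller rank, so one reduces to $\pi=L(\gotM)$ with $\supp(\pi)$ on a single line; being Hermitian, $\pi$ satisfies $\pi\cong\overline{\eta}(\overline{\pi})$, which forces a symmetry on $\gotM$ and in particular real, centered exponents, reducing us to $\gotM\in\multi_\rho$ for a self-dual unitary $\rho$. One then runs Tadić's inductive dichotomy: either $\gotM$ contains a Speh sub-multisegment that can be peeled off, so that $\pi$ is an irreducible product of a Speh representation with a lower-rank unitarizable representation (closing the induction), or $\pi$ lies on a complementary path $\sigma\nu^{\alpha}\times\sigma\nu^{-\alpha}$ issuing from a lower-rank unitarizable $\sigma$, whence unitarity bounds $\alpha$ by the first reducibility point and forces $\sigma$ to be Speh with $\alpha<\tfrac12$; every remaining configuration is excluded by computing Jacquet modules at the pertinent reducibility point and exhibiting there an irreducible constituent that is \emph{not} unitarizable, contradicting that the induced representation is a limit of unitarizable ones.

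The main obstacle is precisely this completeness direction. It depends on two nontrivial facts: (a) the a priori unitarizability of all Speh representations, which is a deep result on its own (via the residual/automorphic spectrum or the Kirillov-model analysis), and (b) the exclusion step, which demands delicate, uniform bookkeeping with multisegments to identify at each reducibility point the constituent that drops out of the unitary dual — uniformly enough that the induction on $n$ actually closes. Fact (ii) above (irreducibility of products of building blocks) is the secondary technical hurdle; both (ii) and (b) are exactly the points where the multisegment combinatorics and the geometric-lemma machinery recalled in this section become indispensable.
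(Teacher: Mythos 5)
The paper does not prove this statement: Theorem \ref{tadic} is Tadi\'{c}'s classification of the unitary dual of $GL_n(F)$, quoted from \cite{Tad} purely as background, so there is no internal proof to measure your attempt against. Your outline is essentially a pr\'{e}cis of Tadi\'{c}'s actual strategy (the reduction of the classification to the statements usually labelled U(0)--U(4)): unitarizability of the generators, irreducibility of products of unitarizable representations, the complementary-series deformation, and exhaustion. The soft parts of your sketch are sound: $L(U)=Z(U)$ does follow from the fact that the Zelevinski involution sends $\gotM^{a,b}_\rho$ to $\gotM^{b,a}_\rho$ (hence permutes $B\sqcup B^{comp}$) combined with irreducibility of the relevant products, and the deformation argument producing the complementary series for $0<\alpha<1/2$ is the standard one.

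As a standalone proof, however, the proposal has two genuine gaps, both of which you flag but neither of which you close. First, your fact (ii) -- irreducibility of products of the building blocks -- is not accessible ``by induction on $r$ using Zelevinski's linking criterion'': that criterion governs products of segment representations $Z(\Delta_1)\times\cdots\times Z(\Delta_k)$, not products of general irreducible (let alone Speh) representations, and the irreducibility of $\pi_1\times\pi_2$ for unitarizable $\pi_1,\pi_2$ is Bernstein's theorem \cite{bern}, proved by an entirely different analytic method (the paper invokes it as an external input; the shifted products needed for $B^{comp}$ require in addition arguments of the kind recorded in Lemma \ref{LMlem}). Second, the exhaustion $\irr^u\subseteq L(U)$ is the entire content of Tadi\'{c}'s work; ``run Tadi\'{c}'s inductive dichotomy'' and ``exclude every remaining configuration by exhibiting a non-unitarizable constituent at each reducibility point'' describes the theorem rather than proving it, and the unitarizability of Speh representations (U(0)) is itself a deep independent input. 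Read as a road map to the literature your proposal is accurate; read as a proof it is not self-contained at precisely the points where the theorem is hard.
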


It is known (\cite{bern}) that $\pi_1\times\pi_2$ is irreducible, for any $\pi_1,\pi_2\in \irr^u$. Hence, the theorem above states that any $\pi\in \irr^u$ can be written in the form $\pi = \pi_1\times \cdots \times\pi_t$, where $\pi_i = L(\gotM_i)$ with $\gotM_i\in B\sqcup B^{comp}$, for all $i=1,\ldots,t$.

\begin{definition}\footnote{There may be an ambiguity regarding this definition, when compared to other sources. Arthur-type representations were defined with the goal of parameterizing the local components of the discrete automorphic spectrum of a group. Assuming the Ramanujan conjecture for $GL_n$, our definition should coincide with this global notion.}
    We say that $\pi\in \irr^u$ is of \textit{Arthur-type}, if $\gotM_i\in B$, for all $i=1,\ldots,t$ in the decomposition above.
\end{definition}

  In other words, Arthur-type representations are products of Speh representations.
\begin{definition}
We say that $\pi\in\irr^u$ is of \textit{proper Arthur-type}, if $\pi\cong \pi^{a_1,b_1}_\rho\times\cdots \times \pi^{a_k,b_k}_\rho$, for a fixed $\rho\in \irr^u \cap\, \mathcal{C}$ and integers $a_i,b_i\in \mathbb{Z}_{>0}$.

\end{definition}

Note, that proper Arthur-type representations built out of non-isomorphic elements of $\irr^u \cap\, \mathcal{C}$ will always have disjoint supercuspidal supports.

Given an Arthur-type representation $\pi = L(\gotM)$ and a real number $0<\alpha<1/2$, let us write
\[
\pi(\alpha):= \pi\nu^\alpha\times \pi\nu^{-\alpha} = L( \gotM\nu^{\alpha} + \gotM\nu^{-\alpha})\in \irr^u\;.
\]

It is easy to deduce from Theorem \ref{tadic} that for every $\pi\in \irr^u$ there is a unique, up to order, factorization of the form
\[
\pi \cong \pi_0 \times \pi_1(\alpha_1)\times\cdots\times \pi_k(\alpha_k)\;,
\]
where $\pi_0,\pi_1,\ldots,\pi_k\in \irr^u$ are of Arthur-type\footnote{At least for the purposes of this decomposition, we need to consider $L(0)\in \irr(G_0)$ (treated as neutral to multiplication) as an Arthur-type representation.} and $0<\alpha_1, \ldots, \alpha_k< 1/2$ are distinct real numbers.

\subsection{Ladder representations and the Lapid-M\'{i}nguez criterion}\label{sect-lm}

Speh representations are a special case of a class of irreducible representations known as \textit{ladder} representations. A representation $\pi\in \irr$ is called a \textit{proper}\footnote{For ease of exposition, we do not define a general ladder here. The ladder representations that occur in this manuscript are all proper ladder.}\textit{ ladder} representation, if it is given as $\pi = L(\Delta_1 + \ldots + \Delta_k)$, for segments $\Delta_i,\,i=1,\ldots,k$, satisfying
\[
\Delta_k \prec  \ldots\prec\Delta_2 \prec \Delta_1\;.
\]
Ladder representations were shown (e.g. \cite{LM,LM2,me-decomp}) to possess certain remarkable properties, which often make the ladder class more approachable for treatment of questions on general irreducible representations. Essentially the same class of representations was also studied in the literature under various names in various type $A$ settings, such as calibrated affine Hecke algebra modules, snake modules for quantum affine algebras and homogeneous modules for KLR algebras.

Given a ladder representation $\pi\in \irr$ and any representation $\sigma\in \irr$, it was shown \cite[5.15]{LM2} that both $\pi\times \sigma$ and $\sigma\times \pi$ have a unique irreducible sub-representation. Lapid-M\'{i}nguez have also devised an algorithm in \cite{LM2} for computing the multisegment classifying that sub-representation. We recall in what follows one corollary of that algorithm.

Suppose, for that purpose, that
\[
\gotM_1 = \Delta_1 + \ldots + \Delta_{k_1},\quad \gotM_2 = \Delta'_1 + \ldots + \Delta'_{k_2}\;,
\]
are two multisegments with $\Delta_{k_1} \prec \ldots \prec \Delta_1$ and $\Delta'_{k_2} \prec \ldots \prec \Delta'_1$.

Recall again that $Z(\gotM_1 + \gotM_2)$ always appears as a sub-quotient in $Z(\gotM_1)\times Z(\gotM_2)$. Among the results of \cite{LM2} is a combinatorial criterion for determining when does $Z(\gotM_1 + \gotM_2)$ actually appear as a sub-representation of $Z(\gotM_1)\times Z(\gotM_2)$.

Consider the set of indices $I = \{1,\ldots, k_1\}\times \{1,\ldots, k_2\}$, and the following bipartite graph on the set of vertices $I_1\sqcup I_2$, where $I_1= I_2 = I$ (two copies of $I$).  We say that elements $(i_1,j_1)\in I_1$ and $(i_2,j_2)\in I_2$ are in relation $(i_1,j_1) \leftrightarrow (i_2,j_2)$, if either $i_1 = i_2$ and $j_2 = j_1+1$, or $j_1= j_2$ and $i_2 = i_1-1$.

Consider the sets
\[
X_{\gotM_1;\gotM_2} = \{ (i,j)\in I_1\;:\; \Delta_i \prec \Delta'_j\}\;, \quad
Y_{\gotM_1;\gotM_2} = \{ (i,j)\in I_2\;:\; \Delta_i \prec \overrightarrow {\Delta}'_j\}\;.
\]
Here $\overrightarrow{\Delta}$ means the segment $[s+1,t+1]_\rho$, for a segment $\Delta = [s,t]_\rho$.

We can consider the bipartite graph $(X_{\gotM_1;\gotM_2},Y_{\gotM_1;\gotM_2}, \leftrightarrow)$ created by restricting the relation $\leftrightarrow$ onto edges between $X_{\gotM_1;\gotM_2}$ (as a subset of $I_1$) and $Y_{\gotM_1;\gotM_2}$ (as a subset of $I_2$).

Recall that a \textit{matching function} on a bipartite graph whose two parts of vertices are described by sets $X,Y$, is a function $f:X\to Y$, such that there is an edge between $x$ and $f(x)$, for all $x\in X$.

\begin{theorem}\cite[5.21]{LM2}\label{lap-min}
The unique irreducible sub-representation of $Z(\gotM_1)\times Z(\gotM_2)$ is isomorphic to $Z(\gotM_1 + \gotM_2)$, if and only if, there is a matching function for the bipartite graph $(X_{\gotM_1;\gotM_2},Y_{\gotM_1;\gotM_2}, \leftrightarrow)$, i.e., there exists an injective function
\[
f:X_{\gotM_1;\gotM_2}\to Y_{\gotM_1;\gotM_2}\;,
\]
which satisfies $x\leftrightarrow f(x)$, for all $x\in X_{\gotM_1;\gotM_2}$.

\end{theorem}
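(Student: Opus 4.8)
This is the Lapid--M\'inguez criterion; in \cite{LM2} it is deduced from their general algorithm for the socle of the product of a ladder representation with an arbitrary irreducible representation. I would assemble a proof along the following lines.

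\textbf{Set-up.} Since $Z(\gotM_1)$ and $Z(\gotM_2)$ are ladder representations, \cite[5.15]{LM2} already tells us that $Z(\gotM_1)\times Z(\gotM_2)$ has a unique irreducible subrepresentation $Z(\gotN)$, and, as $Z(\gotM_1+\gotM_2)$ is always a subquotient, $\gotN$ and $\gotM_1+\gotM_2$ have the same supercuspidal support; by the Bernstein-center decomposition recalled above I may therefore assume all the $\Delta_i,\Delta'_j$ lie on one line $\{\rho\nu^m:m\in\Z\}$ for a fixed $\rho\in\clss$. What must be shown is that $\gotN=\gotM_1+\gotM_2$ precisely when a matching function exists, and by Proposition \ref{GK} this is the same as asking that $Z(\gotM_1+\gotM_2)$ be the unique irreducible \emph{quotient} of $Z(\gotM_2)\times Z(\gotM_1)$.

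\textbf{Necessity.} Suppose $Z(\gotM_2)\times Z(\gotM_1)\twoheadrightarrow Z(\gotM_1+\gotM_2)$. Apply the Jacquet functor $\mathbf r_\alpha$ with $\alpha=(n_2,n_1)$, where $Z(\gotM_i)\in\mathfrak R(G_{n_i})$: by adjunction the ``diagonal'' constituent $Z(\gotM_2)\otimes Z(\gotM_1)$ must occur as a subquotient of $\mathbf r_\alpha\bigl(Z(\gotM_1+\gotM_2)\bigr)$ --- not merely of $\mathbf r_\alpha\bigl(\zeta(\gotM_1+\gotM_2)\bigr)$, which the geometric lemma of Bernstein--Zelevinski computes through an explicit filtration indexed by ways of splitting each segment. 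A count of the multiplicity of $Z(\gotM_2)\otimes Z(\gotM_1)$ in the Jacquet modules of $\zeta(\gotM_1+\gotM_2)$ and of its submodule $Z(\gotM_1+\gotM_2)$ --- both governed by the fact that Jacquet modules of ladders are multiplicity-free with explicitly known constituents --- then shows that survival of this constituent into the small submodule forces a system of distinct representatives between the bad pairs $(i,j)$ with $\Delta_i\prec\Delta'_j$ and the slack available among the pairs with $\Delta_i\prec\overrightarrow{\Delta}'_j$, compatible with the adjacency $\leftrightarrow$; that is, exactly a matching function for $(X_{\gotM_1;\gotM_2},Y_{\gotM_1;\gotM_2},\leftrightarrow)$.

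\textbf{Sufficiency.} For the converse I would induct on $k_1+k_2$, peeling off one extreme segment of one of the two ladders. The base cases, in which one of the multisegments is a single segment, reduce --- via the known reductions for ladder products in \cite{LM2} --- to the Lapid--M\'inguez single-segment socle algorithm, whose output equals $Z(\gotM_1+\Delta')$ exactly in the single-segment case of the matching condition. For the inductive step one realizes $Z(\gotM_1)\times Z(\gotM_2)$ inside the iterated product obtained by splitting off the chosen extreme segment, applies the single-segment algorithm to its insertion, and invokes the inductive hypothesis for the remaining segments: the restriction of the given matching function to the unaffected vertices serves for the smaller problem, while the edges incident to the peeled segment certify that the final insertion introduces no modification, so that the accumulated multisegment stays equal to $\gotM_1+\gotM_2$.

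\textbf{Expected main obstacle.} The analytic crux is the single-segment socle algorithm for a ladder times a segment --- a statement in the spirit of the M\oe glin--Waldspurger algorithm, whose proof in \cite{LM2} rests on delicate Jacquet-module and derivative bookkeeping; I would import it. Granting it, the remaining difficulty is entirely combinatorial: one must identify the ``modification'' produced by the algorithm at the insertion of the peeled segment with the failure of Hall's condition localized at the corresponding vertices, and --- the more delicate point --- show that once a modification is triggered it cannot be repaired at a later step. This needs a monotonicity lemma for the bipartite graphs $(X,Y,\leftrightarrow)$ under the peeling operation, to the effect that a greedy (say leftmost) matching is optimal, so that the existence of \emph{some} matching function is genuinely equivalent to the algorithm running modification-free throughout.
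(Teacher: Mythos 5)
The paper does not prove this statement: Theorem \ref{lap-min} is quoted verbatim from \cite[5.21]{LM2} and used as a black box, so there is no in-paper argument to measure your proposal against. Your set-up is consistent with how the surrounding text uses the result (uniqueness of the socle from \cite[5.15]{LM2}, reduction to a single cuspidal line, and the socle/cosocle switch via Proposition \ref{GK}), and your overall architecture --- base case of a ladder times a single segment, then induction by peeling extreme segments, with the combinatorics organized by Hall's condition --- is broadly the shape of the actual Lapid--M\'inguez argument.

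As a proof, however, the proposal has genuine gaps in both directions. For necessity, the sentence ``a count of the multiplicity \dots shows that survival of this constituent into the small submodule forces a system of distinct representatives compatible with $\leftrightarrow$'' is precisely the content of the theorem and is asserted rather than derived: while Jacquet modules of ladders are indeed multiplicity free, $Z(\gotM_1)\times Z(\gotM_2)$ is not a ladder, the constituent $Z(\gotM_2)\otimes Z(\gotM_1)$ can occur with multiplicity greater than one in $\mathbf r_\alpha(\zeta(\gotM_1+\gotM_2))$, and no mechanism is given that converts a multiplicity comparison into the specific bipartite graph $(X_{\gotM_1;\gotM_2},Y_{\gotM_1;\gotM_2},\leftrightarrow)$ with its particular adjacency rule. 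For sufficiency, you yourself flag that the argument hinges on an unproved ``monotonicity lemma'' guaranteeing that a greedy matching is optimal and that a modification, once triggered, cannot be repaired later; without that lemma the induction does not close. Since the key single-segment socle algorithm is also imported, the proposal in its current form is an outline of where the proof lives rather than a proof --- which, to be fair, is exactly how the paper itself treats the statement.
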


\subsection{Bernstein-Zelevinski derivatives}
For given $n_1,n_2$, consider the subgroup $U< G_{n_1}\times G_{n_2}$ of upper unitriangular matrices in $G_{n_2}$. Let $\psi$ be a non degenerate character of $U$.

We then have an obvious functor of taking co-invariants:
\[
\begin{array}{cccc} W_{n_1,n_2}: & \mathfrak{R}(G_{n_1}\times G_{n_2}) & \to & \mathfrak{R}(G_{n_1}) \\
            & V & \mapsto & V/\sspan\{ g\cdot v - \psi(g)v\;:\; g\in G_{n_2},\,v\in V\}
            \end{array}\;.
\]
The Bernstein-Zelevinski derivatives of \cite{BZ1} can be defined as functors $\mathfrak{R}(G_{n}) \to \mathfrak{R}(G_{n-i})$ constructed by composing $W$ with the Jacquet functor.

More precisely, given $\pi \in \mathfrak{R}(G_n)$, we set its $i$-th derivative to be
\[
\pi^{(i)}: = W_{n-i,i}(\mathbf{r}_{(n-i,i)}(\pi))\in \mathfrak{R}(G_{n-i})\;,
\]
for all $1\leq i\leq n$. Here $\mathbf{r}_{(n-i,i)}$ is the Jacquet functor left-adjoint to $\mathbf{i}_{(n-i,i)}$.

We set $\pi^{(0)} = \pi$.

The derivatives comply with a Leibniz rule, in the following sense.

\begin{proposition}\label{leibn}
Suppose that $\pi_i\in \mathfrak{R}(G_{n_i})$, $i=1,\ldots,k$ are given.
\begin{enumerate}
  \item  The representation $(\pi_1\times\cdots\times \pi_k)^{(j)}$ has a filtration whose constituents are given by all representations of the form $\pi^{(s_1)}_1\times \cdots\times \pi^{(s_k)}_k$, with $0\leq s_i\leq n_i$ for all $i=1,\ldots, k$ and $j= s_1+\ldots + s_k$.
  \item Suppose that $\pi_i^{(m_i)}$ is the highest non-zero derivative of $\pi_i$, for all $i=1,\ldots, k$. Suppose that $j = m_t+ m_{t+1}+\ldots + m_k$, for some $1\leq t\leq k$. Then, the representation
      \[
      \pi_1\times \cdots \times \pi_{t-1}\times \pi_t^{(m_t)}\times \cdots\times \pi_k^{(m_k)}
      \]
  is a quotient of $(\pi_1\times\cdots\times \pi_k)^{(j)}$, that is, the uppermost piece of the above filtration.
\end{enumerate}
  \begin{proof}
  \cite[Corollary 4.14(c)]{BZ1} together with its proof.
  \end{proof}

\end{proposition}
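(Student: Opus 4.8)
This is a form of the Leibniz rule for Bernstein--Zelevinski derivatives, and, as the one-line proof indicates, it is essentially \cite[Cor.~4.14(c)]{BZ1} and its proof; the plan is to recall how that argument yields both assertions in the present formulation.

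For part (1) I would compute directly from the definition $(\pi_1\times\cdots\times\pi_k)^{(j)}=W_{n-j,\,j}\bigl(\mathbf{r}_{(n-j,j)}(\pi_1\times\cdots\times\pi_k)\bigr)$, with $n=n_1+\cdots+n_k$. First, apply the geometric lemma of \cite{BZ1} to $\mathbf{r}_{(n-j,j)}\circ\mathbf{i}_{(n_1,\dots,n_k)}$: the relevant double cosets are parametrised by tuples $(s_1,\dots,s_k)$ of non-negative integers with $\sum_i s_i=j$ and $s_i\le n_i$, and the lemma exhibits $\mathbf{r}_{(n-j,j)}(\pi_1\times\cdots\times\pi_k)$ as a filtered object whose $(s_1,\dots,s_k)$-subquotient is obtained by inducing to $G_{n-j}\times G_j$ an appropriate reshuffling (with normalisation twist) of $\bigotimes_i\mathbf{r}_{(n_i-s_i,s_i)}(\pi_i)$, the ``$G_{n-j}$-halves'' of the $\mathbf{r}_{(n_i-s_i,s_i)}(\pi_i)$ being gathered into one parabolically induced product and the ``$G_j$-halves'' into another. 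Then apply $W_{n-j,j}$, which is exact (passing to $(U,\psi)$-coinvariants along a unipotent subgroup with non-degenerate character is an exact operation on smooth representations); it carries the above filtration to a filtration of $(\pi_1\times\cdots\times\pi_k)^{(j)}$, and, using transitivity of Jacquet functors together with the way the coinvariants functor interacts with the induced ``$G_j$-half'', the image of the $(s_1,\dots,s_k)$-subquotient is identified with precisely $\pi_1^{(s_1)}\times\cdots\times\pi_k^{(s_k)}$. (Equivalently one could reduce to $k=2$ and iterate, using exactness and associativity of parabolic induction.)

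For part (2), fix $j=m_t+\cdots+m_k$ and work inside the filtration of part (1). Two inputs are needed: first, $\pi_i^{(s)}=0$ for $s>m_i$, so only the subquotients indexed by tuples with $s_i\le m_i$ for all $i$ can be non-zero; second --- and this is where one must record the order in which the geometric lemma stacks its subquotients --- the filtration can be arranged so that the subquotient indexed by $(s_1,\dots,s_k)$ lies above the one indexed by $(s'_1,\dots,s'_k)$ whenever $\sum_{i\ge l}s_i\ge\sum_{i\ge l}s'_i$ for every $l$ (the terms with the derivative pushed as far onto the rightmost factors as possible sit on top), this ordering being the one established in the proof of \cite[Cor.~4.14(c)]{BZ1}. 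Granting these, the hypothesis $j=m_t+\cdots+m_k$ makes the tuple $(0,\dots,0,m_t,m_{t+1},\dots,m_k)$ strictly dominate, in the above order, every other admissible tuple, so its subquotient $\pi_1\times\cdots\times\pi_{t-1}\times\pi_t^{(m_t)}\times\cdots\times\pi_k^{(m_k)}$ --- which is non-zero, being a product of non-zero representations --- is the topmost surviving piece of the filtration, hence a quotient of $(\pi_1\times\cdots\times\pi_k)^{(j)}$.

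The step I expect to demand the most care, and which carries the genuinely non-formal content of \cite[Cor.~4.14(c)]{BZ1}, is the bookkeeping inside the geometric lemma: matching each combinatorial index with its subquotient and verifying that $W_{n-j,j}$ turns it into $\pi_1^{(s_1)}\times\cdots\times\pi_k^{(s_k)}$ with no spurious character twist --- this already uses the ``highest derivative of a product'' identity, of the shape $(\tau_1\times\tau_2)_{U,\psi}\cong(\tau_1)_{U,\psi}\otimes(\tau_2)_{U,\psi}$ --- together with pinning down precisely the order in which the geometric lemma arranges its subquotients so that the distinguished term in part (2) is located at the top.
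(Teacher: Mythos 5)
Your proposal is correct and follows essentially the same route as the paper, which simply cites \cite[Corollary 4.14(c)]{BZ1} ``together with its proof'': you reconstruct exactly that argument, namely the geometric lemma applied to $\mathbf{r}_{(n-j,j)}\circ\mathbf{i}_{(n_1,\dots,n_k)}$, exactness and multiplicativity of the Whittaker coinvariants functor to identify each subquotient as $\pi_1^{(s_1)}\times\cdots\times\pi_k^{(s_k)}$, and the ordering of the strata to place the tuple $(0,\dots,0,m_t,\dots,m_k)$ (which indeed dominates every admissible tuple with $s_i\le m_i$ and $\sum_i s_i=j$) at the top. The points you flag as delicate (absence of spurious twists, the precise stacking order) are exactly the content of the cited proof, so nothing is missing.
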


\section{Affine Hecke algebras}\label{sect-hecke}
Given $n\in \mathbb{Z}_{>0}$ and $q\in \mathbb{C}$ (which for our needs will be assumed to be non-root of unity), the root datum of $GL_n$ gives rise to the (extended) \textit{affine Hecke algebra} $H(n,q)$. In fact, we will not be using here the concrete algebraic structure of these algebras, but rather some abstract information on their categories of representations.

Yet, to avoid confusion let us recall a possible presentation of $H(n,q)$: This is the complex algebra generated by $T_1,\ldots, T_{n-1}$ and invertible $y_1,\ldots,y_n$, subject to the relations
\[
\begin{array}{ll}
T_i T_{i+1} T_i = T_{i+1} T_i T_{i+1},\; & \forall 1\leq i\leq n-2\\

(T_i -q)(T_i+1)=0,\;& \forall 1\leq i \leq n-1\\

T_iT_j = T_j T_i,\;  & \forall |j-i|>1\\

y_iy_j = y_jy_i,\;& \forall 1\leq i,j\leq n\\

T_i y_iT_i = qy_{i+1},\; &\forall 1\leq i\leq n-1\\

T_i y_j = y_jT_i,\; &\forall j\neq i, i+1\;.
\end{array}\;
\]

We denote by $\mathcal{M}^q_n$ the category of finite-dimensional modules over the algebra $H(n,q)$.
%when $q$ is not a root of unity. It is known that $\mathcal{M}_n$ is independent of the choice of $q$, in the sense that there are canonical equivalences of categories between different choices.

For any $n_1,\ldots, n_t\in\mathbb{Z}_{>0}$, there is a natural embedding of algebras
\[
H(n_1,q)\otimes \cdots \otimes H(n_t,q)\hookrightarrow H(n_1+\ldots+ n_t,q)\;.
\]
This embedding gives rise to an induction functor
\[
\mathbf{i}_{(n_1,\ldots, n_t)}: \mathcal{M}^q_{n_1} \times \cdots \times  \mathcal{M}^q_{n_t} \to \mathcal{M}^q_{n_1+\ldots+n_t}\;,
\]
which we will simply denote as a product operation, i.e., $\pi_1\times \pi_2:= \mathbf{i}_{(n_1,n_2)}(\pi_1\otimes \pi_2)$, for $\pi_i \in \mathcal{M}^q_{n_i}$.

\subsection{Equivalence to Bernstein blocks}\label{sect-equiv-heck}

Let $\sim$ be an equivalence relation on the elements of $\mathbb{N}(\mathcal{C})$ defined as follows: For $A,B\in \mathbb{N}(\mathcal{C})$, we say that $A\sim B$, if there are representations $\rho_1,\ldots,\rho_t\in \mathcal{C}$ and numbers $s_1,\ldots,s_t\in \mathbb{C}$, such that
\[
A = \rho_1 +\ldots + \rho_t,\quad B = \rho_1\nu^{s_1} + \ldots +\rho_t\nu^{s_t}\;.
\]
Note, that the number $N_A: = n_1 + \ldots + n_t$, where $\rho_i \in \mathcal{C}(G_{n_i})$, for $i=1,\ldots,t$, is an invariant of the $\sim$-equivalence class of $A$ (also known as the inertia class). Hence, we will write $N_\Theta$, where $\Theta$ denotes that equivalence class. %We will also write $t_\Theta = t$.

Each inertia class $\Theta$ defines the \textit{Bernstein block}\footnote{These sub-categories may become blocks in the more axiomatic treatment of Abelian cateogries when dealing with larger categories of all smooth representations of a group.}  $\mathfrak{R}(\Theta)$, which is the full sub-category of $\mathfrak{R}(G_{N_\Theta})$ consisting of representations $\pi$, such that $\supp(\sigma)$ belongs to $\Theta$ for all irreducible sub-quotients of $\pi$.

The Bernstein decomposition \cite{bern-center} (in the case of $GL_n$) states that we have a decomposition of Abelian categories
\[
\mathfrak{R}(G_n) = \prod_{\Theta} \mathfrak{R}(\Theta)\;,
\]
where the product goes over all inertia classes $\Theta$, for which $N_\Theta = n$.

When writing such a decomposition, we mean that every object $M$ in $\mathfrak{R}(G_n)$ is decomposed uniquely as $M\cong \oplus_\Theta M_\Theta$, where each $M_\Theta$ is an object in $\mathfrak{R}(\Theta)$, all but finitely many $M_\Theta$ are zero (we deal with finite length objects), and
\[
\Hom_{\mathfrak{R}(G_n)} = \oplus_\Theta \Hom_{\mathfrak{R}(\Theta)}( M_\Theta, N_\Theta)\;,
\]
holds, for all $M,N$ in $\mathfrak{R}(G_n)$.

We will call $\mathfrak{R}(\Theta)$ a \textit{simple block}, if $\Theta = \Theta(\rho,d)$ has a representative of the form $d\cdot\rho\in \mathbb{N}(\mathcal{C})$, where $\rho\in \mathcal{C}$ and $d\geq1$ is an integer. %Clearly, $t_{\Theta(\rho,d)} = d$.

It is evident that for $\pi_1 \in \mathfrak{R}(\Theta(\rho,n_1))$ and $\pi_2 \in \mathfrak{R}(\Theta(\rho,n_2))$, the representation $\pi_1\times \pi_2$ belongs to the block $\mathfrak{R}(\Theta(\rho,n_1+n_2))$.

The clear consequence of the above is that for any $\rho\in \mathcal{C}$, the irreducible representations appearing in the sequence of blocks $\{\mathfrak{R}(\Theta(\rho,n))\}_{n=0}^\infty$ are precisely those given by $Z(\multi_\rho)$ or $L(\multi_\rho)$.

For the trivial representation $\nu^0$ of $G_1$, we set $\Theta_n= \Theta(\nu^0,n)$, for all $n\geq1$. The simple block $\mathfrak{R}(\Theta_n)$ is called the \textit{principal} (or \textit{Iwahori-invariant}) block of $\mathfrak{R}(G_n)$.

It was shown in \cite{bk-ss} and \cite{heier-cat} that for every simple block $\Theta=\Theta(\rho,d)$, we have an equivalence of categories
\[
U_\Theta :  \mathfrak{R}(\Theta) \xrightarrow{\sim}\mathcal{M}^{q_F^{o(\rho)}}_{d}\;.
\]

The equivalences $\{U_\Theta\}$ are not canonical, yet they can be chosen in a way that is compatible with parabolic induction \cite{roche}. Namely, we are allowed to assume that
\begin{equation}\label{monoidU}
U_{\Theta(\rho,n_1+n_2)}(\pi_1\times\pi_2) \cong  U_{\Theta(\rho,n_1)}(\pi_1) \times U_{\Theta(\rho,n_2)}(\pi_2)\;
\end{equation}
holds, for all $\pi_1 \in \mathfrak{R}(\Theta(\rho,n_1))$ and $\pi_2 \in \mathfrak{R}(\Theta(\rho,n_2))$.

The particular case of equivalences for principal blocks
\[
U_n= U_{\Theta_n}:  \mathfrak{R}(\Theta_n)\xrightarrow{\sim} \mathcal{M}^{q_F}_n \, ,\;\forall n\;
\]
is in fact a classical theorem\footnote{More precisely, the cited theorem relates the principal block to representations of the Iwahori-Hecke algebra of $G_n$, which is then known by a result of Bernstein to be isomorphic to our definition of $H(n,q)$ (see, for example, the lectures \cite{howe-heck}).} of Borel \cite{borel-equiv} and Casselman.

We will fix a canonical choice of $\{U_n\}$ which is supplied by said theorem.

Let us note that the irreducible representations of the algebra $H(1,q_F)=\mathbb{C}[y_1,y_1^{-1}]$ are naturally given by the variety $\mathbb{C}^\times$. Note further that the irreducible representations in $\mathfrak{R}(\Theta_1)$ are given by the group of unramified characters $\{\nu^s\,:\,s\in\mathbb{C}\}$. Under these identifications, the canonical equivalence $U_1$ takes $\nu^s$ to the character given by $q_F^s\in\mathbb{C}^\times$.

We can use $U_n$ to push a parametrization of the irreducible representations of $H(n,q_F)$ in terms of multisegments. Recall that the irreducible representations in $\mathfrak{R}(\Theta_n)$ are parameterized by $\multi_0$, either through $Z$ or $L$. Hence, we write bijections
\[
\hat{Z}, \hat{L} : \multi_0 \to \bigcup_{n\geq0}\irr(\mathcal{M}^{q_F}_n) \;,
\]
defined by $\hat{Z}(\gotM) = U_{n_\gotM}(Z(\gotM))$, where $Z(\gotM)\in \irr(G_{n_\gotM})$. Similarly, $\hat{L}$ is defined by $L$.

\begin{remark}\label{rem-int}
Note, that $\hat{Z}$ (or $\hat{L}$) can in fact be described intrinsically (that is, without use of $p$-adic groups) as was done in \cite{rog-hecke}. In particular, these classifications are independent of the field $F$.
\end{remark}

The following proposition shows that all equivalences $\{U_\Theta\}$ preserve the Zelevinski and Langlands parametrizations of irreducible representations, in a natural sense.

\begin{proposition}\label{equiv-block}
Let $\rho\in \mathcal{C}$ be given.

Let $E$ be a $p$-adic field with residue cardinality $q_E = q_F^{o(\rho)}$. Let
\[
\phi = \phi_{\nu^0_E, \rho}:   \multi_0^E \to \multi_\rho
\]
be the isomorphism defined in Section \ref{sect-multis}, where $\nu^0_E$ stands for the trivial representation of $GL_1(E)$.

Let us write
\[
\hat{Z} : \multi^E_0 \to \bigcup_{n\geq0}\irr(\mathcal{M}^{q_E}_n) \;,
\]
for the map as defined above (but for the field $E$ in place of $F$).

Then, we can choose the collection $\{U_{\Theta(\rho,n)}\}_n$, so that it satisfies
\[
\hat{Z}(\gotM) \cong U_{\Theta(\rho, n_\gotM)}(Z(\phi(\gotM)))\;,
\]
for all $\gotM\in \multi^E_0$, where $\hat{Z}(\gotM)\in \mathcal{M}^{q_E}_{n_\gotM}$.

\end{proposition}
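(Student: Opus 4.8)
The plan is to reduce the claimed compatibility to the already-fixed behavior on a single supercuspidal line. First I would recall the structure of the simple block $\mathfrak{R}(\Theta(\rho,n))$: every irreducible object is of the form $Z(\gotM)$ with $\gotM\in\multi_\rho$, and the monoid $\multi_\rho$ is generated (as a commutative monoid) by the one-element multisegments, i.e.\ the single segments $[a,b]_{\rho\nu^s}$. The isomorphism $\phi=\phi_{\nu^0_E,\rho}\colon\multi^E_0\to\multi_\rho$ of Section~\ref{sect-multis} sends $[a,b]_{\nu^s_E}$ to $[a,b]_{\rho\nu^s}$; on the corresponding Hecke side, the map $\hat{Z}$ (resp.\ $\hat{Z}$ over $E$) is defined via $U_n$ (resp.\ via the analogous equivalences for the $E$-principal block). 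Since $q_E=q_F^{o(\rho)}$, both sides land in module categories over the \emph{same} algebra $H(n,q_E)$, so the statement is an equality of two parametrizations of $\irr(\mathcal{M}^{q_E}_n)$ by $\multi^E_0$.

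The key observation is that by Remark~\ref{rem-int}, the parametrization $\hat{Z}$ of $\irr(\mathcal{M}^{q}_n)$ by $\multi_0$ is \emph{intrinsic} to the affine Hecke algebra (via \cite{rog-hecke}) and hence does not depend on which $p$-adic field one uses to realize $H(n,q)$ as a Bernstein block. Therefore it suffices to exhibit \emph{some} admissible choice of $\{U_{\Theta(\rho,n)}\}_n$ — compatible with parabolic induction as in \eqref{monoidU} — for which $U_{\Theta(\rho,n_\gotM)}(Z(\phi(\gotM)))$ realizes this intrinsic parametrization. Concretely, I would proceed as follows. Step 1: handle $n=1$. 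Both $\irr(\Theta(\rho,1))$ and $\irr(\mathcal{M}^{q_E}_1)=\irr(\mathbb{C}[y_1^{\pm1}])$ are torsors under the unramified character group $\{\nu^s\}$ (acting on $\rho$, resp.\ on the $\mathbb{C}^\times$-characters through $q_E^s$), and $o(\rho)=1$ for the line of $\rho$ after rescaling; pinning down $U_{\Theta(\rho,1)}$ to send $\rho\nu^s\mapsto q_E^s$ matches $\hat{Z}$ on length-one multisegments by construction of $\hat{Z}$ over $E$ (which sends $\nu^s_E\mapsto q_E^s$), i.e.\ on a single segment $[a,b]_{\rho\nu^s}$ one checks directly that $U_{\Theta(\rho,\cdot)}(Z([a,b]_{\rho\nu^s}))$ agrees with $\hat{Z}([a,b]_{\nu^s_E})$, both being the ``Steinberg-type'' module on $q_E^a,\dots,q_E^b$. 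Step 2: bootstrap to general $n$. Use \eqref{monoidU}: the functors $U_{\Theta(\rho,n)}$ are monoidal for the product, and the $E$-side equivalences $U^E_n$ are likewise monoidal, so both $\gotM\mapsto \hat{Z}(\gotM)$ and $\gotM\mapsto U_{\Theta(\rho,n_\gotM)}(Z(\phi(\gotM)))$ are compatible with writing $Z(\gotM)$ as the unique irreducible submodule of the standard module $\zeta(\gotM)=Z(\Delta_1)\times\cdots\times Z(\Delta_k)$ (with $\Delta_i\nprec\Delta_j$ for $i<j$). Since monoidal functors and the equivalences $U$ preserve socles, the image of $Z(\phi(\gotM))$ under $U_{\Theta(\rho,n_\gotM)}$ is the unique irreducible submodule of $U_{\Theta(\rho,n_1)}(Z(\Delta_1))\times\cdots$, which by Step~1 equals $\hat{Z}(\phi^{-1}(\Delta_1))\times\cdots$, whose unique irreducible submodule is by definition $\hat{Z}(\gotM)$ (and the decomposition of $\gotM$ into $\phi$-images of segments matches the decomposition of $\phi^{-1}(\gotM)\in\multi^E_0$ because $\phi$ preserves the precedence relation $\prec$, being defined by the identity on the indices $a,b$).

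Thus the argument reduces entirely to: (i) the $n=1$ normalization, which is where one must be careful that $o(\rho)$ and $q_E=q_F^{o(\rho)}$ conspire to make the unramified-character actions match; and (ii) the general principle that an auto-equivalence monoidal for parabolic induction preserves the Zelevinski/Langlands submodule/quotient structure — this is where one invokes that $U_\Theta$ is exact, sends irreducibles to irreducibles, and that $Z(\gotM)$ (resp.\ $\hat{Z}(\gotM)$) is characterized as $\soc$ of the standard module. The main obstacle I anticipate is a bookkeeping one rather than a conceptual one: verifying that the existing freedom in choosing $\{U_{\Theta(\rho,n)}\}$ (compatible with induction via \cite{roche}) is exactly enough to fix the twist on the single line $\seg_\rho$ so as to match $\phi$, and that no further monodromy obstruction arises — i.e.\ that the intrinsic Hecke-side parametrization of \cite{rog-hecke} is stable under the residual twists by $\frac{2\pi i}{\log q_E}\mathbb{Z}$. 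Once Step~1 is set up carefully, Step~2 is formal. I would also remark that the statement for $\hat{L}$ follows identically, using that $U_\Theta$ (being an equivalence compatible with induction, hence with the Gelfand--Kazhdan type duality on both sides) intertwines the $Z$- and $L$-classifications in the same way on both the $p$-adic and Hecke sides, or simply by running the same socle argument with ``unique irreducible quotient of the standard module'' in place of ``unique irreducible submodule''.
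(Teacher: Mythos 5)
Your overall architecture coincides with the paper's: both reduce the statement, via the monoidality property \eqref{monoidU}, the fact that the $U_\Theta$ are exact equivalences, the socle characterization of $Z(\gotM)$ inside the standard module, and the field-independence of $\hat{Z}$ (Remark \ref{rem-int}), to a single normalization statement at $n=1$, namely that $U_{\Theta(\rho,1)}$ may be chosen to send $\rho\nu^s$ to the character $y_1\mapsto q_E^s$ of $H(1,q_E)=\mathbb{C}[y_1,y_1^{-1}]$. Your Step 2 is essentially the paper's first paragraph.

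However, the $n=1$ step is where the proof's only genuine content lies, and you have not actually carried it out: you ``pin down'' the desired normalization and then defer, in your closing paragraph, precisely the question of whether such a choice exists within the Roche-compatible family --- calling it bookkeeping. It is not automatic. An equivalence $\mathfrak{R}(\Theta(\rho,1))\simeq \mathcal{M}^{q_E}_1$ places no constraint by itself on the induced bijection of irreducibles (all simple $\mathbb{C}[y_1^{\pm1}]$-modules look alike categorically), and the constraint of compatibility with induction at all higher $n$ only forces the bijection to intertwine the reducibility pattern, which a priori leaves residual freedom (an unramified twist and an inversion) that must be pinned down. The paper closes this by computing with Heiermann's explicit construction: $U_{\Theta(\rho,1)}(\pi)=\Hom_{G_m}(W,\pi)$ for a specific $\overline{\rho}$-stable subspace $W\subseteq V[t,t^{-1}]$ with $\operatorname{End}(\overline{\rho}|_W)=\mathbb{C}[t^{\pm o(\rho)}]\cong H(1,q_E)$; the one-dimensional space $\Hom_{G_m}(W,\rho\nu^s)$ is spanned by the projection $\psi_s(vt^k)=q_F^{ks}v$, on which the generator $t^{o(\rho)}$ acts by $q_F^{o(\rho)s}=q_E^s$. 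Some such concrete verification (or an explicit argument that the residual freedom can always be normalized away consistently across all $n$ simultaneously) is needed to complete your Step 1; as written, the crux of the proposition is asserted rather than proved.
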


\begin{proof}
Recall that for a multisegment $\gotM = \sum_{i=1}^k \Delta_i \in \multi_0^E$, $Z(\gotM)$ is defined to be the unique irreducible sub-representation of $\zeta(\gotM)= Z(\Delta_1)\times\cdots \times Z(\Delta_k)$ (for a prescribed ordering of the segments of $\gotM$, see Section \ref{sect-multis}). Furthermore, each $Z(\Delta_i)$ is the unique irreducible sub-representation of $\nu_E^{s^i_1}\times \cdots\times \nu_E^{s^i_{n_i}}$, for some numbers $s^i_1,\ldots,s^i_{n_i}\in \mathbb{C}$.

Similarly, $Z(\phi(\gotM))$ is constructed by the same process, while replacing the role of each $\nu_E^{s^i_j}$ by $\rho\nu^{s^i_j}$.

Hence, using the compatibility property \eqref{monoidU}, the fact that $\{U_\Theta\}$ are all equivalences of abelian categories and Remark \ref{rem-int}, it is enough to check the statement for $U_{\Theta(\rho,1)}$.

In other words, we need to verify that $U_{\Theta(\rho,1)}$ is allowed to be chosen so that for all $s\in\mathbb{C}$, the representation $\rho\nu^s$ in $\mathfrak{R}(\Theta(\rho,1))$ is sent to the character of $H(1, q_E)=\mathbb{C}[y_1,y_1^{-1}]$ given by $y_1\mapsto q_E^s$.

Let us recall the construction of $U_{\Theta(\rho,1)}$ in \cite{heier-cat}. Let $V$ be the space of the representation $\rho\in \mathcal{C}(G_m)$. Then, $G_m$ acts on the space $V[t,t^{-1}]$ by
\[
\overline{\rho}(g)\cdot(vt^k) = \rho(g)vt^{k+ val(\det(g))},\;\forall g\in G_m,\,v\in V,\,k\in \mathbb{Z}\;,
\]
where $|a|_F = q_F^{val(a)}$, for $a\in F$.

The natural action of the ring of Laurent polynomials $\mathbb{C}[t,t^{- 1}]$ on $V[t,t^{-1}]$ intertwines the $\overline{\rho}$ action of $G_m$. The construction in \cite{heier-cat} specifies a sub-representation $W< V[t,t^{-1}]$ of $\overline{\rho}$ which is stable under the action of the sub-algebra $H = \mathbb{C}[t^{o(\rho)},t^{-o(\rho)}]\subset \mathbb{C}[t,t^{-1}]$. Moreover, we have
\[
End(\overline{\rho}|_W) = H\;.
\]

The functor $U_{\Theta(\rho,1)}$ is then defined by taking $\pi\in\mathfrak{R}(\Theta(\rho,1))$ to
\[
U_{\Theta(\rho,1)}(\pi):= \Hom_{G_m} (W, \pi)\;,
\]
viewed as a $H\cong H(1,q_E)$-module.

For all $s\in\mathbb{C}$, there is a projection of representations $\psi_s: \overline{\rho}\to \rho\nu^s$ given by $\psi_s(vt^k) = q_F^{ks}v$. It is easy to verify that $U_{\Theta(\rho,1)}(\rho\nu^s)$ is a one-dimensional space spanned by $\psi_s|_W$.

Note, that for the generator $t^{o(\rho)}\in H$ and $w\in W$, we have $\psi_s(t^{o(\rho)}\cdot w) = q_F^{o(\rho)s}\psi_s(w)$. Hence, $H(1,q_E)$ acts on $U_{\Theta(\rho,1)}(\rho\nu^s)$ through the character $y_1\mapsto q_E^s$.

\end{proof}

\subsection{Consequences of a result of Hernandez}\label{sect-conseq}

Section \ref{sect-qa} deals with quantum affine algebras and the quantum affine Schur-Weyl duality functor. The following theorem on representations of affine Hecke algebras will be shown to be a manifestation of a theorem of Hernandez \cite{hernan-cyc}, when transferred through the duality functor.

\begin{theorem}\label{transl}
  Let $\pi_1 = \hat{Z}(\gotM_1),\ldots,\pi_t = \hat{Z}(\gotM_t)$ be irreducible representations in $\mathcal{M}_{k_1}^{q_F},\ldots,\mathcal{M}_{k_t}^{q_F}$, respectively.

If $\pi_i\times \pi_j$ has a unique irreducible quotient which is parameterized by $\hat{Z}(\gotM_i+\gotM_j)$, for all $1\leq i < j\leq t$, then the representation $\pi_1\times\cdots\times \pi_t$ has a unique irreducible quotient which is parameterized by $\hat{Z}(\gotM_1 +\cdots + \gotM_t)$.

\end{theorem}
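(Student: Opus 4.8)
The plan is to reduce Theorem \ref{transl} to the $p$-adic statement of Theorem \ref{thm:intro3} by transporting everything through the equivalence $U_n$ of the principal block. Concretely, since each $\hat{Z}(\gotM_i)$ equals $U_{k_i}(Z(\gotM_i))$ with $Z(\gotM_i)\in\irr(\Theta_{k_i})$, and the $U_n$ are monoidal in the sense of \eqref{monoidU}, the product $\pi_1\times\cdots\times\pi_t$ is the image under $U_{k_1+\cdots+k_t}$ of the $p$-adic product $Z(\gotM_1)\times\cdots\times Z(\gotM_t)$. An equivalence of abelian categories preserves the lattice of subobjects and quotient objects; in particular it sends the (unique, if it exists) irreducible quotient of an object to the unique irreducible quotient of its image, and it preserves irreducibility and composition multiplicities. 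So the hypothesis ``$\pi_i\times\pi_j$ has a unique irreducible quotient parameterized by $\hat{Z}(\gotM_i+\gotM_j)$'' translates, via $U_{k_i+k_j}$ and the identity $\hat{Z}(\gotM_i+\gotM_j)=U_{k_i+k_j}(Z(\gotM_i+\gotM_j))$, into ``$Z(\gotM_i)\times Z(\gotM_j)$ has a unique irreducible quotient, namely $Z(\gotM_i+\gotM_j)$.''

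The next step is to rephrase this $p$-adic hypothesis in Langlands terms so that Theorem \ref{thm:intro3} applies directly. Writing $Z(\gotM)=L(\gotM^{t})$ for the appropriate multisegment $\gotM^t$ under the relation between $Z$ and $L$ (the Zelevinski involution, as alluded to in Section \ref{sect-conseq}), one checks that ``$Z(\gotM_i)\times Z(\gotM_j)$ has unique irreducible quotient $Z(\gotM_i+\gotM_j)$'' is exactly the statement that the pair $\sigma_i:=Z(\gotM_i)$, $\sigma_j:=Z(\gotM_j)$ has a unique irreducible quotient whose Langlands parameter is the sum of the Langlands parameters of $\sigma_i$ and $\sigma_j$ --- here one uses that $Z(\gotM_i+\gotM_j)$ is always a constituent of multiplicity one in the product (as recalled just before Lemma \ref{mult-techlem}, with the analogous statement for $L$), and that the additivity of multisegments corresponds to additivity of Langlands parameters. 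Then Theorem \ref{thm:intro3} yields that $\sigma_1\times\cdots\times\sigma_t = Z(\gotM_1)\times\cdots\times Z(\gotM_t)$ has a unique irreducible quotient with Langlands parameter the sum, i.e.\ isomorphic to $Z(\gotM_1+\cdots+\gotM_t)$.

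Finally I would push this conclusion back through $U_{k_1+\cdots+k_t}$: the unique irreducible quotient of $Z(\gotM_1)\times\cdots\times Z(\gotM_t)$ is $Z(\gotM_1+\cdots+\gotM_t)$, hence the unique irreducible quotient of $\pi_1\times\cdots\times\pi_t$ is $U_{k_1+\cdots+k_t}(Z(\gotM_1+\cdots+\gotM_t))=\hat{Z}(\gotM_1+\cdots+\gotM_t)$, as desired. The main subtlety --- and the place where care is needed rather than genuine difficulty --- is the bookkeeping in the middle step: making sure that the ``sum of Langlands parameters'' formulation in Theorem \ref{thm:intro3} is matched correctly with the additivity of Zelevinski multisegments under the $Z$-parametrization, so that one is applying Theorem \ref{thm:intro3} to the right representations $\sigma_i$. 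Everything else is a transport-of-structure argument using that $U_n$ is an equivalence compatible with products; no hard analysis or combinatorics is needed at this stage, since all of that is already encapsulated in Theorem \ref{thm:intro3} (and ultimately in Hernandez's cyclicity theorem, which will be invoked to prove Theorem \ref{thm:intro3} / Theorem \ref{transl} itself in Section \ref{sect-qa}).
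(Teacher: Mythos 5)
Your proposal is circular and contains no actual proof of the statement. In this paper the logical flow runs in the opposite direction from the one you use: Theorem \ref{thm:intro3} (the $p$-adic statement about $\sigma_1\times\cdots\times\sigma_t$) is \emph{not} proven independently --- it is obtained by decomposing into Bernstein blocks, applying the equivalences $U_\Theta$, and then invoking Theorem \ref{transl}. So when you ``reduce'' Theorem \ref{transl} to Theorem \ref{thm:intro3} by transporting through $U_n$, you have merely restated the problem in an equivalent category and then cited a result whose only proof (in this paper) is the theorem you are trying to establish. Your own closing parenthetical concedes this: you note that Hernandez's cyclicity theorem ``will be invoked to prove Theorem \ref{thm:intro3} / Theorem \ref{transl} itself in Section \ref{sect-qa}.'' That invocation \emph{is} the proof; everything before it in your write-up is transport of structure that never touches the actual difficulty, namely why pairwise control of irreducible quotients for $t=2$ forces the conclusion for all $t$.

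The missing content is the passage to quantum affine algebras, which is where the paper actually does the work. Concretely: (i) one chooses $N\geq k_1+\cdots+k_t$ and applies the Chari--Pressley duality functor $\mathcal{F}_{k,N}:\mathcal{M}^{q^2}_k\to\mathcal{C}^q_N$, which for $k\leq N$ is fully faithful, monoidal in the sense of \eqref{monoi}, and matches $\hat{Z}(\iota_N(P))$ with $V(P)$ compatibly with the monoid structures (Proposition \ref{irr-corr}); (ii) Proposition \ref{cyclic} identifies the condition ``$V(P_1)\otimes\cdots\otimes V(P_k)$ has a unique irreducible quotient isomorphic to $V(P_1\cdots P_k)$'' with cyclicity of the tuple, via the one-dimensionality of the top weight space $M_{\omega(P_1)+\cdots+\omega(P_k)}$; and (iii) Hernandez's Theorem \ref{hern-thm} upgrades pairwise cyclicity to cyclicity of the whole tuple. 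None of steps (i)--(iii) appears in your proposal except as an acknowledgment that they exist elsewhere. Separately, your intermediate reformulation in terms of Langlands parameters is unnecessary for Theorem \ref{transl} as stated (the hypothesis and conclusion are already phrased via $\hat{Z}$, and the paper handles the $Z$-versus-$L$ passage afterwards in Corollary \ref{cor-generalhern} using the Iwahori--Matsumoto involution), so that bookkeeping, while not wrong, does not advance the argument either.
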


We would like to extend the statement of the theorem above slightly.

For that purpose let us recall that each algebra $H(n,q)$ is equipped with the \textit{Iwahori-Matsumoto involutive automorphism} $\theta_n$ (see \cite[I.6]{mw-zel}). It gives rise to an involutive auto-equivalence of $\mathcal{M}_n^q$. In order to ease notation, we will simply write $\theta$ for all these involutions.

It is known \cite[Lemme I.7.1]{mw-zel}\footnote{There is an obvious typo in the statement of the lemma in that source.} that the Iwahori-Matsumoto involution is compatible with the induction product, in the following sense. For all representations $\pi_1,\ldots,\pi_t$ in   $\mathcal{M}_{k_1}^q,\ldots,\mathcal{M}_{k_t}^q$, respectively, we have
\[
\theta(\pi_1\times\cdots\times \pi_t)\cong \theta(\pi_t)\times \cdots\times \theta(\pi_1)\;.
\]

When restricting $\theta$ to irreducible representations, we obtain \cite[Proposition I.7.3]{mw-zel} what is known as the \textit{Zelevinski involution} in the context of $p$-adic groups, that is,
\[
\theta(\hat{Z}(\gotM))\cong \hat{L}(\gotM)\;,
\]
for all $\gotM \in\multi_0$.

\begin{corollary}\label{cor-generalhern}
The statement of Theorem \ref{transl} remains valid, when $\hat{Z}$ is replaced with $\hat{L}$. In addition, ``quotient" may be replaced with ``sub-representation".
\end{corollary}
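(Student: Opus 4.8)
The plan is to deduce Corollary~\ref{cor-generalhern} from Theorem~\ref{transl} purely formally, using the Iwahori--Matsumoto involution $\theta$ as a bridge between the $\hat Z$-parametrization and the $\hat L$-parametrization, and then using the Gelfand--Kazhdan-type contragredient symmetry to swap ``quotient'' with ``sub-representation''. No new representation-theoretic input should be needed; the content is entirely in Theorem~\ref{transl} and the two recalled compatibility properties of $\theta$.

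First I would handle the replacement of $\hat Z$ by $\hat L$. Suppose $\sigma_i = \hat L(\gotM_i)$ for $i=1,\dots,t$, and assume that for all $i<j$ the product $\sigma_i\times\sigma_j$ has a unique irreducible quotient, equal to $\hat L(\gotM_i+\gotM_j)$. Apply $\theta$: since $\theta(\hat L(\gotM))\cong\hat Z(\gotM)$ and $\theta$ reverses products, $\theta(\sigma_i\times\sigma_j)\cong\theta(\sigma_j)\times\theta(\sigma_i) = \hat Z(\gotM_j)\times\hat Z(\gotM_i)$. Because $\theta$ is an exact auto-equivalence of $\mathcal{M}^{q_F}_n$, it sends ``unique irreducible quotient'' to ``unique irreducible quotient''; hence $\hat Z(\gotM_j)\times\hat Z(\gotM_i)$ has unique irreducible quotient $\theta(\hat L(\gotM_i+\gotM_j))=\hat Z(\gotM_i+\gotM_j)$, which (since the multisegment sum is symmetric) is exactly the hypothesis of Theorem~\ref{transl} applied to the list $\hat Z(\gotM_t),\dots,\hat Z(\gotM_1)$. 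Theorem~\ref{transl} then yields that $\hat Z(\gotM_t)\times\cdots\times\hat Z(\gotM_1)$ has a unique irreducible quotient parameterized by $\hat Z(\gotM_1+\cdots+\gotM_t)$. Applying $\theta$ once more and again reversing the product, $\theta(\hat Z(\gotM_t)\times\cdots\times\hat Z(\gotM_1))\cong \hat L(\gotM_1)\times\cdots\times\hat L(\gotM_t) = \sigma_1\times\cdots\times\sigma_t$, and its unique irreducible quotient is $\theta(\hat Z(\gotM_1+\cdots+\gotM_t)) = \hat L(\gotM_1+\cdots+\gotM_t)$, as desired.

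For the ``sub-representation'' variant, the cleanest route is to invoke a contragredient (or duality) functor on $\mathcal{M}^{q_F}_n$ that reverses the product and interchanges quotients with sub-representations, the affine Hecke algebra analogue of $\overline\eta$ from Section~\ref{sect-gk}; since the paper's $U_n$ equivalences intertwine parabolic induction, and the Gelfand--Kazhdan involution fixes irreducibles and reverses products on the $p$-adic side, transporting $\overline\eta$ through $U_n$ gives a contravariant exact involution $D$ on $\bigcup_n\mathcal{M}^{q_F}_n$ with $D(\pi_1\times\cdots\times\pi_t)\cong D(\pi_t)\times\cdots\times D(\pi_1)$ and $D$ fixing each irreducible. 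Then $D$ turns a unique irreducible quotient of $\pi_{\omega(1)}\times\cdots$ into a unique irreducible sub-representation of the reversed product, and the statement about the Langlands/Zelevinski parameter of that sub-representation is unchanged because $D$ fixes irreducibles; combining $D$ with $\theta$ as above gives all four combinations ($\hat Z$ or $\hat L$; quotient or sub). The main thing to be careful about --- and the only real obstacle --- is bookkeeping the order reversals: each application of $\theta$ or $D$ reverses the list and, crucially, also swaps every hypothesis pair $(i,j)$ with $(j,i)$, so one must check that the hypotheses of Theorem~\ref{transl} are genuinely symmetric under this reversal (they are, since both ``unique irreducible quotient'' for $\pi_i\times\pi_j$ versus $\pi_j\times\pi_i$ and the target parameter $\gotM_i+\gotM_j$ are interchanged consistently). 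Once that is laid out, the corollary follows with no further computation.
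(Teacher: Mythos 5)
Your argument is correct and is essentially the paper's own proof: the paper deduces the corollary by applying the Iwahori--Matsumoto involution $\theta$ (for the $\hat Z\leftrightarrow\hat L$ swap) together with Proposition \ref{GK} transported through the equivalences $\{U_n\}$ (for the quotient$\leftrightarrow$sub-representation swap), exactly as you do. Your careful bookkeeping of the order reversals and the symmetry of the pairwise hypotheses is the only point the paper leaves implicit, and you handle it correctly.
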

\begin{proof}
It follows from an application of the functor $\theta$ and Proposition \ref{GK}, which remains valid for representations of affine Hecke algebras through the equivalences $\{U_n\}$.
\end{proof}

\section{Classes of irreducible representations}\label{sect-work}

We would like to study certain classes of representations in $\irr$ which naturally occur in the derivatives of unitarizable representations.

\subsection{Quasi-Speh representations}

We will first deal with a subclass of ladder representations, which we will call \textit{quasi-Speh} representations. These are parameterized by integers $a,b\in \mathbb{Z}_{>0},\, c\in \mathbb{Z}$, so that $0\leq c\leq a$, and a representation $\rho\in \irr^u\,\cap\, \mathcal{C}$. For such data, we define the multisegment
\[
\gotM_\rho^{a,b,c} = \left[ \frac{b+a}2-c, \frac{b+a}2-1\right]_\rho + \sum_{i=2}^b \left[ \frac{b-a}2+1-i, \frac{b+a}2-i\right]_\rho\in \multi_\rho\;.
\]
We then set $\pi_\rho^{a,b,c}:= L(\gotM^{a,b,c}_\rho)$ to be a quasi-Speh representation.

Note, that for $a=c$, these are the usual Speh representations $\pi^{a,b}_\rho = \pi^{a,b,a}_\rho $ defined in Section \ref{sect-tadic}. We also note that for $b>1$ and all $a$, we have $\pi_\rho^{a,b,0} = \pi_\rho^{a,b-1}\nu^{-1/2}$.

The following identities make the class of quasi-Speh representations relevant to our discussion. See \cite[Section 5.4]{LM} for the proof, which is attributed to Tadic.

\begin{proposition}\label{deriv-speh}
Let $\rho\in \irr^u\,\cap\, \mathcal{C}$ be a given representations of $G_d$. Let $a,b\in \mathbb{Z}_{>0}$ be given.

Then, the formula
\[
(\pi_{\rho}^{a,b})^{(i)} \cong \left\{\begin{array}{cc}
                                        \pi_{\rho}^{a,b,a-k} & i = kd,\;0\leq k\leq a \\
                                         0 & \mbox{\small{otherwise}}
                                      \end{array} \right.
\]
gives the derivatives of the Speh representation $\pi^{a,b}_{\rho}$.
\end{proposition}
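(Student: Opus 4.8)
The plan is an induction on $b$, with $a\ge 1$ and the unitary supercuspidal $\rho\in\irr^u\cap\mathcal C$ (a representation of $G_d$) fixed; two auxiliary vanishing statements are isolated at the outset. Since every irreducible subquotient of $\mathbf r_{(n-i,i)}(\pi_\rho^{a,b})$ has supercuspidal support a sub-multiset of $\supp(\pi_\rho^{a,b})$, which consists of unramified twists of the single representation $\rho$ of $G_d$, the $G_i$-factor is forced to vanish unless $d\mid i$; hence $(\pi_\rho^{a,b})^{(i)}=0$ for $d\nmid i$. The vanishing $(\pi_\rho^{a,b})^{(i)}=0$ for $i>ad$ I would take from the known description of the Jacquet modules of ladder representations, i.e. from the fact that the highest nonzero derivative of $\pi_\rho^{a,b}$ has order $ad$. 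It remains to identify $(\pi_\rho^{a,b})^{(kd)}$ for $0\le k\le a$.

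The base case $b=1$ is classical: $\pi_\rho^{a,1}=L\big([\tfrac{1-a}2,\tfrac{a-1}2]_\rho\big)$ is a generalized Steinberg, whose derivatives are $(\pi_\rho^{a,1})^{(kd)}=L\big([\tfrac{1-a}2+k,\tfrac{a-1}2]_\rho\big)=\pi_\rho^{a,1,a-k}$. For the inductive step I would use $\gotM_\rho^{a,b}=\Delta_1+\gotM_\rho^{a,b-1}\nu^{-1/2}$, where $\Delta_1=[\tfrac{b-a}2,\tfrac{b+a}2-1]_\rho$ is the $\prec$-maximal segment of the ladder. Grouping the Langlands standard module $L(\Delta_b)\times\cdots\times L(\Delta_1)$ as $\big[L(\Delta_b)\times\cdots\times L(\Delta_2)\big]\times L(\Delta_1)$, exactness of $\times$ and the fact that the bracketed factor is the standard module of $\gotM_\rho^{a,b-1}\nu^{-1/2}$ show it surjects onto $\nu^{-1/2}\pi_\rho^{a,b-1}\times L(\Delta_1)$; since a quotient of a module with a unique irreducible quotient inherits it, $\pi_\rho^{a,b}$ is the unique irreducible quotient of $\nu^{-1/2}\pi_\rho^{a,b-1}\times L(\Delta_1)$, and dually (Proposition \ref{GK}) the unique irreducible subrepresentation of $L(\Delta_1)\times\nu^{-1/2}\pi_\rho^{a,b-1}$. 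As the derivative functors are exact, applying $(-)^{(kd)}$ gives a surjection $\big(\nu^{-1/2}\pi_\rho^{a,b-1}\times L(\Delta_1)\big)^{(kd)}\twoheadrightarrow(\pi_\rho^{a,b})^{(kd)}$ and an injection $(\pi_\rho^{a,b})^{(kd)}\hookrightarrow\big(L(\Delta_1)\times\nu^{-1/2}\pi_\rho^{a,b-1}\big)^{(kd)}$.

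Now I would unwind both bounds with the Leibniz rule (Proposition \ref{leibn}(1)), the inductive hypothesis, and the classical derivatives of $L(\Delta_1)$: both sides acquire filtrations whose subquotients are $\nu^{-1/2}\pi_\rho^{a,b-1,a-k_1}\times L(\Delta_1^{[k_2]})$ (resp. the reversed product) with $k_1+k_2=k$, where $\Delta_1^{[k_2]}$ is $\Delta_1$ with its $k_2$ leftmost entries deleted (the half-integral twists coming from the parabolic moduli, tracked explicitly, are what produce the $\nu^{\pm1/2}$ in the final answer). A supercuspidal-support computation shows that for $k_1\ge1$ the piece $\nu^{-1/2}\pi_\rho^{a,b-1,a-k_1}\times L(\Delta_1^{[k_2]})$ deletes the character $\rho\nu^{(b-a)/2-1}$, absent from the leftmost $k$ entries of $\Delta_1$, so it lies outside the Bernstein block $I:=\supp(\pi_\rho^{a,b,a-k})$; projecting onto block $I$ collapses each filtration to its single $k_1=0$ summand. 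Using in addition that the ladder Jacquet structure forces the peeled-off $G_{kd}$-factor to have support exactly $\{\rho\nu^{(b-a)/2},\dots,\rho\nu^{(b-a)/2+k-1}\}$ --- so that $(\pi_\rho^{a,b})^{(kd)}$ lies entirely in block $I$ --- one concludes that $(\pi_\rho^{a,b})^{(kd)}$ is at once a quotient of $\nu^{-1/2}\pi_\rho^{a,b-1}\times L(\Delta_1^{[k]})$ and a subrepresentation of $L(\Delta_1^{[k]})\times\nu^{-1/2}\pi_\rho^{a,b-1}$. Since $\gotM_\rho^{a,b,a-k}=\gotM_\rho^{a,b-1}\nu^{-1/2}+\Delta_1^{[k]}$ and $\{\Delta_b,\dots,\Delta_2,\Delta_1^{[k]}\}$ is again a ladder for $k<a$ (the case $k=a$ being the identity $\pi_\rho^{a,b,0}=\pi_\rho^{a,b-1}\nu^{-1/2}$), the same grouping-of-the-standard-module argument identifies $\pi_\rho^{a,b,a-k}=L(\gotM_\rho^{a,b,a-k})$ as the cosocle of the first product and the socle of the second. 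Hence $(\pi_\rho^{a,b})^{(kd)}$ has $\pi_\rho^{a,b,a-k}$ as its unique irreducible quotient and as its unique irreducible subrepresentation; and since $L(\gotM_\rho^{a,b-1}\nu^{-1/2}+\Delta_1^{[k]})$ occurs with multiplicity one in $\nu^{-1/2}\pi_\rho^{a,b-1}\times L(\Delta_1^{[k]})$, a finite-length module containing it as both its unique sub and its unique quotient must equal it, giving $(\pi_\rho^{a,b})^{(kd)}\cong\pi_\rho^{a,b,a-k}$.

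The main obstacle is precisely this bookkeeping in the inductive step: pinning down, via the ladder Jacquet-module structure and (where the relevant product is genuinely reducible) the Lapid--M\'{i}nguez criterion, that $(\pi_\rho^{a,b})^{(kd)}$ is concentrated in the block $I$ and that the auxiliary products of ladder representations have the expected cosocle. An alternative route, which bypasses the case analysis but uses input beyond the present excerpt, is to begin from Tadi\'{c}'s determinantal formula expressing $[\pi_\rho^{a,b}]$ in the Grothendieck ring as a determinant of classes of segment representations, apply the total-derivative ring homomorphism (Leibniz), expand, and observe that elementary row and column operations leave a single term in each degree; multiplicity-freeness of the derivatives of a ladder then upgrades that Grothendieck-group identity to the claimed isomorphism.
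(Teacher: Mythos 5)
The paper offers no argument for this proposition beyond a citation: it is quoted from \cite[Section 5.4]{LM} and attributed to Tadi\'{c}. So you are necessarily taking a different route, and your architecture --- sandwich $(\pi_\rho^{a,b})^{(kd)}$ between a product whose cosocle is $\pi_\rho^{a,b,a-k}$ and one whose socle is $\pi_\rho^{a,b,a-k}$, discard the unwanted Leibniz constituents by comparing supercuspidal supports, and finish with the multiplicity-one observation --- is sound in outline. The supporting computations are correct: $\gotM_\rho^{a,b}=\Delta_1+\gotM_\rho^{a,b-1}\nu^{-1/2}$; the pieces $\nu^{-1/2}\pi_\rho^{a,b-1,a-k_1}\times L(\Delta_1^{[k_2]})$ with $k_1\geq1$ do lose a copy of $\rho\nu^{(b-a)/2-1}$ and hence lie outside the Bernstein component of $\pi_\rho^{a,b,a-k}$; $\Delta_b\prec\cdots\prec\Delta_2\prec\Delta_1^{[k]}$ is again a ladder for $k<a$; and a finite-length module whose socle and cosocle both equal an irreducible $X$ occurring with multiplicity one is $X$. (Two small slips: your standard-module orderings are reversed relative to the Langlands-quotient convention, so sub and quotient should be swapped throughout; and Proposition \ref{leibn} involves no extra twists from parabolic moduli --- the $\nu^{-1/2}$ in the answer comes entirely from the decomposition of $\gotM_\rho^{a,b}$.)

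There are, however, two genuine gaps. First, the crux of the whole argument is the assertion that $(\pi_\rho^{a,b})^{(kd)}$ is concentrated in the single Bernstein component of $\pi_\rho^{a,b,a-k}$, i.e.\ that every constituent $\tau_1\otimes\tau_2$ of $\mathbf{r}_{(n-kd,kd)}(\pi_\rho^{a,b})$ with $\tau_2$ generic has $\supp(\tau_2)=\{\rho\nu^{(b-a)/2},\dots,\rho\nu^{(b-a)/2+k-1}\}$. Without this, your sandwich only controls the block-$I$ component of the derivative, and the components supported in the blocks of the $k_1\geq1$ pieces (which are genuinely present in both bounding products) are not ruled out. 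You justify this step by appeal to ``the ladder Jacquet structure,'' but that description of Jacquet modules of ladder/Speh representations is precisely the external result (Tadi\'{c}, \cite[Section 5.4]{LM}, Kret--Lapid) that the proposition is quoted from; granting it makes the induction unnecessary, and withholding it leaves the step unproved. Second, the sandwich argument shows that $(\pi_\rho^{a,b})^{(kd)}$, \emph{if nonzero}, equals $\pi_\rho^{a,b,a-k}$; the zero module is also simultaneously a quotient and a subobject of the two bounding products, and nothing in your induction forces $(\pi_\rho^{a,b})^{(kd)}\neq0$ for $0<k<a$ (the cases $k=0$ and $k=a$ being the representation itself and the highest derivative). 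Both gaps are filled at once by the alternative you mention in your last paragraph --- Tadi\'{c}'s determinantal formula combined with the fact that the total derivative is a ring homomorphism on the Grothendieck ring --- which is essentially how the cited source proceeds, and which I would recommend adopting outright rather than as a fallback.
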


%For a given $\rho\in\irr^u\,\cap\, \mathcal{C}$,
Let us define a preorder on the class of quasi-Speh representations. We will write
\[
\pi_\rho^{a_1,b_1,c_1} \preceq \pi_\rho^{a_2,b_2,c_2}\;,
\]
if $a_1+b_1 < a_2+b_2$, or if $a_1+b_1 = a_2+b_2$ and $a_1\leq a_2$.

For any given $\rho\in\irr^u\,\cap\, \mathcal{C}$, the restriction of $\preceq$ to the collection $\{\pi^{a,b,c}_\rho\}_{a,b,c}$ gives a total preorder.

\begin{proposition}\label{replace}
If $\pi_1= L(\gotM_1),\pi_2= L(\gotM_2)$ are two quasi-Speh representations, which satisfy $\pi_2\preceq \pi_1$, then $\pi_1\times \pi_2$ has a unique irreducible quotient which is given by $L(\gotM_1+\gotM_2)$.

\end{proposition}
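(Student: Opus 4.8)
The plan is to verify the Lapid-Mínguez criterion (Theorem~\ref{lap-min}) for the pair of ladder multisegments $\gotM_1, \gotM_2$, after reducing to a convenient normalization. Since both representations lie in $\multi_\rho$ for a fixed $\rho$, and the product is irreducible unless the supports interact, I would first twist so that $\gotM_1 = \gotM_\rho^{a_1,b_1,c_1}$ and $\gotM_2 = \gotM_\rho^{a_2,b_2,c_2}$ are in their standard positions; note that $Z(\gotM_1 + \gotM_2)$ is automatically a subquotient of $Z(\gotM_1)\times Z(\gotM_2)$, and $L(\gotM_1+\gotM_2)$ is the unique irreducible quotient of $L(\gotM_1)\times L(\gotM_2)$ precisely when it is the unique such quotient — which by Gelfand-Kazhdan (Proposition~\ref{GK}), applying $\overline\eta$, is equivalent to $Z(\eta(\gotM_1)+\eta(\gotM_2))$ being the unique irreducible subrepresentation of $Z(\eta\gotM_2)\times Z(\eta\gotM_1)$. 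So it suffices to apply Theorem~\ref{lap-min} to the appropriately dualized ladder multisegments.

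Next I would set up the bipartite graph explicitly. Write $\gotM_1 = \Delta_1 + \cdots + \Delta_{b_1}$ and $\gotM_2 = \Delta'_1 + \cdots + \Delta'_{b_2}$ with the descending-precedence ordering from the quasi-Speh definition. The segments of a quasi-Speh multisegment have the shape: all but the top segment are the ``full'' Speh-type segments $[\frac{b-a}{2}+1-i,\frac{b+a}{2}-i]_\rho$ of length $a$, while the top segment is a truncated version $[\frac{b+a}{2}-c,\frac{b+a}{2}-1]_\rho$ of length $c$. The key numerical input is the hypothesis $\pi_2 \preceq \pi_1$, i.e. $a_2+b_2 < a_1+b_1$, or $a_2+b_2 = a_1+b_1$ with $a_2 \le a_1$. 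In the first case the right endpoints of $\gotM_1$ reach strictly higher than those of $\gotM_2$, which forces the set $X_{\gotM_1;\gotM_2}$ (pairs where $\Delta_i \prec \Delta'_j$) to be small or empty and makes a matching function easy to exhibit. In the boundary case $a_1+b_1 = a_2+b_2$ the two ladders have the same total ``length'' and the condition $a_2 \le a_1$ controls how the truncation of the top segment interacts; I would argue that the injective matching $f$ can be built by shifting each element of $X$ along the $j \mapsto j+1$ edge of the graph, and that the constraint $0 \le c_i \le a_i$ together with $a_2 \le a_1$ guarantees the target lies in $Y_{\gotM_1;\gotM_2}$ and that $f$ stays injective.

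I expect the main obstacle to be the careful bookkeeping of endpoints in the boundary case $a_1+b_1=a_2+b_2$, where the truncated top segments of both quasi-Speh multisegments can cause the naive matching to fail at the ``edge'' of the ladder; one must check that the combinatorial constraints $c_i \le a_i$ and $a_2\le a_1$ conspire to keep $X_{\gotM_1;\gotM_2}$ small enough, or provide the extra room in $Y_{\gotM_1;\gotM_2}$. A cleaner alternative, which I would pursue if the direct verification becomes unwieldy, is to use the closure of the ladder class under the operations relating quasi-Speh to genuine Speh representations: by the remark that $\pi_\rho^{a,b,0} = \pi_\rho^{a,b-1}\nu^{-1/2}$ and Proposition~\ref{deriv-speh}, one can sometimes realize a quasi-Speh representation as a derivative of a Speh representation, and then invoke known irreducibility/cyclicity results for products of Speh representations together with the exactness properties of derivatives. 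Either way, once the matching function is produced, Theorem~\ref{lap-min} (in its dualized form) yields that $L(\gotM_1+\gotM_2)$ is the unique irreducible quotient of $L(\gotM_1)\times L(\gotM_2)$, which is the claim; uniqueness of the quotient also follows since the co-socle is then forced to be a single copy of $L(\gotM_1+\gotM_2)$, the multiplicity-one statement for $L(\gotM_1+\gotM_2)$ in the product being standard.
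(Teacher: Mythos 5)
Your reduction is the same as the paper's: twist/dualize via the Gelfand--Kazhdan and Zelevinski involutions so that the claim becomes ``$Z(\gotM_1+\gotM_2)$ is the socle of $Z(\gotM_1)\times Z(\gotM_2)$'', and then verify the Lapid--M\'{i}nguez matching criterion of Theorem \ref{lap-min}, with the case split governed by $\pi_2\preceq\pi_1$. Up to that point the proposal is sound.

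The gap is in the combinatorial core: you never actually produce the injective matching $f:X_{\gotM_1;\gotM_2}\to Y_{\gotM_1;\gotM_2}$, and the one concrete recipe you offer --- send each $(i,j)\in X$ along the $j\mapsto j+1$ edge --- fails in general. First, elements of $X$ in the last column $j=k_2$ (in particular the whole of $X$ when $b_2=1$) have no such target; already for $\gotM^{1,5,c_1}_\rho$ against $\gotM^{1,1,1}_\rho$ one finds $X=\{(4,1)\}$ and the only admissible match is $(3,1)$, i.e.\ the \emph{other} edge type $i\mapsto i-1$. Second, even when $(i,j+1)$ exists, membership in $Y$ requires $\Delta_i\prec\overrightarrow{\Delta'_{j+1}}$, and for $j=1$ the truncation of the top segment of $\gotM_2$ (small $c_2$) makes $b(\overrightarrow{\Delta'_2})-1$ strictly smaller than $b(\Delta'_1)-1$, so $(i,1)\in X$ does not imply $(i,2)\in Y$. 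Also, your claim that in the strict case $a_2+b_2<a_1+b_1$ the set $X$ is ``small or empty'' is false (see the example above). The idea that actually closes these gaps in the paper is absent from your proposal: since $\pi^{a_1,b_1}_\rho\times\pi^{a_2,b_2}_\rho$ is irreducible (product of unitarizable representations), the \emph{converse} direction of Theorem \ref{lap-min} hands you a matching function $g$ for the completed Speh multisegments $\hat{\gotM}_1,\hat{\gotM}_2$; one then checks that $g$ restricted to $(X\cap\hat X)\setminus K$ still lands in $Y$, treats the diagonal-adjacent set $K=\{(i,i-1)\}$ by $f(i,i-1)=(i,i)$ (which is where $\pi_2\preceq\pi_1$ enters, forcing $a_1+b_1=a_2+b_2$ and $b_1\le b_2$ whenever $X\cap K\ne\emptyset$), and extends over $X\setminus(\hat X\cup K)\subset[3,\ldots,k_1]\times\{1\}$ by $f(i,1)=(i-1,1)$. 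Your alternative route via derivatives of Speh representations and exactness does not obviously yield cosocle information about the product of the derivatives (the Leibniz filtration controls the derivative of the product, not the product of the derivatives), so as written it does not substitute for this argument.
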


\begin{proof}
Suppose that $\pi_1 = \pi_\rho^{a_1,b_1,c_1}$ and $\pi_2 = \pi_\rho^{a_2,b_2,c_2}$. Let us denote
\[
\gotM_1 = \gotM_\rho^{a_1,b_1,c_1} = \Delta_1 + \ldots + \Delta_{k_1},\quad \gotM_2 = \gotM_\rho^{a_2,b_2,c_2} = \Delta'_1 + \ldots + \Delta'_{k_2}\;,
\]
so that $\Delta_{k_1} \prec \ldots \prec \Delta_1$ and $\Delta'_{k_2} \prec \ldots \prec \Delta'_1$ are the defining segments. When $c_1$ (resp. $c_2$) equal to zero, the condition $\Delta_2 \prec \Delta_1$ (resp. $\Delta'_2 \prec \Delta'_1$) is waived, but we will still write $\Delta_1$ (resp. $\Delta'_1$), while referring to the empty segment.

By same considerations as in the proof of Corollary \ref{cor-generalhern}, it is enough to prove that $Z(\gotM_1 + \gotM_2)$ is the socle of $Z(\gotM_1)\times Z(\gotM_2)$. To show that, we will apply the Lapid-M\'{i}nguez criterion of Theorem \ref{lap-min}.

Recall the sets $X=X_{\gotM_1;\gotM_2}, Y=Y_{\gotM_1;\gotM_2}$ and the relation $\leftrightarrow$, as they were defined in Section \ref{sect-lm}.

We need to show there is an injective function $f:X\to Y$, which satisfies $x\leftrightarrow f(x)$, for all $x\in X$.

Let us write the set of indices
\[
K = \{(i,i-1)\in [1,\ldots,k_1]\times [1,\ldots,k_2]\}\;.
\]
Note, that if $X\cap K\neq \emptyset$, we have $e_\rho(\Delta_{i_1})< e_\rho(\Delta'_{i_1-1}) = e_\rho(\Delta'_{i_1})+1$ for some index $i_1$. It follows that $b_1+a_1\leq b_2+a_2$. Yet, since $\pi_2\preceq \pi_1$ holds, we must have $b_1+a_1= b_2+a_2$ and $a_2 \leq a_1$. This implies $b_1\leq b_2$ and that $(i,i)\in Y$ holds, for all $2\leq i\leq k_1$.

For every $(i,i-1)\in X\cap K$, let us set $f(i,i-1) = (i,i)$.

Now, let us consider the Speh representations given as $\hat{\pi}_1 = \pi_\rho^{a_1,b_1}$ and $\hat{\pi}_2 = \pi_\rho^{a_2,b_2}$. Then,
\[
\hat{\pi}_1 =L(\hat{\gotM}_1)= L\left( \sum_{i=1}^{k_1} \hat{\Delta}_i\right) ,\quad \hat{\pi}_2 =  L(\hat{\gotM}_2)=L\left( \sum_{i=1}^{k_2} \hat{\Delta}'_i\right)\;,
\]
where $\hat{\Delta}_i = \Delta_i$ and $\hat{\Delta}'_i = \Delta'_i$, for all $2\leq i$, while $b_\rho(\Delta_1)\geq  b_\rho(\hat{\Delta}_1)$, $b_\rho(\Delta'_1)\geq b_\rho(\hat{\Delta}'_1)$, $e(\Delta_1)=e(\hat{\Delta}_1)$ and $e(\Delta'_1)=e(\hat{\Delta}'_1)$.

It is known that $\hat{\pi}_1\times \hat{\pi}_2$ is irreducible, as a product of unitarizable irreducible representations. Hence, we can apply Theorem \ref{lap-min} to obtain a matching function $g:\hat{X}\to \hat{Y}$ for the restricted bipartite graph $(\hat{X} = X_{\hat{\gotM}_1;\gotM_2}, \hat{Y} = Y_{\hat{\gotM}_1;\hat{\gotM}_2}, \leftrightarrow)$.

Note, that since  $\pi_2\preceq \pi_1$, we know that $e_\rho(\Delta_1) = (b_1+a_1)/2-1$ is no smaller than $e_\rho(\Delta'_j)$, for all $1\leq j\leq k_2$. Thus, $X, \hat{X}\subset [2,\ldots, k_1]\times [1,\ldots, k_2]$.

Clearly, we also have
\[
X\cap \left([2,\ldots, k_1]\times [2,\ldots, k_2]\right) = \hat{X} \cap \left([2,\ldots, k_1]\times [2,\ldots, k_2]\right)\;,
\]
\[
Y\cap \left([2,\ldots, k_1]\times [2,\ldots, k_2]\right) = \hat{Y} \cap \left([2,\ldots, k_1]\times [2,\ldots, k_2]\right)\;.
\]

Suppose that $(i,j)\in (X\cap \hat{X})\setminus K$, and denote $(i',j') = g(i,j)\in \hat{Y}$. In case $j'= j+1$, we see that $i',j'\geq 2$, which implies that $(i',j')\in Y$. Otherwise, $(i',j') = (i-1, j)$. In case that $i>2$, we have $\Delta_{i-1} = \overrightarrow{\Delta_i} \prec \overrightarrow{\Delta'_j}$, since $(i,j)\in X$. Thus, again we have $(i',j')\in Y$.

As for the case that $i=2$, the inclusion $(2,j)\in X$ would have implied that
\[
e_\rho(\Delta'_1)\geq e_\rho(\Delta'_j)\geq e_\rho(\Delta_2)+1 = \frac{b_1+a_1}2 -1\;.
\]
On the other hand, it follows from $\pi_2\preceq \pi_1$ that $e_\rho(\Delta'_1) \leq \frac{b_1+a_1}2 -1$. Hence, $j=1$ and $(i,j)\in K$ would have given a contradiction.

%We are left with the case that $i=2$, $i'=1$ and $j'=j$. Since $(2,j)\in X$, we see that $e(\Delta'_1)\geq e(\Delta'_j)\geq e(\Delta_2)+1 = \frac{b_1+a_1}2 -1$. On the other hand, it follows from $\pi_2\preceq \pi_1$ that $e(\Delta'_1) \leq \frac{b_1+a_1}2 -1$. Hence, $j=1$ and $b_1+a_1 = b_2+a_2$, which also implies $c_2\leq c_1$.

%Since $(2,1)\in X$, we know that $\Delta'_1$ is a non-empty segment, which implies that $0<c_2$ and that $\Delta_1$ is non-empty as well. We see that in this final case $\Delta_1 \prec \overrightarrow{\Delta'_1}$, that is, $(i',j') = (1,1)\in Y$.

Having established that $g((X\cap \hat{X})\setminus K)$ is always contained in $Y$, we can define $f= g$ on $(X\cap \hat{X})\setminus K$. Injectivity is not interfered with the definition of $f|_K$, because when $x\in X$ satisfies $x\leftrightarrow (i,i)$ for an index $i$, we clearly must have $x\in K$.

We are left with the task of extending $f$ injectively to
\[
X\setminus(\hat{X}\cup K)\subset [3,\ldots,k_1]\times\{1\}\;.
\]

Suppose that $(i,1)\in X\setminus(\hat{X}\cup K)$. Since $i-1>1$, we again have $\Delta_{i-1} = \overrightarrow{\Delta_i} \prec \overrightarrow{\Delta'_1}$, which gives $(i-1,1)\in Y$. Moreover, the same argument shows that $(i,1)\not\in \hat{X}$ implies $(i-1,1)\not\in \hat{Y}$. Hence, we can extend $f$ by setting $f(i,1) = (i-1,1)$ without harming injectivity.

%Finally, suppose that $(2,1)\in X\setminus\hat{X}$. By similar arguments to before, we see that $b_1+a_1 = b_2+a_2$, $c_2\leq c_1$ and $(1,1)\in Y$. Since $(2,1)$ is the only element in $X$ for which $(2,1)\leftrightarrow (1,1)$ holds, we can set $f(2,1)=(1,1)$ while maintaining injectivity.

\end{proof}

\subsection{Quasi-Arthur-type representations}

We say that $\pi\in\irr$ is \textit{quasi-Arthur-type}, if it has the form
\[
\pi = L\left( \gotM_\rho^{a_1,b_1,c_1} + \ldots + \gotM_\rho^{a_k,b_k,c_k}\right)\;,
\]
for some integers $\{a_i,b_i,c_i\}_{i=1}^k$ and $\rho\in \irr^u\,\cap\, \mathcal{C}$.

The significance of quasi-Arthur-type representations appears through the following corollary of previous discussions.

\begin{proposition}\label{main-prop}
Let $\rho\in \irr^u\,\cap\, \mathcal{C}$ be fixed.
Suppose that
\[
\pi_\rho^{a_k,b_k,c_k}\preceq \ldots\preceq \pi_\rho^{a_1,b_1,c_1}
\]
are given quasi-Speh representations, for some integers $\{a_i,b_i,c_i\}$.

Then, the product
\[
\pi_\rho^{a_1,b_1,c_1}\times  \cdots \times \pi_\rho^{a_k,b_k,c_k}
\]
has a unique irreducible quotient, whose isomorphism class is given by the quasi-Arthur-type representation
\[
L\left( \gotM_\rho^{a_1,b_1,c_1} + \ldots + \gotM_\rho^{a_k,b_k,c_k}\right)\;.
\]
\end{proposition}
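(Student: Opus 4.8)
The plan is to deduce this proposition by combining the two-representation input (Proposition~\ref{replace}) with the multi-representation bootstrap provided by Corollary~\ref{cor-generalhern}, using the block equivalences $\{U_\Theta\}$ to transport the statement from $p$-adic groups into affine Hecke algebras. First I would observe that, since all the quasi-Speh representations $\pi_\rho^{a_i,b_i,c_i}$ have supercuspidal support contained in $\multi_\rho$, they all live in the sequence of simple blocks $\{\mathfrak{R}(\Theta(\rho,n))\}_n$; hence their product does too, and by the equivalence $U_{\Theta(\rho,\cdot)}$ it suffices to prove the corresponding statement for the images $\hat{Z}(\gotM_\rho^{a_i,b_i,c_i})$ (or rather $\hat{L}$, see below) inside $\bigcup_n \mathcal{M}_{k}^{q_E}$ for the appropriate auxiliary field $E$. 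This reduction uses the compatibility \eqref{monoidU} of the $U_\Theta$ with parabolic induction, together with Proposition~\ref{equiv-block} which guarantees these equivalences respect the multisegment parametrization.

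Next I would verify the hypothesis of Corollary~\ref{cor-generalhern} for each pair $i<j$. Because $\preceq$ restricts to a total preorder on $\{\pi^{a,b,c}_\rho\}$ and the representations are listed in $\preceq$-decreasing order, we have $\pi_\rho^{a_j,b_j,c_j}\preceq \pi_\rho^{a_i,b_i,c_i}$ for every $i<j$; Proposition~\ref{replace} then tells us that $\pi_\rho^{a_i,b_i,c_i}\times \pi_\rho^{a_j,b_j,c_j}$ has a unique irreducible quotient, namely $L(\gotM_\rho^{a_i,b_i,c_i}+\gotM_\rho^{a_j,b_j,c_j})$, whose multisegment is the sum of the two pieces (this is exactly the Langlands-parameter-additivity condition, since for $L$ the class $L(\gotM_1+\gotM_2)$ is the unique irreducible sub-quotient of $Z(\gotM_1)\times Z(\gotM_2)$ appearing with multiplicity one, hence being its unique quotient forces the parameter to be the sum). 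Transporting through the equivalences, the modules $\hat{L}(\gotM_\rho^{a_i,b_i,c_i})$ satisfy the pairwise hypothesis of the $\hat{L}$-version of Theorem~\ref{transl} stated in Corollary~\ref{cor-generalhern}.

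Then Corollary~\ref{cor-generalhern} applies directly: the product $\hat{L}(\gotM_\rho^{a_1,b_1,c_1})\times\cdots\times \hat{L}(\gotM_\rho^{a_k,b_k,c_k})$ has a unique irreducible quotient parameterized by $\hat{L}(\gotM_\rho^{a_1,b_1,c_1}+\cdots+\gotM_\rho^{a_k,b_k,c_k})$. Pulling this back through $U_{\Theta(\rho,n)}^{-1}$ and invoking \eqref{monoidU} once more, we conclude that $\pi_\rho^{a_1,b_1,c_1}\times\cdots\times\pi_\rho^{a_k,b_k,c_k}$ has a unique irreducible quotient isomorphic to $L(\gotM_\rho^{a_1,b_1,c_1}+\cdots+\gotM_\rho^{a_k,b_k,c_k})$, which is the asserted quasi-Arthur-type representation.

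The only genuinely delicate point — and the step I expect to be the main obstacle — is the bookkeeping needed to make sure that ``unique irreducible quotient with parameter the sum of parameters'' is preserved coherently under all the functors in play: the block equivalences $U_\Theta$, the passage between $\hat{Z}$ and $\hat{L}$ via the Iwahori--Matsumoto involution $\theta$, and the Gelfand--Kazhdan-type duality of Proposition~\ref{GK}. In particular one must be careful that ``quotient'' on the group side matches ``quotient'' (not ``sub-representation'') on the Hecke side under the chosen normalization, which is why invoking the $\hat{L}$ and ``sub-representation'' variants in Corollary~\ref{cor-generalhern} — rather than the raw $\hat{Z}$ statement of Theorem~\ref{transl} — is the clean route. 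Once that dictionary is fixed, the argument is a formal concatenation of Proposition~\ref{replace} and Corollary~\ref{cor-generalhern} with no further combinatorics required.
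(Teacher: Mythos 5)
Your proposal is correct and follows essentially the same route as the paper's proof: transfer to affine Hecke algebra modules via Proposition~\ref{equiv-block} and the compatibility \eqref{monoidU}, verify the pairwise hypothesis using Proposition~\ref{replace} together with the total preorder $\preceq$, and conclude by Corollary~\ref{cor-generalhern}. The paper states this in two lines; your additional care about matching ``quotient'' across the equivalences is sound but not a genuine obstacle.
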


\begin{proof}
By Proposition \ref{equiv-block} and the property \eqref{monoidU}, we can assume that we are dealing with finite dimensional representations of the corresponding affine Hecke algebras. The statement then follows from Proposition \ref{replace} combined with Corollary \ref{cor-generalhern}.

\end{proof}

For $\pi\in \mathfrak{R}(G_n)$, we write $\pi^\vee\in \mathfrak{R}(G_n)$ for the contragredient (smooth dual) representation. Recall, that this involution of the category was composed with the Gelfand-Kazhdan involution of Section \ref{sect-gk} to obtain Proposition \ref{GK}.

For a segment $\Delta=[a,b]_\rho\in \seg$, we write $\Delta^\vee = [-b,-a]_{\rho^\vee}$. This is an involution which naturally extends to $\multi$. When $\pi = L(\gotM)=Z(\gotN)\in \irr$, we have $\pi^\vee = L(\gotM^\vee)= Z(\gotN^\vee)$ (in particular, the operation of smooth dual commutes with the Zelevinski involution, that is, sending $Z(\gotM)$ to $L(\gotM)$).

Let $\rho\in \irr^u\,\cap\, \mathcal{C}$ be given. We would like to define a similar involution on $\multi_\rho$. Given $\Delta= [a,b]_{\rho\nu^s}\in \seg_\rho$, we set
\[
\Delta^! = [-b,-a]_{\rho\nu^{-s}}\;,
\]
and extend it to an involution $\gotM\mapsto \gotM^!$, for all $\gotM\in \multi_\rho$.

When $\rho\cong\rho^\vee$, we clearly have $\gotM^! = \gotM^\vee$.
%Here $s\mapsto \overline{s}$ is complex conjugation.

Now, given $\gotM\in \multi_\rho$, we can write a unique decomposition $\gotM = \gotM_s + \gotM_a$, with $\gotM_s,\gotM_a\in \multi_\rho$, so that $\gotM_s^!= \gotM_s$ and $\gotM_s$ is the maximal multisegment with that property.

Clearly, for a segment $\Delta\in \gotM_a$, we have $\Delta^!\not\in \gotM_a$.

\begin{lemma}\label{seg-lem}

Suppose that $\pi$ is a quasi-Arthur-type representation.

If $\overline{\pi}:=\nu^{-1/2}\pi^\vee$ is quasi-Arthur-type as well, given as
\[
\overline{\pi} = L\left( \gotM_\rho^{a_1,b_1,c_1} + \ldots + \gotM_\rho^{a_k,b_k,c_k}\right)\;,
\]
then we can write
\[
\pi = L\left( \sum_{i\in I^-}\gotM_{\rho^\vee}^{a_i,b_i-1} + \sum_{i\in I^+} \gotM_{\rho^\vee}^{a_i,b_i+1,0}\right)\;,
\]
for a certain disjoint partition $\{1,\ldots,k\}=I^- \cup I^+$, where $\gotM_\rho^{a,o}$ is viewed as an empty multisegment.

In particular,
\[
\pi  \cong L(\gotM + \gotM'\nu^{-1/2}) \;,
\]
%\cong L(\gotM)\times L(\gotM')\nu^{-1/2}
for some proper Arthur-type representations $L(\gotM),\,L(\gotM')$.
\end{lemma}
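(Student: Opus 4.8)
\emph{Setup.} Put $\sigma:=\rho^\vee$ and $\gotN:=\gotM_\rho^{a_1,b_1,c_1}+\cdots+\gotM_\rho^{a_k,b_k,c_k}\in\multi_\rho$, so that $\overline\pi=L(\gotN)$. From $\overline\pi=\nu^{-1/2}\pi^\vee$ we get $\pi=\nu^{-1/2}\overline\pi^\vee=L(\gotN^\vee\nu^{-1/2})$, so the entire statement is a computation of the multisegment $\gotN^\vee\nu^{-1/2}\in\multi_\sigma$. The first step is to record the effect of $(-)^\vee\nu^{-1/2}$ on a single quasi-Speh piece. A direct inspection of the defining formula shows that Speh multisegments satisfy $(\gotM_\rho^{a,b})^\vee=\gotM_\sigma^{a,b}$; combining this with the identity $\gotM_\rho^{a,b,0}=\gotM_\rho^{a,b-1}\nu^{-1/2}$ noted in Section~\ref{sect-work}, and writing $\gotM_\rho^{a,b,c}=\gotM_\rho^{a,b,0}+\Delta_1$ with $\Delta_1=[\frac{b+a}{2}-c,\frac{b+a}{2}-1]_\rho$ the (possibly empty) top segment of length $c$, one obtains
\[
(\gotM_\rho^{a,b,c})^\vee\nu^{-1/2}=\gotM_\sigma^{a,b-1}+\Theta_{a,b,c},\qquad \Theta_{a,b,c}:=\Delta_1^\vee\nu^{-1/2}=\left[\frac{1-a-b}{2},\frac{2c-1-a-b}{2}\right]_\sigma ,
\]
a single segment of length $c$, which is empty if $c=0$ and which (when non-empty) precedes the bottom segment of the length-$a$ ladder $\gotM_\sigma^{a,b-1}$.

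\emph{The extreme cases and the reduction.} When $c=0$ the right-hand side is just $\gotM_\sigma^{a,b-1}$; when $c=a$, the segment $\Theta_{a,b,a}$ has length $a$ and prolongs the ladder $\gotM_\sigma^{a,b-1}$ by exactly one rung at the bottom, so that $\gotM_\sigma^{a,b-1}+\Theta_{a,b,a}=\gotM_\sigma^{a,b}\nu^{-1/2}=\gotM_\sigma^{a,b+1,0}$. Summing the single-piece identity over $i$,
\[
\gotN^\vee\nu^{-1/2}=\sum_{i=1}^{k}\gotM_\sigma^{a_i,b_i-1}+\sum_{i\,:\,c_i>0}\Theta_{a_i,b_i,c_i} .
\]
Hence the lemma reduces to the claim that the hypothesis ``$\pi=L(\gotN^\vee\nu^{-1/2})$ is quasi-Arthur-type'' forces $c_i\in\{0,a_i\}$ for every $i$. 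Granting this, set $I^-:=\{i:c_i=0\}$ and $I^+:=\{i:c_i=a_i\}$ (disjoint because each $a_i>0$, and together exhausting $\{1,\ldots,k\}$); summing the $c=0$ identity over $I^-$ and the $c=a$ identity over $I^+$ gives $\gotN^\vee\nu^{-1/2}=\sum_{I^-}\gotM_\sigma^{a_i,b_i-1}+\sum_{I^+}\gotM_\sigma^{a_i,b_i+1,0}$, which is the first displayed assertion. Then $\gotM:=\sum_{I^-}\gotM_\sigma^{a_i,b_i-1}$ and $\gotM':=\sum_{I^+}\gotM_\sigma^{a_i,b_i}$ are sums of Speh multisegments over the single cuspidal $\sigma$, hence classify proper Arthur-type representations, while $\gotM'\nu^{-1/2}=\sum_{I^+}\gotM_\sigma^{a_i,b_i+1,0}$ by the $c=a$ identity, giving the ``in particular''.

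\emph{The remaining step, and the main obstacle.} What is left is to exclude $0<c_i<a_i$. By the computation, such a tail $\Theta_{a_i,b_i,c_i}$ is a segment strictly shorter than $a_i$ occurring at the \emph{bottom} of the otherwise uniform length-$a_i$ ladder it precedes, whereas in any quasi-Speh piece $\gotM_\sigma^{a',b',c'}$ a segment strictly shorter than the common length $a'$ of its companions can occur only as the \emph{top} segment (the case $0<c'<a'$), sitting above a full descending ladder of length-$a'$ segments. I would therefore analyse an arbitrary quasi-Arthur-type presentation $\gotN^\vee\nu^{-1/2}=\sum_j\gotM_\sigma^{a'_j,b'_j,c'_j}$ at the level of the underlying multiset of segments, and show by precedence bookkeeping --- in the spirit of the proof of Proposition~\ref{replace}, together with the matching criterion of Theorem~\ref{lap-min} and the rigidity of ladder multisegments --- that no such presentation can accommodate the tails $\Theta_{a_i,b_i,c_i}$ with $0<c_i<a_i$; the crux is to control how these short tails could be re-paired among distinct quasi-Speh pieces. (This bookkeeping may alternatively be organised through the $!$-symmetric/$!$-antisymmetric decomposition of multisegments introduced just above, with the symmetric part accounting for the Speh, i.e.\ $c_i=a_i$, pieces.) This last combinatorial step is the main obstacle; the preceding reductions are essentially bookkeeping.
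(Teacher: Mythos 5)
Your computation of $(\gotM_\rho^{a,b,c})^\vee\nu^{-1/2}$ and the bookkeeping around the extreme cases $c=0$ and $c=a$ are correct and parallel the first steps of the paper's argument. The problem is the pivot: the claim that the hypotheses force $c_i\in\{0,a_i\}$ for every $i$ is false, so the ``remaining step'' you defer cannot be carried out. Take $k=2$ with $(a_1,b_1,c_1)=(3,1,2)$ and $(a_2,b_2,c_2)=(2,2,0)$. Then $\gotM_\rho^{3,1,2}=[0,1]_\rho$ and $\gotM_\rho^{2,2,0}=[-1,0]_\rho$, so $\overline\pi=L([0,1]_\rho+[-1,0]_\rho)=L(\gotM_\rho^{2,2})$ is a legitimate quasi-Arthur-type presentation, and $\pi=\nu^{-1/2}\overline\pi^\vee=L(\gotM_{\rho^\vee}^{2,3,0})$ is quasi-Arthur-type; yet $c_1=2$ satisfies $0<c_1<a_1=3$. (The lemma's conclusion does hold here, with $I^-=\{1\}$ contributing the empty multisegment $\gotM^{3,0}$ and $I^+=\{2\}$.) What breaks in your reduction is the implicit assumption that each tail $\Theta_{a_i,b_i,c_i}$ must complete the ladder $\gotM_\sigma^{a_i,b_i-1}$ of its own index: it may instead serve as the bottom rung of the ladder attached to a \emph{different} index $j$ with $a_i+b_i=a_j+b_j$ and $c_i=a_j$. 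The correct combinatorial statement is therefore a matching of tails against top segments across indices, not a per-index dichotomy; and since the partition $I^\pm$ in the conclusion is governed by which top segments get matched, your definition of $I^\pm$ by the values of $c_i$ is also not the right one.

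The paper's proof organizes exactly this matching via the involution $\gotM\mapsto\gotM^{!}$, which you mention only in a parenthesis. Writing $\gotN$ for the multisegment of $\nu^{1/2}\pi$, the quasi-Arthur presentation of $\pi$ yields $\gotN_s=\sum_j\gotM_{\rho^\vee}^{a'_j,b'_j-1}$ together with the key positivity constraint that every segment $\Delta$ of the antisymmetric part $\gotN_a$ has $b_{\rho^\vee}(\Delta)+e_{\rho^\vee}(\Delta)>0$. Each non-empty tail $\Delta_i^-$ has non-positive center, so it must lie in $\gotN_s$ and hence be matched with its $!$-dual, which (outside the degenerate case $b_i=1$, $c_i=a_i$) can only be some $\Delta_j^+$; this gives $\gotM_{\rho^\vee}^{a_j,b_j}\subseteq\gotN_s$ and produces the partition. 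Beyond the incorrect reduction, you also leave the combinatorial core unproven, and the precedence/Lapid--M\'{i}nguez machinery you propose for it is not what is needed: the argument is a multiset identity plus the $!$-symmetry, with no reference to the Zelevinski order or matching functions on bipartite graphs in the sense of Theorem \ref{lap-min}.
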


\begin{proof}
We can assume that
\[
\pi = L\left( \gotM_{\rho^\vee}^{a'_1,b'_1,c'_1} + \ldots + \gotM_{\rho^\vee}^{a'_m,b'_m,c'_m}\right)\;,
\]
for some integers $a'_i,b'_i,c'_i$.
Let us write $\gotN = \left(\gotM_{\rho^\vee}^{a'_1,b'_1,c'_1}+ \ldots + \gotM_{\rho^\vee}^{a'_m,b'_m,c'_m}\right)\nu^{1/2}$. Clearly, we have
\[
\gotN_s = \gotM_{\rho^\vee}^{a'_1,b'_1-1} + \ldots + \gotM_{\rho^\vee}^{a'_m,b'_m-1}\;,
\]
and for every segment $\Delta\in \gotN_a$, we have
\begin{equation}\label{avg}
b_{\rho^\vee}(\Delta)+ e_{\rho^\vee}(\Delta) >0\;.
\end{equation}

%By assumption, we also have
%\[
%\overline{\pi}:= \nu^{-1/2}\pi^\vee = L\left( \gotM_{\rho^\vee}^{a'_1,b'_1,c'_1} + \ldots + \gotM_{\rho^\vee}^{a'_l,b'_l,c'_l}\right)\;,
%\]
%for some integers $\{a'_i,b'_i,c'_i\}_{i=1}^l$. Therefore,
From $\nu^{1/2}\pi =  \overline{\pi}^\vee$ we deduce that
\[
\gotN = (\gotM_{\rho}^{a_1,b_1,c_1})^\vee + \ldots + (\gotM_{\rho}^{a_k,b_k,c_k})^\vee =
\]
\[
= \gotM_{\rho^\vee}^{a_1, b_1-2} + \ldots + \gotM_{\rho^\vee}^{a_k,b_k-2} + \Delta^+_1 +\ldots + \Delta^+_k + \Delta^-_1 + \ldots +\Delta^-_k \;,
\]
where
\[
\Delta^+_i = \left\{\begin{array}{ll} \left[ \frac{b_i-a_i}2, \frac{b_i+a_i}2-1\right]_{\rho^\vee} & b_i>1 \\ 0 & b_i=1\end{array}\right. ,\quad  \Delta^-_i = \left[-\frac{b_i+a_i}2+1, -\frac{b_i+a_i}2+c_i\right]_{\rho^\vee}\;,
\]
and $\gotM^{a,0}_{\rho^\vee},\gotM^{a,-1}_{\rho^\vee}$ are understood as empty multisegments. Clearly,
\[
\mathfrak{t}:=\gotM_{\rho^\vee}^{a_1, b_1-2} + \ldots + \gotM_{\rho^\vee}^{a_k,b_k-2}  \subseteq \gotN_s\;.
\]

Note, that $b_{\rho^\vee}(\Delta^-_i) + e_{\rho^\vee}(\Delta^-_i)\leq 0$, for all $i$ with non-empty $\Delta^-_i$.

Suppose that $\Delta^-_i$ is such a non-empty segment. Then, by \eqref{avg} $\Delta^-_i\not\in \gotN_a$, which means $\Delta^-_i\in \gotN_s$. In case that $b_1 =1$ and $c_i=a_i$, we have $\Delta^-_i = \gotM_{\rho^\vee}^{a_i,1}= (\gotM_{\rho^\vee}^{a_i,1})^!$.

Otherwise, $\Delta^-_i\neq(\Delta^-_i)^!\in \gotN$. Since $\mathfrak{t}^!= \mathfrak{t}$ and $b_{\rho^\vee}((\Delta^-_i)^!)+  e_{\rho^\vee}((\Delta^-_i)^!)>0$, we must have $(\Delta^-_i)^! = \Delta^+_j$, for some $j$.

It follows that
\[
\gotM_{\rho^\vee}^{a_j,b_j}= \gotM_{\rho^\vee}^{a_j,b_j-2} + \Delta^-_i + \Delta^+_j\subseteq \gotN_s\;.
\]
We then can write
\[
\gotN_s = \sum_{j\in J} \gotM_\rho^{a_j, b_j-2} + \sum_{j\in K} \gotM_\rho^{a_j,b_j}\;,\quad \gotN_a = \sum_{j\in J}\Delta^+_j\;,
\]
for disjoint subsets $J,K\subset\{1,\ldots,k\}$, such that $\{\Delta^+_j\}_{j\in J}$ are non-empty.

Note, that $b_i=1$ for all $i\in \{1,\ldots,k\}\setminus(J\cup K)$.

The statement follows from the observations
\[
\nu^{-1/2}\gotM_{\rho^\vee}^{a_j,b_j} = \gotM_{\rho^\vee}^{a_j,b_j+1,0},\quad \nu^{-1/2}(\gotM_{\rho^\vee}^{a_j, b_j-2}+\Delta^+_j)= \gotM_{\rho^\vee}^{a_j,b_j-1}\;.
\]
\end{proof}

\begin{corollary}\label{cor-main}
Suppose that
\[
\pi_1 = L\left( \gotM^{a_1,b_1,c_1}_\rho + \ldots + \gotM^{a_k,b_k,c_k}_\rho\right)\;,
\]
\[
\pi_2 = L\left( \gotM^{a'_1,b'_1,c'_1}_{\rho^\vee} + \ldots + \gotM^{a'_l,b'_l,c'_l}_{\rho^\vee}\right)\;,
\]
are two quasi-Arthur-type representations, satisfying $\pi_2 = \nu^{-1/2}\pi^\vee_1$.

Then, there are disjoint partitions $\{1,\ldots,k\} = I_1\cup I_2\cup I_3$, $\{1,\ldots,l\} = J_1\cup J_2\cup J_3$ and bijections $u:I_1\to J_2$, $d:I_2\to J_1$, which satisfy
\[
(a'_{u(i)}, b'_{u(i)}) = (a_i,b_i+1)\; \forall i\in I_1, (a'_{d(i)},b'_{d(i)}) = (a_i,b_i-1)\; \forall i\in I_2,\;
\]
\[
b_i =1 \; \forall i\in I_3,\quad b'_j=1\;\forall j\in J_3\;.
\]

\end{corollary}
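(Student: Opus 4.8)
The plan is to reduce everything to Lemma \ref{seg-lem}, which already contains the essential mechanism. Since $\pi_2 = \nu^{-1/2}\pi_1^\vee$, the representations $\overline{\pi_2} := \nu^{-1/2}\pi_2^\vee$ and $\overline{\pi_1} := \nu^{-1/2}\pi_1^\vee$ equal $\pi_1$ and $\pi_2$ respectively, so Lemma \ref{seg-lem} applies to each of $\pi_1,\pi_2$, with the other playing the role of $\overline{\pi}$ in its given presentation. Applying it to $\pi_2$ yields a disjoint partition $\{1,\dots,k\} = I^-\sqcup I^+$ together with $\pi_2 = L\big(\sum_{i\in I^-}\gotM_{\rho^\vee}^{a_i,b_i-1} + \sum_{i\in I^+}\gotM_{\rho^\vee}^{a_i,b_i+1,0}\big)$, and applying it to $\pi_1$ yields a disjoint partition $\{1,\dots,l\} = J^-\sqcup J^+$ with $\pi_1 = L\big(\sum_{j\in J^-}\gotM_{\rho}^{a'_j,b'_j-1} + \sum_{j\in J^+}\gotM_{\rho}^{a'_j,b'_j+1,0}\big)$. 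I would then set $I_1 := I^+$, $I_2 := \{i\in I^-: b_i\geq 2\}$, $I_3 := \{i\in I^-: b_i=1\}$, and likewise $J_1 := J^+$, $J_2 := \{j\in J^-: b'_j\geq 2\}$, $J_3 := \{j\in J^-: b'_j=1\}$. These are disjoint partitions of $\{1,\dots,k\}$ and $\{1,\dots,l\}$, and by construction $b_i = 1$ for $i\in I_3$ and $b'_j = 1$ for $j\in J_3$, so the boundary conditions of the statement hold automatically.

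It remains to produce the bijections $u\colon I_1\to J_2$ and $d\colon I_2\to J_1$. Transporting $\pi_1$'s decomposition through $\pi_2 = \nu^{-1/2}\pi_1^\vee$, using that $(\cdot)^\vee$ commutes with $L$ and with unramified twisting together with $\gotM_\rho^{a,b+1,0} = \gotM_\rho^{a,b}\nu^{-1/2}$ and its dual, one obtains a second expression for the multisegment of $\pi_2$:
\[
\sum_{i\in I^-}\gotM_{\rho^\vee}^{a_i,b_i-1} + \Bigl(\sum_{i\in I^+}\gotM_{\rho^\vee}^{a_i,b_i}\Bigr)\nu^{-1/2} \;=\; \sum_{j\in J^+}\gotM_{\rho^\vee}^{a'_j,b'_j} + \Bigl(\sum_{j\in J^-}\gotM_{\rho^\vee}^{a'_j,b'_j-1}\Bigr)\nu^{-1/2}\;.
\]
I would separate this identity into the part of the multisegment supported over $\rho^\vee$ and the part supported over $\rho^\vee\nu^{-1/2}$; each of the four displayed sums is a sum of centered (hence $\gotM\mapsto\gotM^!$-invariant) Speh multisegments over $\rho^\vee$, so the uniqueness of the decomposition of an Arthur-type multisegment over a fixed supercuspidal into Speh multisegments (the uniqueness underlying Theorem \ref{tadic}) then gives $\sum_{i\in I_2}\gotM_{\rho^\vee}^{a_i,b_i-1} = \sum_{j\in J_1}\gotM_{\rho^\vee}^{a'_j,b'_j}$ and $\sum_{i\in I_1}\gotM_{\rho^\vee}^{a_i,b_i} = \sum_{j\in J_2}\gotM_{\rho^\vee}^{a'_j,b'_j-1}$, with the multisets of Speh parameters matching up on both sides. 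Extracting these matchings produces bijections $d\colon I_2\to J_1$ with $(a'_{d(i)},b'_{d(i)}) = (a_i,b_i-1)$ and $u\colon I_1\to J_2$ with $(a'_{u(i)},b'_{u(i)}) = (a_i,b_i+1)$, which is exactly what the statement asks for.

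The essential work is done by Lemma \ref{seg-lem}; the only genuinely new step is the matching in the second paragraph, and I expect this to be the main obstacle. The subtle point is the separation of the multisegment of $\pi_2$ into the summand supported over $\rho^\vee$ and the summand supported over $\rho^\vee\nu^{-1/2}$: decompositions into Speh multisegments are not unique once arbitrary unramified twists are permitted, so one cannot simply quote a general uniqueness theorem; the argument must exploit the rigidity coming from the assumption that both $\pi_1$ and $\pi_2$ are quasi-Arthur, which pins the given presentations over $\rho$ (respectively $\rho^\vee$) rather than over a half-integral twist and allows the two twisting lines to be disentangled. A secondary, purely bookkeeping, difficulty lies in the boundary cases $b_i = 1$ and $b'_j = 1$, where $\gotM_{\rho^\vee}^{a_i,b_i-1}$ and $\gotM_\rho^{a'_j,b'_j-1}$ degenerate to the empty multisegment and must be steered into $I_3$ and $J_3$; keeping track of these degeneracies is precisely what forces the three-way partitions appearing in the statement.
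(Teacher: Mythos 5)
Your argument is correct and reaches the required conclusion, but the combinatorial endgame is genuinely different from the paper's. The paper applies Lemma \ref{seg-lem} only once (to $\pi=\pi_1$ with $\overline{\pi}=\pi_2$) and then compares the resulting expression directly against the \emph{given} presentation $\sum_i\gotM_\rho^{a_i,b_i,c_i}$ of $\pi_1$, using the decomposition $\gotM=\gotM_-+\gotM_0+\gotM_+$ by sign of segment centers to neutralize the unknown truncation parameters $c_i$. You instead apply the lemma in both directions (legitimate, since $\overline{\pi_2}=\pi_1$ as you observe) and compare the two resulting expressions; this buys an endgame involving only honest Speh multisegments, with no $c_i$'s left, at the cost of having to disentangle the $\nu^{-1/2}$-twist. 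One caution on that disentangling step, which you rightly flag as the crux: the separation of $A+B\nu^{-1/2}=C+D\nu^{-1/2}$ into $A=C$ and $B=D$ cannot be done by supercuspidal support (the lines $\rho^\vee\nu^{\mathbb{Z}}$ and $\rho^\vee\nu^{-1/2+\mathbb{Z}}$ interleave according to the parity of $a+b$), nor by a blanket uniqueness of decompositions into twisted Speh multisegments, which is false in general (e.g.\ $\gotM_{\rho\nu^{-1}}^{1,3}=\gotM_\rho^{1,1}+\gotM_{\rho\nu^{-3/2}}^{1,2}$). What does work is precisely the $!$-invariance you mention in passing: since $A^!=A$, $B^!=B$, $C^!=C$, $D^!=D$, applying $\gotM\mapsto\gotM^!$ to the identity gives $A+B\nu^{1/2}=C+D\nu^{1/2}$, and subtracting the two identities in $\mathbb{Z}(\seg_{\rho^\vee})$ yields $(B-D)\nu^{-1}=B-D$, which forces $B=D$ by finiteness of support, and then $A=C$. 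Only after that separation does the uniqueness of the decomposition of a centered Arthur-type multisegment over the single supercuspidal $\rho^\vee$ into centered Speh multisegments apply, producing your bijections $u$ and $d$. With that step made explicit, your proof is complete.
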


\begin{proof}
By Lemma \ref{seg-lem} we have an equality of multisegments of the form
\[
\sum_{j\in J_1}\gotM_\rho^{a'_j,b'_j+1,0}+ \sum_{j\in J_2} \gotM_\rho^{a'_{j},b'_{j}-1} = \gotM_{\rho}^{a_1,b_1,c_1} + \ldots + \gotM_{\rho}^{a_k,b_k,c_k}\;,
\]
for a disjoint partition $\{1,\ldots,l\} = J_1\cup J_2\cup J_3$, such that $b'_j=1$ for all $j\in J_3$, and $b'_j>1$ for all $j\in J_2$.

Let us further write a disjoint partition $J_2 = J'_2 \cup J''_2$, where $J'_2= \{j\in J_2\,:\, b'_j=2\}$.
Similarly, we write $\{1,\ldots,k\} = I'\cup I''$, where $I'= \{i\in I\,:\, b_i=1\}$.

For a multisegment $\gotM = \sum_{s\in K}[\alpha_s,\beta_s]_\rho \in \seg_\rho$, let us write a decomposition
\[
\gotM = \gotM_- + \gotM_0 + \gotM_+\;,
\]
\[
\gotM_- = \sum_{s\in K,\; \alpha_s +\beta_s <0} [\alpha_s,\beta_s]_\rho\;,\;\gotM_0 = \sum_{s\in K,\; \alpha_s +\beta_s =0} [\alpha_s,\beta_s]_\rho\;,\;\gotM_+ = \sum_{s\in K,\; \alpha_s +\beta_s >0} [\alpha_s,\beta_s]_\rho\;.
\]
We obtain the identity
\[
\left(\gotM_{\rho}^{a_1,b_1} + \ldots + \gotM_{\rho}^{a_k,b_k}\right)_- = \left(\gotM_{\rho}^{a_1,b_1,c_1} + \ldots + \gotM_{\rho}^{a_k,b_k,c_k}\right)_- =
\]
\[
\left(\sum_{j\in J_1}\gotM_\rho^{a'_j,b'_j+1,0}+ \sum_{j\in J_2} \gotM_\rho^{a'_{j},b'_{j}-1}\right)_- = \left(\sum_{j\in J_1}\gotM_\rho^{a'_j,b'_j+1}+ \sum_{j\in J_2} \gotM_\rho^{a'_{j},b'_{j}-1}\right)_-\;.
\]
A moment's reflection will show that such an identity forces a bijection $t:I''\to J_1\cup J''_2$, so that $(a_i, b_i)= (a'_{t(i)}, b'_{t(i)}+1)$, in case that $t(i)\in J_1$, and $(a_i, b_i)= (a'_{t(i)}, b'_{t(i)}-1)$, in case that $t(i)\in J''_2$. %Furthermore, $b_i=1$, for all $i\in I':=I\setminus I''$.

In particular, we see that
\[
\left(\sum_{j\in J_1}\gotM_\rho^{a'_j,b'_j+1,0}+ \sum_{j\in J''_2} \gotM_\rho^{a'_{j},b'_{j}-1}\right)_0 = \left(\sum_{j\in J_1}\gotM_\rho^{a'_j,b'_j+1}+ \sum_{j\in J''_2} \gotM_\rho^{a'_{j},b'_{j}-1}\right)_0 =
\]
\[
= \left( \sum_{i\in I''} \gotM_\rho^{a_i,b_i}\right)_0 =\left( \sum_{i\in I''} \gotM_\rho^{a_i,b_i,c_i}\right)_0 \;,
\]
which also implies
\[
\sum_{j\in J'_2} \gotM_\rho^{a'_{j},1} = \left(\sum_{j\in J'_2} \gotM_\rho^{a'_{j},1}\right)_0 = \left( \sum_{i\in I'} \gotM_\rho^{a_i,1,c_i}\right)_0\;.
\]
Since
\[
\left(\gotM^{a,1,c}_\rho\right)_0= \left\{ \begin{array}{cc} \gotM_\rho^{a,1} & c=a \\ 0 & c<a\end{array}\right.\;,
\]
we see that $\sum_{j\in J'_2} \gotM_\rho^{a'_{j},1} = \sum_{i\in \tilde{I}} \gotM_\rho^{a_i,1}$, for a subset $\tilde{I}\subset I'$. This implies a bijection $s: \tilde{I}\to J'_2$, such that $(a_i,b_i) = (a'_{s(i)}, b'_{s(i)}-1)$, for all $i\in \tilde{I}$.

The desired bijections $u$ and $d$ are easily constructed out of $t$ and $s$ after denoting $I_1 = \tilde{I}\cup t^{-1}(J''_2)$, $I_2 = t^{-1}(J_1)$ and $I_3 = I_1\setminus \tilde{I}$.

%implies the existence of an injection $\iota: \{1,\ldots,t\}\to \{1,\ldots,k\}$, for which $(a_{\iota(i)},b_{\iota(i)}) = (d_i,e_i)$ holds, and $b_i=1$, for all $i\in \{1,\ldots,k\}\setminus Im (\iota)$.

\end{proof}

\section{Main Theorems}\label{sect-main}

We would like to determine for which pairs of representations $\pi_1,\pi_2\in \irr^u$, such that $\pi_1\in \irr(G_n)$ and $\pi_2\in \irr(G_{n-1})$, the space
\[
\Hom (\pi_1|_{G_{n-1}},\pi_2)
\]
is non-zero. Here we consider $G_{n-1}$ as a subgroup of $G_n$ embedded in the corner, as described in the introduction section.

Now, suppose that $\pi_1,\pi_2$ are Arthur-type representations. It is easy to verify that they can be written uniquely (up to ordering) in the form
\[
\pi_1 = L\left( \gotM^{a_1,b_1}_{\rho_1} + \ldots + \gotM^{a_k,b_k}_{\rho_k}\right)\;,
\]
\[
\pi_2 = L\left( \gotM^{a'_1,b'_1}_{\rho'_1} + \ldots + \gotM^{a'_l,b'_l}_{\rho'_l}\right)\;.
\]
\begin{definition}
We say that a pair $(\pi_1,\pi_2)$ of Arthur-type representations is in \textit{GGP position}, if in terms of the decomposition above, there are disjoint partitions
\[
\{1,\ldots,k\} = I_1\cup I_2\cup I_3, \quad\{1,\ldots,l\} = J_1\cup J_2\cup J_3\;,
\]
and bijections $u:I_1\to J_2$, $d:I_2\to J_1$, which satisfy
\[
(a'_{u(i)}, b'_{u(i)}) = (a_i,b_i+1),\,\rho'_{u(i)}\cong \rho_i\; \forall i\in I_1,
\]
\[
(a'_{d(i)},b'_{d(i)}) = (a_i,b_i-1),\,\rho'_{d(i)}\cong \rho_i\; \forall i\in I_2,\;
\]
\[
b_i =1 \; \forall i\in I_3,\quad b'_j=1\;\forall j\in J_3\;.
\]
\end{definition}

Our terminology \textit{GGP position} stands for the authors' names Gan-Gross-Prasad, who have coined this condition in their Conjecture \ref{main-conj}.

We are now going to prove two main intermediate steps that together will imply that a non-zero $\Hom$ space between two Arthur-type representations $\pi_1,\pi_2$ of $G_n,G_{n-1}$ must imply that the pair $(\pi_1,\pi_2)$ is in GGP position.

The first step (Section \ref{sect-filtr}, Proposition \ref{filt-frob}) will transfer the non-vanishing of the restricted $\Hom$ space into the non-vanishing of another $\Hom$ space between certain derivatives of $\pi_1,\pi_2$. This will have the immense advantage of translating the problem into one about the category of finite-length representations of a smaller group. It will be possible due the useful Bernstein-Zelevinski filtration occurring when restricting to the mirabolic group situated amidst $G_{n-1}$ and $G_n$.

The second step is proved in the following theorem, that states (using the tools developed in Section \ref{sect-work}) that the non-zero derivative $\Hom$ space implies a combinatorial condition that can be read as the GGP position.

Put together, these steps will be summed up in Section \ref{sect-branch} to state our main results.

\begin{theorem}\label{thm-deri}
Suppose that $\pi_1,\pi_2\in \irr^u$ are two Arthur-type representations, which satisfy
\[
\Hom (\nu^{1/2}\pi^{(i)}_1, \; ^{(j)}\pi_2)\neq \{0\}\;,
\]
for some $i,j$.

Then, $(\pi_1,\pi_2)$ is in GGP position.

\end{theorem}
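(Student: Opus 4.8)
The plan is to strip the derivative structure off both sides of the hypothesis, reduce the non-vanishing to a comparison of two quasi-Arthur-type representations, and then invoke Corollary~\ref{cor-main}.

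Since $\pi_1^{(i)}$ lives on $G_{n-i}$ and ${}^{(j)}\pi_2$ on $G_{n-1-j}$, the $\Hom$ space can only be non-zero when $j=i-1$. Using the compatibility of the Bernstein--Zelevinski derivatives with the contragredient and the Gelfand--Kazhdan involution, one has $\Hom(\nu^{1/2}\pi_1^{(i)},{}^{(i-1)}\pi_2)\cong\Hom(\nu^{1/2}\pi_1^{(i)},((\pi_2^\vee)^{(i-1)})^\vee)$, and since $\pi_2$ is unitarizable, $\pi_2^\vee$ is again a product of Speh representations. By the Leibniz rule (Proposition~\ref{leibn}) together with Proposition~\ref{deriv-speh}, each of $\nu^{1/2}\pi_1^{(i)}$ and $(\pi_2^\vee)^{(i-1)}$ then carries a finite filtration whose graded pieces are, up to the character twist, products of quasi-Speh representations, with each quasi-Speh component retaining the Speh parameters $(a_m,b_m)$ of $\pi_1$, resp.\ $(a'_m,b'_m)$ of $\pi_2$.

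Next I would run a two-step filtration chase: applying the long exact sequence of $\Hom$ first to the bottom step of the filtration of the target, inductively, and then to the top step of the filtration of the source, inductively, a non-zero homomorphism yields a non-zero homomorphism $\bar X\to\bar Y$ between a single graded piece $\bar X$ of $\nu^{1/2}\pi_1^{(i)}$ and a single graded piece $\bar Y$ of $((\pi_2^\vee)^{(i-1)})^\vee$. Fix $\preceq$-non-increasing orderings of the Speh factors of $\pi_1$ and of $\pi_2^\vee$ (legitimate since these representations are irreducible); as $\preceq$ on quasi-Speh representations depends only on $(a,b)$, every graded piece is then a $\preceq$-non-increasing product of quasi-Speh representations spread over finitely many supercuspidal lines. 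Proposition~\ref{main-prop} on each line, combined with Corollary~\ref{disjsupp-cor} across lines, shows that $\bar X$ has a unique irreducible quotient $\nu^{1/2}\sigma_1$ and $\bar Y$ a unique irreducible subrepresentation $\sigma_2^\vee$, where $\sigma_1,\sigma_2$ are quasi-Arthur-type representations with one quasi-Speh component per Speh component of $\pi_1$, resp.\ $\pi_2$, and with the same parameters $(a_m,b_m)$, resp.\ $(a'_m,b'_m)$. Lemma~\ref{mult-techlem} applied to $\Hom(\bar X,\bar Y)\neq\{0\}$ then forces $\nu^{1/2}\sigma_1\cong\sigma_2^\vee$, i.e.\ $\sigma_2\cong\nu^{-1/2}\sigma_1^\vee$; and supercuspidal-support considerations match the cuspidal lines of $\pi_1$ and $\pi_2$ (so the underlying supercuspidals agree there) while forcing $b=1$ on every unmatched component.

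The pair $(\sigma_1,\sigma_2)$, read one supercuspidal line at a time, is precisely in the situation of Corollary~\ref{cor-main}; the partitions and bijections it provides, transported to the parameters $(a_m,b_m)$, $(a'_m,b'_m)$ — which are the Speh parameters of $\pi_1$ and $\pi_2$ — and augmented by the unmatched components (which enter $I_3$ and $J_3$), are by definition the statement that $(\pi_1,\pi_2)$ is in GGP position. The step I expect to be the main obstacle is the reduction to the clean relation $\sigma_2\cong\nu^{-1/2}\sigma_1^\vee$: because $\pi_1^{(i)}$ and ${}^{(i-1)}\pi_2$ are genuinely reducible, a non-zero homomorphism need not factor through a single graded piece, so one must be careful in the filtration chase, in choosing the orderings so that the unique-quotient/unique-subrepresentation conclusions of Proposition~\ref{main-prop} apply to every graded piece simultaneously, and in the supercuspidal-support bookkeeping across several lines. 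Once that relation is established, Corollary~\ref{cor-main} does the combinatorial work.
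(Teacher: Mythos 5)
Your proposal is correct and follows essentially the same route as the paper's proof: Leibniz rule plus the derivative formula for Speh representations, a filtration chase to a single graded piece, the $\preceq$-ordering so that Proposition \ref{main-prop} applies, splitting over supercuspidal lines via Corollary \ref{disjsupp-cor}, Lemma \ref{mult-techlem} to force $\nu^{1/2}\sigma_1\cong\sigma_2^\vee$, and Corollary \ref{cor-main} to conclude. The only cosmetic difference is that the paper first tensor-decomposes the $\Hom$ space over the cuspidal lines and then works line by line, rather than establishing unique quotient/subrepresentation for the full cross-line products.
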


\begin{proof}
Let us write $\pi_1\cong \pi_{\rho'_1}^{a'_1,b'_1}\times \cdots \times \pi_{\rho'_k}^{a'_k,b'_k}$ and $\pi_2\cong \pi_{\rho_1}^{a_1,b_1}\times \cdots \times \pi_{\rho_l}^{a_l,b_l}$.

Since the above products are independent of the order in which the factors are taken, we can assume that $a_i+b_i\geq a_j+b_j$ and $a'_i+b'_i\geq a'_j+b'_j$ for all $i<j$ and that $a_i\geq a_j$ (respectively, $a'_i\geq a'_j$), in case that $a_i+b_i= a_j+b_j$ (respectively, $a'_i+b'_i= a'_j+b'_j$).
%Similarly, we assume that $a'_i+b'_i\geq a'_j+b'_j$ for all $i<j$ and that $a'_i\geq a'_j$, in case that $a'_i+b'_i= a'_j+b'_j$.

Recall that $(\pi^{a,b}_\rho)^\vee \cong \pi^{a,b}_{\rho^\vee}$. Then, by Propositions \ref{leibn}, \ref{deriv-speh} and the assumption on the non-vanishing homomorphism space, we deduce that
\[
\Hom \left( \nu^{1/2}\left( \pi_{\rho'_1}^{a'_1,b'_1,c'_1}\times \cdots \times \pi_{\rho'_l}^{a'_l,b'_l,c'_l}\right), \,  \left(\pi_{\rho^\vee_1}^{a_1,b_1,c_1}\times \cdots \times \pi_{\rho^\vee_k}^{a_k,b_k,c_k}\right)^\vee\right)\neq \{0\}\;,
\]
for some integers $c_1,\ldots,c_k,c'_1,\ldots,c'_l$.

By rearranging the factors in the product if necessary and using Corollary \ref{disjsupp-cor}, we can decompose the above homomorphism space as
\[
\bigotimes_{i=1}^t \Hom \left( \nu^{1/2}\left( \pi_{\rho^i}^{a'_{r'_i(1)},b'_{r'_i(1)},c'_{r'_i(1)}}\times \cdots \times \pi_{\rho^i}^{a'_{r'_i(s'_i)},b'_{r'_i(s'_i)},c'_{r'_i(s'_i)}} \right),\right.
\]
\[
\left. \left(\pi_{\left(\rho^i\right)^\vee}^{a_{r_i(1)},b_{r_i(1)},c_{r_i(1)}}\times \cdots \times \pi_{\left(\rho^i\right)^\vee}^{a_{r_i(s_i)},b_{r_i(s_i)},c_{r_i(s_i)}}\right)^\vee\right)\;,
\]
where $\{\rho^1,\ldots,\rho^t\} = \{\rho_i\}_{i=1}^k\cup \{\rho'_i\}_{i=1}^l$, such that the $\rho^i$'s are pairwise non-isomorphic unitary supercuspisdal representations, $r_i:\{1,\ldots,s_i\}\to \{1,\ldots,k\}$ are ascending injections such that $\{1,\ldots,k\} = \dot{\cup}_i \im r_i$\footnote{Here $\dot{\cup}$ stands for a disjoint union.} and $\rho_{r(i)}\cong \rho^i$ , and $r'_i:\{1,\ldots,s'_i\}\to \{1,\ldots,l\}$ are ascending injections such that $\{1,\ldots,l\} = \dot{\cup}_i \im r'_i$ and $\rho'_{r'(i)}\cong \rho^i$.

In particular, the above decomposition shows that it is enough to consider the case where all $\rho_i$'s and $\rho'_j$'s are isomorphic. In other words, we are free to assume for the rest of the proof that $\pi_1,\pi_2$ are of proper Arthur-type and that $\rho_i\cong \rho'_j\cong \rho$ for all $i,j$ and one fixed $\rho \in \mathcal{C}\,\cap \irr^u$.

Now, note that the given quasi-Speh representations satisfy $\pi_{\rho^\vee}^{a_k,b_k,c_k}\preceq \ldots\preceq \pi_{\rho^\vee}^{a_1,b_1,c_1}$ and $\pi_\rho^{a'_l,b'_l,c'_l}\preceq \ldots\preceq \pi^{a'_1,b'_1,c'_1}_\rho$.

Hence, by Proposition \ref{main-prop}, we have the quasi-Arthur type representation
\[
\sigma_2:=L(\gotM_{\rho^\vee}^{a_1,b_1,c_1} + \ldots + \gotM_{\rho^\vee}^{a_k,b_k,c_k})
\]
as the unique irreducible quotient of $\pi_{\rho^\vee}^{a_1,b_1,c_1} \times \cdots\times \pi_{\rho^\vee}^{a_k,b_k,c_k}$, while
\[
\sigma_1:=L(\gotM_\rho^{a'_1,b'_1,c'_1} + \ldots + \gotM_\rho^{a'_l,b'_l,c'_l})
\]
as the unique irreducible quotient of $\pi_\rho^{a'_1,b'_1,c'_1} \times \cdots\times \pi_\rho^{a'_l,b'_l,c'_l}$.

By applying Lemma \ref{mult-techlem}, we can deduce from the non-vanishing $\Hom$ space that $\sigma_1\nu^{1/2}\cong \sigma^\vee_2$. Finally, the statement follows from Corollary \ref{cor-main}.

\end{proof}

\subsection{Bernstein-Zelevinski filtration}\label{sect-filtr}

In order to study the morphism space above, we will make use of the analysis in \cite{BZ1} of restrictions of representations in $\irr$. Let us sketch the main ingredients used in that reference. The reader can also refer to \cite{chan-savin,prasad-ext} for very similar discussions.

Recall the $p$-adic \textit{mirabolic} groups $\{P_n\}$ situated within the inclusions $G_{n-1}<P_n< G_n$. In matrix form, $P_n$ is defined to be the subgroup of $G_n$ consisting of matrices whose bottom row is given by $(0\,\ldots\,0\,1)$.

Bernstein-Zelevinski defined families of exact functors
\[
\Phi^-: \mathfrak{S}(P_{n}) \to  \mathfrak{S}(P_{n-1})\qquad \Phi^+, \hat{\Phi}^+: \mathfrak{S}(P_{n}) \to  \mathfrak{S}(P_{n+1})
\]
\[
\Psi^-: \mathfrak{S}(P_{n}) \to  \mathfrak{S}(G_{n-1})\qquad \Psi^+: \mathfrak{S}(G_{n}) \to  \mathfrak{S}(P_{n+1})\;,
\]
together with a set of identities between them. We will not use the definition of these functors, but let recall they are all defined through suitable induction (in case of $+$ functors) or restriction of coinvariants (in case of $-$ functors). The functor $\Phi^+$ is the sub-functor of $\hat{\Phi}^+$ that takes the compactly supported sections of the induced representation.

In particular, $\Phi^-$ is both left adjoint to $\hat{\Phi}^+$ and right adjoint to $\Phi^+$, while $\Psi^-$ is left adjoint to $\Psi^+$.

In terms of these functors, for every $0<i\leq n$, the $i$-th derivative of a representation $\pi\in\mathfrak{R}(G_n)$ is given as
\[
\pi^{(i)}= \Psi^-(\Phi^-)^{i-1}(\pi|_{P_n})\in \mathfrak{R}(G_{n-i})\;,
\]
where $(\Phi^-)^{i-1}$ denotes the $i-1$-th consecutive composition of $\Phi^-$.

\begin{proposition}\label{BZfilt}
  For any representation $\tau\in \mathfrak{R}(P_n)$ on a vector space $V$, there is a filtration of $P_n$-representations
  \[
  \{0\}=V_{n}\subset V_{n-1}\subset \ldots \subset V_0 = V\;,
  \]
  such that
  \[
  V_i/V_{i+1} \cong  (\Phi^+)^i\Psi^+\Psi^-(\Phi^-)^{i}(\tau)\;,
  \]
  as a $P_n$-representation, for all $0\leq i< n$.
\end{proposition}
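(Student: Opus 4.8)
\emph{Overall strategy.} I would build the filtration by iterating a single short exact sequence of Bernstein--Zelevinski functors, using only the exactness of $\Phi^{+}$ and $\Phi^{-}$. The one external input I would invoke is the fundamental exact sequence of \cite[\S3]{BZ1}: for every $m\ge 1$ and every $\sigma\in\mathfrak{R}(P_{m})$ there is a short exact sequence of $P_{m}$-representations
\[
0\longrightarrow \Phi^{+}\Phi^{-}(\sigma)\longrightarrow \sigma\longrightarrow \Psi^{+}\Psi^{-}(\sigma)\longrightarrow 0,
\]
natural in $\sigma$; here the first arrow is the counit of the adjunction with $\Phi^{+}$ left adjoint to $\Phi^{-}$, and the second is the unit of the adjunction with $\Psi^{-}$ left adjoint to $\Psi^{+}$. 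It is essential that $\Phi^{+}\Phi^{-}(\sigma)$ occurs as the \emph{sub}object and $\Psi^{+}\Psi^{-}(\sigma)$ as the quotient. I would also use that $\Phi^{+}$ and $\Phi^{-}$ are exact functors, and that $(\Phi^{-})^{n}\tau=0$ for $\tau\in\mathfrak{R}(P_{n})$ (each application of $\Phi^{-}$ lowers the rank, and $\Phi^{-}$ vanishes on $\mathfrak{R}(P_{1})$, where the relevant unipotent is trivial and carries no non-degenerate character).

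\emph{The graded pieces.} For $0\le i\le n$ set
\[
V_{i}:=(\Phi^{+})^{i}(\Phi^{-})^{i}(\tau)\in\mathfrak{R}(P_{n}),
\]
so that $V_{0}=\tau$ (on the space $V$) and $V_{n}=(\Phi^{+})^{n}\bigl((\Phi^{-})^{n}\tau\bigr)=0$ by the vanishing just recalled. Applying the fundamental exact sequence to the object $\sigma:=(\Phi^{-})^{i}(\tau)\in\mathfrak{R}(P_{n-i})$ and then applying the exact functor $(\Phi^{+})^{i}$ yields a short exact sequence
\[
0\longrightarrow (\Phi^{+})^{i+1}(\Phi^{-})^{i+1}(\tau)\longrightarrow (\Phi^{+})^{i}(\Phi^{-})^{i}(\tau)\longrightarrow (\Phi^{+})^{i}\Psi^{+}\Psi^{-}(\Phi^{-})^{i}(\tau)\longrightarrow 0,
\]
that is, $0\to V_{i+1}\to V_{i}\to (\Phi^{+})^{i}\Psi^{+}\Psi^{-}(\Phi^{-})^{i}(\tau)\to 0$. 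Hence $V_{i+1}$ is canonically a $P_{n}$-subrepresentation of $V_{i}$, with quotient precisely the representation named in the statement.

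\emph{Assembling the filtration.} Composing the canonical inclusions $0=V_{n}\hookrightarrow V_{n-1}\hookrightarrow\cdots\hookrightarrow V_{1}\hookrightarrow V_{0}=V$ exhibits the $V_{i}$ as a decreasing chain of $P_{n}$-subrepresentations of $V$, and the previous paragraph supplies the isomorphisms $V_{i}/V_{i+1}\cong(\Phi^{+})^{i}\Psi^{+}\Psi^{-}(\Phi^{-})^{i}(\tau)$ for all $0\le i<n$, which is the assertion. As a consistency check, specializing to $\tau=\pi|_{P_{n}}$ with $\pi\in\mathfrak{R}(G_{n})$ and using $\Psi^{-}(\Phi^{-})^{i}(\pi|_{P_{n}})=\pi^{(i+1)}$ recovers the classical Bernstein--Zelevinski filtration of $\pi|_{P_{n}}$ with graded pieces $(\Phi^{+})^{k-1}\Psi^{+}(\pi^{(k)})$, $1\le k\le n$.

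\emph{Where the difficulty lies.} There is no substantial obstacle: essentially all the content is packaged in the fundamental exact sequence of \cite{BZ1}. The points requiring care are (i) invoking that sequence in the correct direction, with $\Phi^{+}\Phi^{-}$ as the subobject, and (ii) checking that it is sufficiently natural to survive iteration and composition with the exact functors $(\Phi^{+})^{i}$; after this the argument is a routine induction, the only remaining bookkeeping being that the canonical inclusions $V_{i+1}\hookrightarrow V_{i}$ compose into a genuine filtration of $V$.
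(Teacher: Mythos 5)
Your proposal is correct and is exactly the classical argument of \cite[3.5]{BZ1} that the paper is quoting without proof: iterate the natural short exact sequence $0\to\Phi^{+}\Phi^{-}\to\mathrm{Id}\to\Psi^{+}\Psi^{-}\to 0$, apply the exact functors $(\Phi^{+})^{i}$, and use $(\Phi^{-})^{n}\tau=0$ to terminate the filtration. The only point I would tighten is the justification of that vanishing (it is really the degenerate case $\mathfrak{R}(P_0)=0$ in Bernstein--Zelevinski's conventions rather than a property of $\Phi^-$ on $\mathfrak{R}(P_1)$), but this does not affect the argument.
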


Following the terminology of \cite{chan-savin}, let us define the \textit{left }$i$-th \textit{derivative} of $\pi\in \mathfrak{R}(G_n)$ by setting
\[
^{(i)}\pi := \left((\pi^\vee)^{(i)}\right)^\vee\;.
\]

\begin{proposition}\label{filt-frob}
  Let $\pi_1\in \mathfrak{R}(G_n)$ and $\pi_2\in \mathfrak{R}(G_{n-1})$ be given. Let $\{V_i\}$ be the filtration of $\pi_1|_{P_n}$ as described in Proposition \ref{BZfilt}. Then, we have a natural identification of homomorphism spaces
  \[
  \Hom_{G_{n-1}}(V_i/V_{i-1}, \pi_2)\cong \Hom_{G_{n-i-1}} (\nu^{1/2}\pi_1^{(i+1)}, ^{(i)}\pi_2)\;,
  \]
  for all $0\leq i< n$.

  In particular, a non-zero element in $\Hom_{G_{n-1}} (\pi_1|_{G_{n-1}},\pi_2)$ gives rise to a non-zero element in $\Hom_{G_{n-i-1}} (\nu^{1/2}\pi_1^{(i+1)}, ^{(i)}\pi_2)$, for some $i$.

\end{proposition}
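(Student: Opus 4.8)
The plan is to build the identification from the Bernstein-Zelevinski formalism in two stages: first rewrite $V_i/V_{i-1}$ using the subquotient description of Proposition \ref{BZfilt}, then apply adjunction repeatedly to strip off the $\Phi$ and $\Psi$ functors one at a time, matching each $\Phi^+$ on the source side with a $\Phi^-$ on the target side (and similarly for the $\Psi$'s). Concretely, by Proposition \ref{BZfilt} applied to $\tau = \pi_1|_{P_n}$ we have $V_i/V_{i-1}\cong (\Phi^+)^i\Psi^+\Psi^-(\Phi^-)^i(\pi_1|_{P_n})$ as $P_n$-representations. (One must be careful with the off-by-one indexing between the filtration-index convention of Proposition \ref{BZfilt} and the statement of Proposition \ref{filt-frob}; after fixing conventions the relevant graded piece is the one involving $(\Phi^-)^i$.) So the $\Hom$ space in question is
\[
\Hom_{G_{n-1}}\!\left((\Phi^+)^i\Psi^+\Psi^-(\Phi^-)^i(\pi_1|_{P_n}),\,\pi_2\right),
\]
where $\pi_2\in\mathfrak R(G_{n-1})$ is first restricted to $P_{n-1}$ implicitly (or, more precisely, one uses that $\Hom_{G_{n-1}}(X|_{P_{n-1}}\text{-stuff},\pi_2)$ can be rephrased via $\Psi^+$-adjunction — this is exactly the mechanism of \cite{chan-savin,prasad-ext}).

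The second stage is the adjunction cascade. Since $\Phi^-$ is right adjoint to $\Phi^+$, each outer $\Phi^+$ on the source can be traded for a $\Phi^-$ hitting $\pi_2$: applying this $i$ times turns the $\Hom$ into
\[
\Hom_{G_{n-i-1}\text{-context}}\!\left(\Psi^+\Psi^-(\Phi^-)^i(\pi_1|_{P_n}),\,(\Phi^-)^i(\text{stuff from }\pi_2)\right).
\]
Then $\Psi^-$ is left adjoint to $\Psi^+$, which lets us move the remaining $\Psi^+$ across and convert the whole expression into a $\Hom$ over $G_{n-i-1}$. Unwinding the defining formula $\pi^{(j)} = \Psi^-(\Phi^-)^{j-1}(\pi|_{P_n})$ on the source side gives exactly $\pi_1^{(i+1)}$ up to the twist by $\nu^{1/2}$ that is built into the normalization of $\Psi^\pm$ (this is where the character $\nu^{1/2}$ enters — it comes from the modular-character normalization of $\Psi^+\Psi^-$, as in \cite{BZ1}). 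On the target side, the composition of $(\Phi^-)^i$ with the dualities produces precisely the left derivative ${}^{(i)}\pi_2 = ((\pi_2^\vee)^{(i)})^\vee$; here one uses that $\Phi^-$ and $\Phi^+$ are swapped under the contragredient on $P$-representations (a standard fact from \cite{BZ1}, also used in \cite{chan-savin}), so that taking $(\Phi^-)^i$ of $\pi_2$ against the source corresponds, after dualizing, to the $i$-th derivative of $\pi_2^\vee$. Assembling these identifications yields the claimed
\[
\Hom_{G_{n-1}}(V_i/V_{i-1},\pi_2)\cong \Hom_{G_{n-i-1}}(\nu^{1/2}\pi_1^{(i+1)},\,{}^{(i)}\pi_2).
\]

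For the "in particular" clause: a non-zero $f\in\Hom_{G_{n-1}}(\pi_1|_{G_{n-1}},\pi_2)$ is in particular a non-zero $P_n$-to-$G_{n-1}$ (hence $P_{n-1}$) morphism from $V=V_0$; since the $V_i$ form a finite filtration with $V_n=0$, there is a largest $i$ with $f|_{V_i}\neq 0$, and then $f$ descends to a non-zero morphism on the quotient $V_i/V_{i+1}$ (reindex to match the lemma). Feeding this into the established isomorphism produces a non-zero element of $\Hom_{G_{n-i-1}}(\nu^{1/2}\pi_1^{(i+1)},{}^{(i)}\pi_2)$ for that $i$.

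\textbf{Main obstacle.} The serious bookkeeping — and the step I expect to be delicate — is keeping the normalizations straight: the precise powers of $\nu$ (the $\nu^{1/2}$ twist) emerging from the non-normalized versus normalized conventions for $\Phi^\pm,\Psi^\pm$, together with the index shift between Proposition \ref{BZfilt}'s $V_i/V_{i+1}$ and the $V_i/V_{i-1}$ in the statement, and the correct placement of the contragredient when identifying the target side with the \emph{left} derivative ${}^{(i)}\pi_2$. None of these is conceptually hard, but it is easy to be off by a twist or an index; the cleanest route is to verify the base case $i=0$ by hand (where the isomorphism reads $\Hom_{G_{n-1}}(\Psi^+\Psi^-(\pi_1|_{P_n}),\pi_2)\cong\Hom_{G_{n-1}}(\nu^{1/2}\pi_1^{(1)},\pi_2)$, essentially the definition of $\pi_1^{(1)}$ plus $\Psi^-\dashv\Psi^+$ adjunction) and then induct on $i$ using the $\Phi^-\dashv\Phi^+$ adjunction together with the compatibility of $\Phi^\pm$ with contragredients.
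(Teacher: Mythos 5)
Your overall strategy (graded pieces of the Bernstein--Zelevinski filtration, Frobenius reciprocity into $\mathfrak{R}(P_n)$, then an adjunction cascade, with a contragredient entering somewhere to produce the left derivative ${}^{(i)}\pi_2$) is the right one, but the cascade as you describe it cannot be executed: the adjunction goes the wrong way at the $\Psi$-step. After trading the outer $(\Phi^+)^i$ for $(\Phi^-)^i$ on the target via $\Phi^+\dashv\Phi^-$, you are left with $\Hom_{P_{n-i}}\bigl(\Psi^+(\pi_1^{(i+1)}),\,(\Phi^-)^iI(\pi_2)\bigr)$, and you propose to remove the remaining $\Psi^+$ from the source ``since $\Psi^-$ is left adjoint to $\Psi^+$''. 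That adjunction reads $\Hom(\Psi^-X,Y)\cong\Hom(X,\Psi^+Y)$: it strips $\Psi^+$ from the \emph{target} (equivalently $\Psi^-$ from the source), not $\Psi^+$ from the source. The right adjoint of $\Psi^+$ (inflation along the unipotent radical, suitably normalized) is the invariants functor, not the coinvariants functor $\Psi^-$, and these do not coincide for smooth representations of the non-compact unipotent group involved. So the middle of your argument has a genuine gap, and your base-case-plus-induction fallback runs into the same wall at every step $i\geq 1$.

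The paper's fix is to dualize the whole $\Hom$ space immediately after the first Frobenius reciprocity: $\Hom_{P_n}\bigl((\Phi^+)^i\Psi^+(\pi_1^{(i+1)}),I(\pi_2)\bigr)\cong\Hom_{P_n}\bigl(I(\pi_2)^\vee,\bigl((\Phi^+)^i\Psi^+(\pi_1^{(i+1)})\bigr)^\vee\bigr)$. By \cite[3.4]{BZ1} the dual of $(\Phi^+)^i\Psi^+(\cdot)$ is $(\hat{\Phi}^+)^i\Psi^+$ of the dual up to a $\nu$-twist (note that $\Phi^+$ dualizes to $\hat{\Phi}^+$, not to $\Phi^-$ as you suggest), and $I(\pi_2)^\vee\cong\nu^{-1}i(\pi_2^\vee)$ with $i$ the compact induction. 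Now every functor to be removed sits on the target, where the available adjunctions $\Phi^-\dashv\hat{\Phi}^+$ and $\Psi^-\dashv\Psi^+$ do apply; they deposit $\Psi^-(\Phi^-)^i$ onto the new source $i(\nu^{-1/2}\pi_2^\vee)$, and \cite[4.13(c)]{BZ1} (namely $\Phi^-\circ i\cong$ restriction to $P_{n-1}$) turns this into $(\pi_2^\vee)^{(i)}$. A final dualization then gives $\Hom_{G_{n-i-1}}(\nu^{1/2}\pi_1^{(i+1)},{}^{(i)}\pi_2)$, the $\nu^{1/2}$ arising from the twists in \cite[3.4]{BZ1} rather than only from the normalization of $\Psi^{\pm}$. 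Your treatment of the ``in particular'' clause is fine.
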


\begin{proof}
Let $I$ (respectively, $i$) denote the induction (respectively, compact induction) functor from $\mathfrak{R}(G_{n-1})$ to $\mathfrak{R}(P_n)$. Recall that by standard Mackey theory, the restriction functor $\mathfrak{R}(P_n)\to \mathfrak{R}(G_{n-1})$ is left adjoint to $I$ (see, e.g. \cite[1.9]{BZ1}).

Thus, by Proposition \ref{BZfilt},
\[
\Hom_{G_{n-1}}(V_i/V_{i-1}, \pi_2)\cong \Hom_{G_{n-1}}\left(\left.\left((\Phi^+)^i\Psi^+\left(\pi_1^{(i+1)}\right)\right)\right|_{G_{n-1}}, \pi_2\right)\cong
\]
\[
\cong \Hom_{P_{n}}\left((\Phi^+)^i\Psi^+\left(\pi_1^{(i+1)}\right), I(\pi_2)\right)\cong \Hom_{P_{n}}\left( I(\pi_2)^\vee, \left((\Phi^+)^i\Psi^+\left(\pi_1^{(i+1)}\right)\right)^\vee  \right)\;.
\]
Now, by \cite[3.4]{BZ1}, we know that
\[
\left((\Phi^+)^i\Psi^+\left(\pi_1^{(i+1)}\right)\right)^\vee \cong \nu^{-1} (\hat{\Phi}^+)^i\Psi^+\left(\left(\pi_1^{(i+1)}\right)^\vee\right)\cong \nu^{-1/2} (\hat{\Phi}^+)^i\Psi^+\left(\nu^{-1/2}\left(\pi_1^{(i+1)}\right)^\vee\right)
\]
holds, while similarly $I(\pi_2)^\vee\cong \nu^{-1}i(\pi_2^\vee)\cong \nu^{-1/2}i(\nu^{-1/2}\pi_2^\vee)$.

From the above mentioned adjunctions of functors, we now see that
\[
\Hom_{G_{n-1}}(V_i/V_{i-1}, \pi_2)\cong \Hom_{P_{n}}\left( i(\nu^{-1/2}\pi_2^\vee), (\hat{\Phi}^+)^i\Psi^+\left(\nu^{-1/2}\left(\pi_1^{(i+1)}\right)^\vee \right) \right) \cong
\]
\[
\cong\Hom_{G_{n-i-1}} \left(  \Psi^-(\Phi^-)^i\left( i(\nu^{-1/2}\pi_2^\vee)\right) ,  \nu^{-1/2} \left(\pi_1^{(i+1)}\right)^\vee\right) \;.
\]
Yet, it follows from \cite[4.13(c)]{BZ1} that $\Phi^-\left( i(\nu^{-1/2}\pi_2^\vee)\right)\cong\pi_2^\vee|_{P_{n-1}}$. Hence,
\[
\Hom_{G_{n-1}}(V_i/V_{i-1}, \pi_2)\cong  \Hom_{G_{n-i-1}} \left( \left(\pi_2^\vee\right)^{(i)} ,   \nu^{-1/2}\left(\pi_1^{(i+1)}\right)^\vee\right) \cong
\]
\[
\cong \Hom_{G_{n-i-1}} (\nu^{1/2}\pi_1^{(i+1)},\; ^{(i)}\pi_2)\;.
\]

\end{proof}

\begin{corollary}\label{filt-frob-cor}
   Let $\pi_1\in \mathfrak{R}(G_n)$ and $\pi_2\in \mathfrak{R}(G_{n-1})$ be given. Let $\{V_i\}$ be the filtration of $\pi_1|_{P_n}$ as described in Proposition \ref{BZfilt}. Then, we have a natural identification of homology spaces
  \[
  \Ext^1_{G_{n-1}}(V_i/V_{i-1}, \pi_2)\cong \Ext^1_{G_{n-i-1}} (\nu^{1/2}\pi_1^{(i+1)}, ^{(i)}\pi_2)\;,
  \]
  for all $0\leq i< n$.
\end{corollary}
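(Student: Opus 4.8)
The plan is to reproduce the chain of isomorphisms from the proof of Proposition~\ref{filt-frob}, upgrading each link from an isomorphism of $\Hom$-spaces to one of $\Ext^1$-groups (in fact of all $\Ext^m$). What makes this automatic is that every functor occurring in that proof is \emph{exact}: the restriction functors $\mathfrak{S}(P_n)\to\mathfrak{S}(G_{n-1})$, the induction functor $I$ and the compact induction functor $i$ from $\mathfrak{S}(G_{n-1})$ to $\mathfrak{S}(P_n)$, the Bernstein--Zelevinski functors $\Phi^\pm,\hat\Phi^+,\Psi^\pm$ (recalled as such in Section~\ref{sect-filtr}), and the smooth contragredient $V\mapsto V^\vee$, the last being exact since it is the composite of the algebraic dual with the exact functor of smooth vectors. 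I would perform all $\Ext$-computations inside the categories $\mathfrak{S}(\bullet)$ of all smooth representations, which have enough injectives and enough projectives; since an extension of a finite-length representation by a finite-length representation is again of finite length, the $\Ext^1$-groups so computed agree with the Yoneda $\Ext^1$ appearing in the statement.

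Unwinding the proof of Proposition~\ref{filt-frob}, each step is of one of three types: (a) an adjunction isomorphism $\Hom(FX,Y)\cong\Hom(X,GY)$ for an adjoint pair $F\dashv G$ of exact functors --- used for restriction $\dashv I$, for $\Phi^-\dashv\hat\Phi^+$, for $\Psi^-\dashv\Psi^+$, and hence for the composite $\Psi^-(\Phi^-)^i\dashv(\hat\Phi^+)^i\Psi^+$; (b) the natural identification $\Hom_H(X,Y^\vee)\cong\Hom_H(Y,X^\vee)$, valid for arbitrary smooth representations $X,Y$ of any of the groups involved; and (c) the replacement of an argument of $\Hom$ by a naturally isomorphic object, via the identities $\bigl((\Phi^+)^i\Psi^+(\tau)\bigr)^\vee\cong\nu^{-1}(\hat\Phi^+)^i\Psi^+(\tau^\vee)$, $I(\pi_2)^\vee\cong\nu^{-1}i(\pi_2^\vee)$ and $\Phi^-\bigl(i(\nu^{-1/2}\pi_2^\vee)\bigr)\cong\pi_2^\vee|_{P_{n-1}}$ from \cite{BZ1}, together with the defining formula $\tau^{(j)}=\Psi^-(\Phi^-)^{j-1}(\tau|_{P})$ for derivatives.

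Each of these three moves promotes to $\Ext^1$. For (c) there is nothing to prove: a natural isomorphism between two objects induces an isomorphism of the corresponding $\Ext$-groups, and applying an exact functor to a resolution again yields a resolution. For (a), a right adjoint of an exact functor preserves injective objects and, being itself exact, carries an injective resolution of $Y$ to an injective resolution of $GY$, giving $\Ext^m(FX,Y)\cong\Ext^m(X,GY)$ (dually through projective resolutions if preferred). For (b), the contragredient is an exact contravariant functor on $\mathfrak{S}(H)$ sending projectives to injectives --- indeed $\Hom_H(-,P^\vee)\cong\Hom_H(P,(-)^\vee)$ is exact for $P$ projective --- so it carries a projective resolution of $Y$ to an injective resolution of $Y^\vee$, whence $\Ext^m_H(Y,X^\vee)\cong\Ext^m_H(X,Y^\vee)$. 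Threading these identifications through the very same sequence of steps as in Proposition~\ref{filt-frob} produces the claimed isomorphism $\Ext^1_{G_{n-1}}(V_i/V_{i-1},\pi_2)\cong\Ext^1_{G_{n-i-1}}(\nu^{1/2}\pi_1^{(i+1)},{}^{(i)}\pi_2)$. The proof is thus pure bookkeeping; the only point requiring a modicum of attention is that intermediate objects such as $I(\pi_2)$ need not be admissible, so the contragredient must be handled on the full smooth category rather than via the anti-equivalence on admissible representations --- which causes no difficulty, precisely because $I(\pi_2)^\vee$ is rewritten as $\nu^{-1}i(\pi_2^\vee)$ before being used further, exactly as in the proof of Proposition~\ref{filt-frob}.
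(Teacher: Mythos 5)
Your proposal is correct and follows essentially the same route as the paper, which likewise proves the corollary by rerunning the proof of Proposition \ref{filt-frob} and invoking the fact that adjunctions of exact functors yield natural identifications of $\Ext$-spaces (citing \cite[Proposition 2.2]{prasad-ext}). You merely make explicit the bookkeeping the paper leaves implicit, including the correct handling of the contragredient step and of non-admissible intermediate objects.
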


\begin{proof}
The proof of Proposition \ref{filt-frob} will work for this case as well, after recalling that adjunctions of exact functors give natural identifications of $\Ext$-spaces, as well as $\Hom$-spaces. See e.g. \cite[Proposition 2.2]{prasad-ext}.

\end{proof}

\subsection{Branching laws}\label{sect-branch}
We will first treat the Arthur-type case which was considered in Conjecture \ref{main-conj}.
\begin{theorem}\label{thm-main}
Let $\pi_1,\pi_2\in \irr^u$ be two Arthur-type representations, such that $\pi_1\in \irr(G_n)$ and $\pi_2\in \irr(G_{n-1})$.

If
\[
\Hom_{G_{n-1}} (\pi_1|_{G_{n-1}},\pi_2)\neq \{0\}
\]
holds, then $(\pi_1,\pi_2)$ must be in GGP position.
\end{theorem}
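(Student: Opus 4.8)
The plan is to deduce Theorem~\ref{thm-main} directly from the two preparatory results established above: the Bernstein--Zelevinski filtration analysis (Proposition~\ref{filt-frob}) and the derivative-level combinatorial statement (Theorem~\ref{thm-deri}). In other words, the restriction problem for the pair $(\pi_1,\pi_2)$ is to be transported, at the cost of passing to general linear groups of smaller rank, into a question about morphisms between Bernstein--Zelevinski derivatives, where the machinery of Section~\ref{sect-work} (ultimately Proposition~\ref{main-prop} and Corollary~\ref{cor-main}) has already been arranged so as to apply.

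Concretely, suppose $\Hom_{G_{n-1}}(\pi_1|_{G_{n-1}},\pi_2)\neq\{0\}$. Restricting $\pi_1$ to the mirabolic group $P_n$ and using the filtration $\{V_i\}$ of Proposition~\ref{BZfilt}, any nonzero $G_{n-1}$-morphism $\pi_1|_{G_{n-1}}\to\pi_2$ induces a nonzero map on some graded piece $V_i/V_{i-1}$ of the filtration: taking the largest index at which the morphism fails to vanish on $V_i$, it factors through the corresponding quotient and remains nonzero there. By the identification in Proposition~\ref{filt-frob}, this produces a nonzero element of
\[
\Hom_{G_{n-i-1}}\bigl(\nu^{1/2}\pi_1^{(i+1)},\,{}^{(i)}\pi_2\bigr)
\]
for some $0\le i<n$. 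Since $\pi_1,\pi_2\in\irr^u$ are Arthur-type by hypothesis, Theorem~\ref{thm-deri} applies with the derivative index $i+1$ on $\pi_1$ and the left-derivative index $i$ on $\pi_2$, and it forces $(\pi_1,\pi_2)$ to be in GGP position, which is precisely the assertion.

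The only points needing minor care are bookkeeping: verifying that the ``in particular'' clause of Proposition~\ref{filt-frob} is literally applicable here (it is, once one notes that a nonzero morphism out of $V$ must survive to some subquotient of the finite filtration), and that the numerical shift between the two derivative indices $\pi_1^{(i+1)}$ and ${}^{(i)}\pi_2$ matches the shift already built into the statement of Proposition~\ref{filt-frob} (which it does, so the two representations live over the common group $G_{n-i-1}$). There is no substantive obstacle localized in this theorem itself; the genuine difficulty has been pushed back into Theorem~\ref{thm-deri}, whose proof rests on Proposition~\ref{main-prop} and hence on the quantum affine algebra input of Hernandez, together with the derivative computations of Proposition~\ref{deriv-speh} and the multisegment bookkeeping of Corollary~\ref{cor-main}. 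Thus the proof of Theorem~\ref{thm-main} is a short synthesis of already-available ingredients, establishing one direction---necessity of the GGP position---of Conjecture~\ref{main-conj} in the Arthur-type case.
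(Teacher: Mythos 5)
Your argument is correct and is essentially identical to the paper's own proof: both pass through the last part of Proposition~\ref{filt-frob} to obtain a nonzero element of $\Hom_{G_{n-i-1}}(\nu^{1/2}\pi_1^{(i+1)},\,{}^{(i)}\pi_2)$ for some $i$, and then invoke Theorem~\ref{thm-deri}. Your extra remarks on surviving to a graded piece of the finite filtration and on the index bookkeeping are exactly the content already packaged into the ``in particular'' clause of Proposition~\ref{filt-frob}.
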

\begin{proof}
The last part of Proposition \ref{filt-frob} implies that under the stated condition \newline $\Hom_{G_{n-i-1}} (\nu^{1/2}\pi_1^{(i+1)}, ^{(i)}\pi_2)$ must be non-zero, for a certain $i$. Hence, the result follows from Theorem \ref{thm-deri}.
\end{proof}

The above theorem can also be extended into a branching law governing the irreducible unitarizable quotients of a restriction of any unitarizable irreducible representation.

\begin{theorem}\label{thm-main-ext}
Let $\pi,\sigma\in \irr^u$ be two representations, such that $\pi\in \irr(G_n)$ and $\sigma\in \irr(G_{n-1})$.

Let
 \[
\pi \cong \pi_0 \times \pi_1(\alpha_1)\times\cdots\times \pi_k(\alpha_k)\;,
 \]
 \[
\sigma \cong \sigma_0 \times \sigma_1(\beta_1)\times\cdots\times \sigma_l(\beta_l)\;,
 \]
be their decomposition as described in Section \ref{sect-tadic}, that is, $\pi_i, \sigma_i\in \irr^u$ are Arthur-type representations and $\{\alpha_i\},\,\{\beta_i\}$ are sets of real numbers.

If
\[
\Hom_{G_{n-1}} (\pi|_{G_{n-1}},\sigma)\neq \{0\}
\]
holds, then
\begin{enumerate}
  \item\label{item1} The pair $(\pi_0,\sigma_0)$ is in GGP position.
  \item\label{item2} For every $i,j$ for which $\alpha_i = \beta_j$ holds, the pair $(\pi_i, \sigma_j)$ is in GGP position.
  \item\label{item3} If $\alpha_i\not\in \{\beta_1,\ldots,\beta_l\}$ for some $i$, then $\pi_i$ is generic (equivalently, $\pi_i$ is tempered, or equivalently, $\pi_i(\alpha_i)$ is generic), i.e.
  \[
  \pi_i = L(\gotM^{a_1,1}_{\rho_1} + \ldots + \gotM^{a_t,1}_{\rho_t})\;.
  \]
  \item\label{item4} Similarly, if $\beta_j\not\in \{\alpha_1,\ldots,\alpha_k\}$ for some $j$, then $\sigma_j$ is generic.
\end{enumerate}

\end{theorem}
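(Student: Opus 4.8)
The plan is to reduce this statement to Theorem \ref{thm-main} by an argument involving supercuspidal support, the decomposition of $\pi$ and $\sigma$ into Arthur-type ``slices'', and the $\Ext$-vanishing tools of Section \ref{sect-filtr}. First I would pass through the Bernstein--Zelevinski filtration exactly as in the proof of Theorem \ref{thm-main}: the hypothesis $\Hom_{G_{n-1}}(\pi|_{G_{n-1}},\sigma)\neq\{0\}$ together with the last part of Proposition \ref{filt-frob} yields a non-zero element of $\Hom_{G_{n-i-1}}(\nu^{1/2}\pi^{(i+1)},\,{}^{(i)}\sigma)$ for some $i$. Now expand the derivatives of $\pi$ and $\sigma$ using the Leibniz rule (Proposition \ref{leibn}) against the factorizations $\pi=\pi_0\times\pi_1(\alpha_1)\times\cdots\times\pi_k(\alpha_k)$ and $\sigma=\sigma_0\times\sigma_1(\beta_1)\times\cdots\times\sigma_l(\beta_l)$, noting that $\pi_i(\alpha_i)=\pi_i\nu^{\alpha_i}\times\pi_i\nu^{-\alpha_i}$, and similarly for $\sigma$. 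Each constituent of $\pi^{(i+1)}$ is a product of (shifted) quasi-Speh representations of the form $\nu^{\pm\alpha_t}\pi^{a,b,c}_{\rho_t}$ and of derivatives of $\pi_0$; likewise for ${}^{(i)}\sigma$. One then reads off from the resulting non-zero morphism, via Lemma \ref{mult-techlem}, an equality of multisegments obtained as sums over the factors.

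The key organizing principle is supercuspidal support, which I would use via Corollary \ref{disjsupp-cor} to split off independent blocks: two factors $\nu^{\pm\alpha_t}\pi^{\bullet}_{\rho_t}$ and $\nu^{\pm\beta_s}\pi^{\bullet}_{\rho'_s}$ can interact in the morphism space only when their (real parts of the) supercuspidal supports are ``close'', which — because $\pi_t,\sigma_s$ are of Arthur-type and hence have integral supercuspidal support after the $\nu^{1/2}$ twist — forces $\alpha_t\equiv\beta_s \pmod{\Z}$ and in fact $\alpha_t=\beta_s$ given the constraints $0<\alpha_t,\beta_s<1/2$. (The factor $\pi_0$ and the $\nu^{1/2}$-twist contribute the half-integral part; this is exactly the role played by the shift $\nu^{1/2}$ already appearing in Theorem \ref{thm-deri}.) In this way the single morphism space factors, block by block, into a tensor product of morphism spaces of the shape appearing in Theorem \ref{thm-deri}. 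For the block built from $\pi_0$ and $\sigma_0$, the argument of Theorem \ref{thm-deri} applies verbatim and gives item \eqref{item1}. For a block where $\alpha_i=\beta_j$, the relevant slice of $\pi$ is $\pi_i(\alpha_i)$ and of $\sigma$ is $\sigma_j(\beta_j)$; applying Theorem \ref{thm-deri} to the $\nu^{\alpha_i}$-translated pieces (using $Z(\gotM\nu^s)=Z(\gotM)\nu^s$ and the fact that the GGP-position condition is unchanged under a common twist) delivers item \eqref{item2}.

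The remaining items \eqref{item3} and \eqref{item4} concern the ``unbalanced'' exponents, and here is where I expect the main obstacle. When $\alpha_i\notin\{\beta_1,\ldots,\beta_l\}$, the factor $\pi_i\nu^{\alpha_i}\times\pi_i\nu^{-\alpha_i}$ has no partner on the $\sigma$-side with matching exponent, so in the multisegment equality extracted above, the segments coming from $\pi_i\nu^{\pm\alpha_i}$ must be accounted for \emph{internally}, i.e. the derivative operation must have annihilated them or merged them with the $\mp\alpha_i$ copy. Working through the explicit derivative formula of Proposition \ref{deriv-speh} and the shape of the quasi-Speh multisegments $\gotM^{a,b,c}_\rho$, one sees that the only way a full Arthur slice $\pi_i$ can ``cancel'' against its mirror twist $\nu^{-\alpha_i}$ under the derivative-and-restriction process is if $\pi_i$ is a product of \emph{tempered} (i.e. $b=1$) Speh representations — precisely the condition $\pi_i=L(\sum_t\gotM^{a_t,1}_{\rho_t})$. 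Making this last deduction rigorous is the technical heart: it amounts to showing that for a non-tempered Speh representation $\pi^{a,b}_\rho$ with $b>1$, no constituent of $(\pi^{a,b}_\rho\nu^{\alpha}\times\pi^{a,b}_\rho\nu^{-\alpha})^{(\bullet)}$ can match, as a multisegment, something supported purely at the other exponents — one argues by looking at the extremal segments (largest/smallest $e_\rho$, $b_\rho$ values) of the quasi-Speh constituents, which are unavailable to any $\sigma$-side factor when $\alpha_i$ is genuinely new. Item \eqref{item4} follows from item \eqref{item3} by applying the Gelfand--Kazhdan involution $\overline{\eta}$ together with the contragredient, which swaps the roles of $\pi$ and $\sigma$ (replacing $\Hom_{G_{n-1}}(\pi|_{G_{n-1}},\sigma)$ by $\Hom_{G_{n-1}}(\bar\sigma|_{G_{n-1}},\bar\pi)$ up to the appropriate twist), and noting that genericity, GGP position, and the set of exponents $\{\alpha_i\}$ are all preserved under these operations.
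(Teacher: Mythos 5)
Your overall strategy coincides with the paper's: Proposition \ref{filt-frob}, then the Leibniz rule, then disjointness of supercuspidal supports across distinct exponents to factor the $\Hom$ space via Corollary \ref{disjsupp-cor}, and finally Theorem \ref{thm-deri} applied blockwise for items \eqref{item1} and \eqref{item2} (including the extra splitting of $\pi_i(\alpha)$ into its $\nu^{\pm\alpha}$ halves, which again have disjoint supports). Where you diverge is in items \eqref{item3}--\eqref{item4}, and there you make the argument harder than it is. Since $\pi_i\nu^{\alpha_i}$ and $\pi_i\nu^{-\alpha_i}$ differ by $\nu^{2\alpha_i}$ with $0<2\alpha_i<1$, their supports are already disjoint, so the ``merging with the mirror copy'' scenario you want to rule out cannot occur; and once the support of $\pi_i(\alpha_i)^{(c_i)}$ is disjoint from that of every factor on the $\sigma$-side, the non-vanishing of the relevant constituent $\Hom$ space forces $\pi_i(\alpha_i)^{(c_i)}\in\irr(G_0)$ outright --- no extremal-segment analysis of constituents is needed. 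Genericity of $\pi_i$ then drops out of Proposition \ref{deriv-speh}: the highest non-zero derivative of $\pi^{a,b}_\rho$ is $\pi^{a,b-1}_\rho\nu^{-1/2}$, which lies in $\irr(G_0)$ only when $b=1$. This is exactly how the paper handles \eqref{item3}, and \eqref{item4} follows by running the identical support argument on the right-hand side of the $\Hom$ space; your detour through a Gelfand--Kazhdan swap of $\Hom_{G_{n-1}}(\pi|_{G_{n-1}},\sigma)$ with a $\Hom$ space in the reversed direction invokes a duality for the restriction problem that is not established in the paper and is simply unnecessary here.
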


\begin{proof}
By Proposition \ref{filt-frob} we know that $\Hom (\nu^{1/2}\pi^{(i+1)}, ^{(i)}\sigma)$ is non-zero, for some $i$.

By Proposition \ref{leibn} that means
\[
H:= \Hom \left(\nu^{1/2}\left(\pi_0^{(c_0)}\times\pi_1(\alpha_1)^{(c_1)}\times\cdots\times \pi_k(\alpha_k)^{(c_k)}\right) ,\right.
\]
\[
\left. ^{(d_0)}\sigma_0 \times  \,^{(d_1)}\left(\sigma_1(\beta_1)\right)\times\cdots\times \,^{(d_l)}\left(\sigma_l(\beta_l)\right)\right)\neq \{0\}
\]
holds, for some $c_0,\ldots,c_k,d_0,\ldots,d_l$.

It is easy to see that given Arthur-type representations $\tau, \tau',\tau''$ and numbers $0<\gamma < \gamma'<1/2$, the representations $\tau, \tau'(\gamma'), \tau''(\gamma'')$ have pairwise disjoint supecuspidal supports. The same clearly holds for their derivatives.

Hence, for $\alpha_i\not\in \{\beta_1,\ldots,\beta_l\}$, we must have $\pi_i(\alpha_i)^{(c_i)}\in \irr(G_0)$. Claim \eqref{item3} then follows, with claim \eqref{item4} following similarly.

Also, arguing as in the proof of Theorem \ref{thm-deri}, using Corollary \ref{disjsupp-cor}, we see that
\[
H = \Hom \left( \nu^{1/2} \pi_0^{(c_0)}, \,^{(d_0)}\sigma_0\right)\otimes\left( \bigotimes_{(i,j)\,:\, \alpha_i=\beta_j} \Hom \left( \nu^{1/2}\pi_i(\alpha_i)^{(c_i)}, \,^{(d_j)}\left(\sigma_j(\beta_j)\right)\right)\right)
\]

Claim \eqref{item1} now follows from Theorem \ref{thm-deri}.

In order to establish claim \eqref{item2}, we need to note that for $\alpha = \alpha_i = \beta_j$, by Proposition \ref{leibn} and Corollary \ref{disjsupp-cor}, we have
\[
\Hom \left( \nu^{1/2}\pi_i(\alpha)^{(c_i)}, \,^{(d_j)}\left(\sigma_j(\alpha)\right)\right)\neq \{0\}
\implies \left\{\begin{array}{ll} \Hom \left( \nu^{1/2+\alpha}\pi_i^{(c)}, \nu^{\alpha}\,\left(^{(d)}\sigma_j\right)\right)\neq\{0\} \\ \Hom \left( \nu^{1/2-\alpha}\pi_i^{(c')}, \nu^{-\alpha}\,\left(^{(d')}\sigma_j\right)\right)\neq\{0\} \end{array}\right.\;,
\]
for some $c+c'=c_i$ and $d+d' =d_j$.
The claim then follows again from Theorem \ref{thm-deri}.

\end{proof}

\subsection{Converse direction}

Let us first state a certain corollary from a discussion in \cite{LM2}.
\begin{lemma}\label{LMlem}
  Let $\sigma_1,\sigma_2\in \irr^u$ be two Speh representations. Then, $\nu^{1/2}\sigma_1\times \sigma_2$ is irreducible. In particular, $\nu^{1/2}\sigma_1\times \sigma_2 \cong \sigma_2\times \nu^{1/2}\sigma_1$.
\end{lemma}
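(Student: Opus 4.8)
The plan is to reduce the assertion to the Lapid--M\'{i}nguez criterion of Theorem~\ref{lap-min}, after dealing separately with the cases in which the two factors cannot interact. Write $\sigma_i = \pi^{a_i,b_i}_{\rho_i}$ with $\rho_i \in \irr^u\cap\mathcal{C}$. All supercuspidal representations in $\supp(\nu^{1/2}\sigma_1)$ are of the form $\rho_1\nu^{r}$ with $r$ ranging over a single coset of $\Z$, determined by the parity of $a_1+b_1+1$, and likewise those in $\supp(\sigma_2)$ are of the form $\rho_2\nu^{r}$ with $r$ in a coset determined by the parity of $a_2+b_2$. By the supercuspidal-support irreducibility criterion recalled in Section~\ref{sect-backg}, $\nu^{1/2}\sigma_1\times\sigma_2$ is automatically irreducible whenever these two cosets are distinct; and since $\rho_1,\rho_2$ are unitarizable they cannot differ by a non-unitary unramified twist, so the cosets agree only when $\rho_1\cong\rho_2$, say equal to $\rho$, and $a_1+b_1\not\equiv a_2+b_2\pmod 2$. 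Henceforth I treat this remaining case.

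I would then transfer the problem to the Zelevinski classification: arguing with the Zelevinski involution exactly as in the proofs of Corollary~\ref{cor-generalhern} and Proposition~\ref{replace}, and invoking Proposition~\ref{GK}, irreducibility of $\nu^{1/2}\sigma_1\times\sigma_2$ is equivalent to irreducibility of $Z(\gotM_1)\times Z(\gotM_2)$, where $\gotM_1 := \gotM^{a_1,b_1}_\rho\nu^{1/2}$ and $\gotM_2 := \gotM^{a_2,b_2}_\rho$. The parity condition guarantees that $\gotM_1,\gotM_2$ are ladder multisegments supported on one common line; write $\gotM_1 = \tilde\Delta_1 + \cdots + \tilde\Delta_{b_1}$ with $\tilde\Delta_{b_1}\prec\cdots\prec\tilde\Delta_1$ and all $\tilde\Delta_i$ of length $a_1$, and $\gotM_2 = \Delta'_1 + \cdots + \Delta'_{b_2}$ with $\Delta'_{b_2}\prec\cdots\prec\Delta'_1$ and all $\Delta'_j$ of length $a_2$; then $e_\rho(\tilde\Delta_i) - e_\rho(\Delta'_j)$ is an integer depending only on $i-j$.

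The heart of the proof is to produce matching functions, in the sense of Theorem~\ref{lap-min}, for both bipartite graphs $(X_{\gotM_1;\gotM_2},Y_{\gotM_1;\gotM_2},\leftrightarrow)$ and $(X_{\gotM_2;\gotM_1},Y_{\gotM_2;\gotM_1},\leftrightarrow)$. The first yields $Z(\gotM_1+\gotM_2) = \soc(Z(\gotM_1)\times Z(\gotM_2))$ and the second yields $Z(\gotM_1+\gotM_2) = \soc(Z(\gotM_2)\times Z(\gotM_1))$; applying the Gelfand--Kazhdan contragredient functor $\overline{\eta}$ to the second identity shows that $Z(\gotM_1+\gotM_2)$ is also the maximal semisimple quotient of $Z(\gotM_1)\times Z(\gotM_2)$. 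Since $Z(\gotM_1+\gotM_2)$ appears with multiplicity one in this product (Section~\ref{sect-multis}), a representation whose socle and maximal semisimple quotient both coincide with a multiplicity-one constituent is irreducible (otherwise that constituent would lie both inside the radical and as the head quotient, contradicting multiplicity one, or else it would split off as a direct summand, contradicting simplicity of the socle), and we are done. To construct the matching functions one uses that all segments of $\gotM_1$ share a common length and all segments of $\gotM_2$ share a common length, so that membership of $(i,j)$ in $X_{\gotM_1;\gotM_2}$ — and in $Y_{\gotM_1;\gotM_2}$ — depends only on $i-j$; hence each of $X_{\gotM_1;\gotM_2}$, $Y_{\gotM_1;\gotM_2}$ is the intersection of the index rectangle $\{1,\dots,b_1\}\times\{1,\dots,b_2\}$ with a band of consecutive diagonals, and the band for $Y_{\gotM_1;\gotM_2}$ is the band for $X_{\gotM_1;\gotM_2}$ shifted by one. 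The rule $(i,j)\mapsto(i,j+1)$, with the entries having $j = b_2$ redirected to $(i-1,b_2)$, then gives the required injection once one checks that it never leaves the index rectangle, and the second graph is handled the same way; this is parallel to, and a little simpler than, the combinatorial computation in the proof of Proposition~\ref{replace}.

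Finally, once $\nu^{1/2}\sigma_1\times\sigma_2$ is irreducible, the isomorphism $\nu^{1/2}\sigma_1\times\sigma_2\cong\sigma_2\times\nu^{1/2}\sigma_1$ follows immediately from the last assertion of Proposition~\ref{GK}. The step I expect to require the most care is the last one of the third paragraph: verifying that the diagonal-band matching stays inside the index rectangle in every range of $(a_1,b_1,a_2,b_2)$, in particular when one Speh multisegment is far ``taller'' or ``wider'' than the other. This is elementary but needs a short case split on the sign of $a_2-a_1$ and on the relative sizes of $b_1$ and $b_2$, together with the observation that the extremal configuration $\tilde\Delta_1\prec\Delta'_{b_2}$ (and its mirror image) never occurs.
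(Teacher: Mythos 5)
Your architecture is sound but genuinely different from the paper's, which settles the lemma in a few lines by quoting a different result of Lapid--M\'{i}nguez, namely \cite[Corollary 5.10]{LM2}: a product of two ladder representations is irreducible if and only if their supercuspidal supports, regarded as single segments, do not precede each other. One then only has to observe that $\supp(\nu^{1/2}\sigma_1)=\left[-\frac{a_1+b_1-3}2,\frac{a_1+b_1-1}2\right]_{\rho_1}$ and $\supp(\sigma_2)=\left[-\frac{a_2+b_2-2}2,\frac{a_2+b_2-2}2\right]_{\rho_2}$ cannot precede each other in either direction. Your route --- running the matching criterion of Theorem~\ref{lap-min} for both orderings, identifying the socle and the cosocle with the multiplicity-one constituent $Z(\gotM_1+\gotM_2)$, and deducing irreducibility --- is heavier but logically valid; in particular your initial parity reduction, the socle/cosocle/multiplicity-one argument, and the structural observation that $X_{\gotM_1;\gotM_2}$ and $Y_{\gotM_1;\gotM_2}$ are diagonal bands (with the $Y$-band equal to the $X$-band shifted by one) are all correct.

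The gap is in the explicit matching function, precisely at the spot you flagged as delicate: the rule is not injective. If $(i,b_2-1)$ and $(i+1,b_2)$ both lie in $X_{\gotM_1;\gotM_2}$ --- and they sit on the same diagonal $i-j=i-b_2+1$, so whenever that diagonal belongs to the $X$-band and both points lie in the rectangle they both do --- then your rule sends both to $(i,b_2)$. Concretely, take $\sigma_1=\pi^{1,5}_\rho$ and $\sigma_2=\pi^{1,2}_\rho$: here $X_{\gotM_1;\gotM_2}=\{(4,1),(5,2)\}$, $Y_{\gotM_1;\gotM_2}=\{(3,1),(4,2)\}$, and your rule maps both elements of $X$ to $(4,2)$, whereas the correct matching is $(4,1)\mapsto(3,1)$, $(5,2)\mapsto(4,2)$. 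The repair is to choose the direction of the match uniformly along each diagonal rather than according to whether $j=b_2$: since any matching must carry the $X$-points on the diagonal $i-j=d$ to $Y$-points on the diagonal $i-j=d-1$, set $(i,j)\mapsto(i-1,j)$ on every $X$-diagonal with $d\geq 1$ and $(i,j)\mapsto(i,j+1)$ on every $X$-diagonal with $d\leq 0$. Injectivity is then automatic, and the check that the image never leaves the rectangle follows from the fact that the two lower thresholds cutting out the $X$-band (the one coming from the condition on $b_\rho$ and the one coming from the condition on $e_\rho$) sum to $b_1-b_2+3$, which excludes the problematic diagonals; this is the same computation that rules out the extremal configuration $\tilde\Delta_1\prec\Delta'_{b_2}$. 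With that correction your proof goes through.
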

\begin{proof}
  Let us write $\sigma_1 = L(\gotM^{a_1,b_1}_{\rho_1})$ and $\sigma_2= L(\gotM^{a_2,b_2}_{\rho_2})$. If $\rho_1\not\cong \rho_2$ holds, the statement is clear. Otherwise, let us apply the irreducibility criterion from \cite[Corollary 5.10]{LM2}.

  For the purposes of this proof, let us view $\supp(\nu^{1/2})\sigma_1$ and $\supp(\sigma_2)$ as elements of $\seg$ in a natural way. Namely,
\[
\supp(\nu^{1/2}\sigma_1) = \left[-\frac{a_1+b_1-3}2, \frac{a_1+b_1-1}2\right]_{\rho_1}\;,\quad\supp(\sigma_2) = \left[-\frac{a_2+b_2-2}2, \frac{a_2+b_2-2}2\right]_{\rho_2}\;.
\]
By \cite[Corollary 5.10]{LM2}, it is enough to show that as segments, $\supp(\nu^{1/2}\sigma_1)$ and $\supp(\sigma_2)$ do not precede each other. If $a_1+b_1$ and $a_2+b_2$ have distinct parities, it follows immediately. Otherwise, it is easy to see that the inequalities implied by demanding either of the segments to precede the other do not have a solution.

\end{proof}

\begin{proposition}\label{indeed-quo}
Let $(\pi_1,\pi_2)$ be a pair of Arthur-type representations in GGP position, such that $\pi_1\in \irr(G_n)$ and $\pi_2\in \irr(G_{n-1})$.

Let $\{V_i\}$ be the filtration of $\pi_1|_{P_n}$ as described in Proposition \ref{BZfilt}.

Then,
\[
 \Hom_{G_{n-1}}(V_i/V_{i-1}, \pi_2)\neq \{0\}\;,
\]
for some $i$.
\end{proposition}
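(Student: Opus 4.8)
The plan is to transport the question, via Proposition \ref{filt-frob}, to the level of Bernstein-Zelevinski derivatives, and then to build an explicit non-zero morphism directly out of the combinatorial data of the GGP position. By Proposition \ref{filt-frob} the space $\Hom_{G_{n-1}}(V_i/V_{i-1},\pi_2)$ is naturally identified with $\Hom_{G_{n-i-1}}\bigl(\nu^{1/2}\pi_1^{(i+1)},\,{}^{(i)}\pi_2\bigr)$, so it suffices to exhibit one index $i$ for which the latter does not vanish. Write
\[
\pi_1 = \pi^{a_1,b_1}_{\rho_1}\times\cdots\times\pi^{a_k,b_k}_{\rho_k},\qquad \pi_2 = \pi^{a'_1,b'_1}_{\rho'_1}\times\cdots\times\pi^{a'_l,b'_l}_{\rho'_l},
\]
and let $\{1,\dots,k\}=I_1\cup I_2\cup I_3$, $\{1,\dots,l\}=J_1\cup J_2\cup J_3$, $u\colon I_1\to J_2$, $d\colon I_2\to J_1$ be the data witnessing the GGP position. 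By Proposition \ref{GK} we may prescribe the orders of the factors of $\pi_1$ and of $\pi_2$; we fix them so that in $\pi_1$ the factors indexed by $I_1$ precede those indexed by $I_2\cup I_3$, and in $\pi_2$ the factors indexed by $J_2\cup J_3$ precede those indexed by $J_1$, with the $J_2$-factors ordered through $u$ to match the chosen order of the $I_1$-factors, and the $J_1$-factors ordered through $d$ to match that of the $I_2$-factors.

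The heart of the argument is that one should differentiate precisely the factors indexed by $I_2\cup I_3$ on the $\pi_1$-side, and precisely those indexed by $J_2\cup J_3$ on the $\pi_2$-side. Let $r$ be the total of the degrees in which the highest non-zero derivatives of $\pi^{a_i,b_i}_{\rho_i}$, $i\in I_2\cup I_3$, occur, and let $q$ be the analogous total for the highest non-zero \emph{left} derivatives of $\pi^{a'_j,b'_j}_{\rho'_j}$, $j\in J_2\cup J_3$. By Proposition \ref{leibn}(2), Proposition \ref{deriv-speh}, and the identity $\pi_\rho^{a,b,0}=\pi_\rho^{a,b-1}\nu^{-1/2}$ (with the convention $\pi_\rho^{a,0}=L(0)$, the trivial representation of $G_0$), the derivative $\pi_1^{(r)}$ surjects onto
\[
Q := \Bigl(\prod_{i\in I_1}\pi^{a_i,b_i}_{\rho_i}\Bigr)\times\Bigl(\prod_{i\in I_2}\nu^{-1/2}\pi^{a_i,b_i-1}_{\rho_i}\Bigr),
\]
where the $I_3$-factors contribute $L(0)$ (since $b_i=1$ there) and drop out of the product.

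Dually, by the left-derivative analogue of Proposition \ref{leibn}(2) (obtained from it by passing to contragredients) together with the relations $\rho'_{u(i)}\cong\rho_i$, $(a'_{u(i)},b'_{u(i)})=(a_i,b_i+1)$, and $\rho'_{d(i)}\cong\rho_i$, $(a'_{d(i)},b'_{d(i)})=(a_i,b_i-1)$, the left derivative ${}^{(q)}\pi_2$ contains as a subrepresentation
\[
S := \Bigl(\prod_{i\in I_1}\nu^{1/2}\pi^{a_i,b_i}_{\rho_i}\Bigr)\times\Bigl(\prod_{i\in I_2}\pi^{a_i,b_i-1}_{\rho_i}\Bigr),
\]
because the highest left derivative of $\pi^{a'_{u(i)},b'_{u(i)}}_{\rho'_{u(i)}}=\pi^{a_i,b_i+1}_{\rho_i}$ equals $\nu^{1/2}\pi^{a_i,b_i}_{\rho_i}$, while the $J_3$-factors again contribute $L(0)$ and drop out. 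With the orderings fixed above, $\nu^{1/2}Q$ and $S$ are literally the same product, hence isomorphic. (Alternatively, one may avoid prescribing the orderings and instead observe, using Lemma \ref{LMlem}, the unitarity of Speh representations and Proposition \ref{GK}, that $\nu^{1/2}Q$ is independent of the order of its factors.)

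Finally, since $Q\neq0$ is a quotient of $\pi_1^{(r)}\in\mathfrak{R}(G_{n-r})$ and $S\neq0$ is a subrepresentation of ${}^{(q)}\pi_2\in\mathfrak{R}(G_{n-1-q})$, the equality $\nu^{1/2}Q=S$ forces $n-r=n-1-q$, that is $r=q+1$. Setting $i:=q$, the composite
\[
\nu^{1/2}\pi_1^{(i+1)}\twoheadrightarrow \nu^{1/2}Q = S\hookrightarrow {}^{(i)}\pi_2
\]
is a non-zero element of $\Hom_{G_{n-i-1}}\bigl(\nu^{1/2}\pi_1^{(i+1)},\,{}^{(i)}\pi_2\bigr)$, and Proposition \ref{filt-frob} identifies this space with $\Hom_{G_{n-1}}(V_i/V_{i-1},\pi_2)$, which is therefore non-zero. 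The one genuinely non-routine step is guessing the correct pattern of derivatives, together with the observation that taking the top left derivative of a Speh representation whose $b$-parameter has been raised by $1$ recovers the original Speh representation twisted by $\nu^{1/2}$ — this is exactly what makes $\nu^{1/2}Q$ meet $S$. Everything else (matching of the supercuspidal lines, tracking of the $\nu^{\pm1/2}$-twists and of the degrees, and the freedom to reorder factors via Proposition \ref{GK}) is routine bookkeeping; note that, unlike in the proof of Theorem \ref{thm-deri}, no reduction to the proper Arthur-type case is needed here, since the matching of supercuspidal supports is already built into the GGP relations.
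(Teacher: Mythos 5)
Your argument is correct and follows essentially the same route as the paper's own proof: reduce via Proposition \ref{filt-frob}, take the highest derivatives of exactly the $I_2\cup I_3$ factors of $\pi_1$ and the highest left derivatives of the $J_2\cup J_3$ factors of $\pi_2$ (your $Q$ and $S$ are the paper's $\Pi_1$ and $\Pi_2^\vee$), identify $\nu^{1/2}Q\cong S$ using Lemma \ref{LMlem} to commute the $\nu^{\pm 1/2}$-twisted Speh factors, and conclude $r=q+1$ by comparing the underlying groups. The only point to keep an eye on is that the contragredient reverses the order of factors, so the ``literally the same product'' claim needs the reordering justification anyway — but your fallback via Lemma \ref{LMlem} is precisely what the paper does.
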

\begin{proof}
  From the assumption on the GGP position, we are able to write the Arthur-type representations as products of Speh representations of a certain form. Namely,
  \[
  \pi_1 = \pi^{a_1,b_1}_{\rho_1}\times \cdots \times  \pi^{a_t,b_t}_{\rho_t}\times \pi^{d_1,1}_{\varrho_1}\times\cdots\times  \pi^{d_s,1}_{\varrho_s}\times \pi^{a'_1,b'_1+1}_{\rho'_1}\times \cdots \times  \pi^{a'_{t'},b'_{t'}+1}_{\rho'_{t'}}\;,
  \]
  \[
  \pi_2 = \pi^{a'_1,b'_1}_{\rho'_1}\times \cdots \times  \pi^{a'_{t'},b'_{t'}}_{\rho'_{t'}}\times \pi^{d'_1,1}_{\varrho'_1}\times\cdots\times  \pi^{d'_{s'},1}_{\varrho'_{s'}}\times \pi^{a_1,b_1+1}_{\rho_1}\times \cdots \times  \pi^{a_{t},b_{t}+1}_{\rho_{t}}\;,
  \]
for some $\{\rho_i\}, \{\rho'_i\}, \{\varrho_i\}, \{\varrho'_i\}\subset \mathcal{C}\,\cap\,\irr^u$ and positive integers $\{a_i\},\{b_i\},\{a'_i\}, \{b'_i\},\{d_i\},\{d'_i\}$.

Now, let us also define the representations
\[
\Pi_1 =  \pi^{a_1,b_1}_{\rho_1}\times \cdots \times  \pi^{a_t,b_t}_{\rho_t} \times \nu^{-1/2} \pi^{a'_1,b'_1}_{\rho'_1}\times \cdots \times  \nu^{-1/2}\pi^{a'_{t'},b'_{t'}}_{\rho'_{t'}} \;,
\]
\[
\Pi_2 = \pi^{a'_1,b'_1}_{\left(\rho'_1\right)^\vee}\times \cdots \times  \pi^{a'_{t'},b'_{t'}}_{\left(\rho'_{t'}\right)^\vee} \times  \nu^{-1/2}\pi^{a_1,b_1}_{\rho^\vee_1}\times \cdots \times  \nu^{-1/2}\pi^{a_t,b_t}_{\rho^\vee_t}\;.
\]
By Propositions \ref{leibn} and \ref{deriv-speh}, $\Pi_1$ appears as a quotient of $\pi_1^{(i)}$, while $\Pi_2$ appears as a quotient of $\left(\pi^\vee_2\right)^{(j)}$, for a certain choice of $i,j$.

Note, that by successive application of Lemma \ref{LMlem}, we see that
\[
\Pi_2 \cong  \nu^{-1/2}\pi^{a_1,b_1}_{\rho^\vee_1}\times \cdots \times  \nu^{-1/2}\pi^{a_t,b_t}_{\rho^\vee_t} \times \pi^{a'_1,b'_1}_{\left(\rho'_1\right)^\vee}\times \cdots \times  \pi^{a'_{t'},b'_{t'}}_{\left(\rho'_{t'}\right)^\vee}\;,
\]
which, in particular, implies the isomorphism $\nu^{1/2}\Pi_1\cong \Pi^\vee_2$. Hence, $\Hom (\nu^{1/2}\pi^{(i)}_1, \,^{(j)}\pi_2)\neq\{0\}$.

Note also, that since $\Pi_1,\Pi_2$ turn out to be representations of the same group, we can deduce $j=i-1$ from the assumption that $\pi_1\in \irr(G_n)$ and $\pi_2\in \irr(G_{n-1})$. Finally, the statement follows from Proposition \ref{filt-frob}.

\end{proof}

Let $\pi_1\in \irr(G_n)$ and $\pi_2\in \irr(G_{n-1})$ be given. Let $\{V_i\}$ be the filtration of $\pi_1|_{P_n}$ as described in Proposition \ref{BZfilt}.

We would like to make the straightforward observation, that in order to show the non-vanishing of  $\Hom_{G_{n-1}} (\pi_1|_{G_{n-1}},\pi_2)$, it is enough to find a number $0\leq i <n$, for which the conditions
\begin{equation}\label{eqfilt}
  \left\{\begin{array}{ll} \Hom_{G_{n-1}}(V_i/V_{i-1}, \pi_2)\neq \{0\} \\ \Ext^1_{G_{n-1}}(V_j/V_{j-1}, \pi_2)= \{0\}\,,\; & \forall\, 0\leq j <i  \end{array} \right.\;
\end{equation}
hold.

By Proposition \ref{filt-frob} and Corollary \ref{filt-frob-cor}, the set of conditions \eqref{eqfilt} is equivalent to

\begin{equation}\label{eqfilt2}
    \left\{\begin{array}{ll} \Hom_{G_{n-1}}(\nu^{1/2}\pi_1^{(i+1)}, ^{(i)}\pi_2)\neq \{0\} \\ \Ext^1_{G_{n-1}}(\nu^{1/2}\pi_1^{(j+1)}, ^{(j)}\pi_2)= \{0\}\,,\; & \forall\, 0\leq j <i  \end{array} \right.\;.
\end{equation}

Recall that an Arthur-type representation
\[
\pi = L\left( \gotM^{a_1,b_1}_{\rho_1} + \ldots + \gotM^{a_k,b_k}_{\rho_k}\right)
\]
is generic, when $b_i=1$, for all $i=1,\ldots,k$.

%Let us call $\pi$ as above \textit{near-generic}, if $b_i\leq 2$, for all $i=1,\ldots,k$.

\begin{theorem}\label{thm-conv}
  Let $(\pi_1,\pi_2)$ be a pair of Arthur-type representations in GGP position, such that $\pi_1\in \irr(G_n)$ and $\pi_2\in \irr(G_{n-1})$ and at least one of $\pi_1,\pi_2$ is generic.
Then,
\[
\Hom_{G_{n-1}} (\pi_1|_{G_{n-1}},\pi_2)\neq \{0\}\;.
\]
%In other words, Conjecture  holds for pairs of near-generic representations.
\end{theorem}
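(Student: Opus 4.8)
The plan is to verify the sufficient condition \eqref{eqfilt2}, that is, to produce an index $i$ with $\Hom(\nu^{1/2}\pi_1^{(i+1)},\,{}^{(i)}\pi_2)\neq\{0\}$ and $\Ext^1(\nu^{1/2}\pi_1^{(j+1)},\,{}^{(j)}\pi_2)=\{0\}$ for all $0\le j<i$. Proposition \ref{indeed-quo} already supplies an index realizing the non-vanishing, so I would take $i$ to be the smallest such one: then the first condition holds, the vanishing of $\Hom$ for $j<i$ is automatic by minimality, and the entire remaining task is the $\Ext^1$-vanishing for $j<i$. The first move is to localize at a single supercuspidal line. Writing $\pi_1\cong\prod_\rho\pi_{1,\rho}$ and $\pi_2\cong\prod_\rho\pi_{2,\rho}$ as products of their proper Arthur-type factors over the distinct inertial classes, the Leibniz rule (Proposition \ref{leibn}) presents $\pi_1^{(m)}$ and ${}^{(m)}\pi_2$ as iterated extensions of products $\prod_\rho(\pi_{1,\rho})^{(m_\rho)}$ with $\sum_\rho m_\rho=m$; since representations supported on different lines lie in different Bernstein blocks, graded pieces attached to different distributions $(m_\rho)$ share no block, so these filtrations split as direct sums. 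Combined with Corollary \ref{disjsupp-cor} and its $\Ext^1$-analogue — valid because parabolic induction is exact with exact adjoints, exactly as in Corollary \ref{filt-frob-cor} — this decomposes both the $\Hom$ and the $\Ext^1$ spaces in a K\"unneth fashion into sums of tensor products, over the lines, of the corresponding spaces between $\nu^{1/2}(\pi_{1,\rho})^{(k)}$ and ${}^{(l)}(\pi_{2,\rho})$, with $k,l$ subject to $\sum_\rho k_\rho=j+1$ and $\sum_\rho l_\rho=j$.

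Next I would reduce the $\Ext^1$-vanishing to a combinatorial disjointness of supercuspidal supports: since the Bernstein center acts on $\Ext^1(A,B)$ through the supports of both $A$ and $B$, a nonzero $\Ext^1(A,B)$ forces $\supp(A)=\supp(B)$, so it is enough to show that for $j<i$ no irreducible constituent of $\nu^{1/2}\pi_1^{(j+1)}$ has the same supercuspidal support as a constituent of ${}^{(j)}\pi_2$, and by the previous paragraph this is a statement on each line. Here the genericity hypothesis enters. Assuming without loss of generality that $\pi_1$ is generic, on each line $\pi_{1,\rho}$ is a product of twisted discrete series $\pi^{a,1}_\rho$; the GGP-position hypothesis then shows that on that line $\pi_{2,\rho}$ is a product of the partner representations $\pi^{a,2}_\rho$ together with further twisted discrete series (the unpaired $J_3$-factors of $\pi_2$ on the line), while $\pi_{1,\rho}$ carries in addition its own unpaired $I_3$-discrete series. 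Proposition \ref{deriv-speh}, together with the specialization of its formula computing the derivatives of $\pi^{a,2}_\rho$, then exhibits the supercuspidal support of every constituent of $\nu^{1/2}(\pi_{1,\rho})^{(k)}$ and of ${}^{(l)}(\pi_{2,\rho})$ explicitly as a multisegment.

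The combinatorial core — and the step I expect to be the main obstacle — is to check that these two families of supports remain disjoint below one distinguished level. The supports occurring in $\nu^{1/2}(\pi_{1,\rho})^{(k)}$ are disjoint unions of \emph{upper} sub-segments of the shifted ranges $\nu^{1/2}\supp(\pi^{a,1}_\rho)$ (differentiation of $\pi^{a,1}_\rho$ only erodes the bottom of its segment), whereas every constituent of ${}^{(l)}(\pi_{2,\rho})$ retains the \emph{entire} range $\nu^{1/2}\supp(\pi^{a,1}_\rho)$ contributed as the top row of each paired factor $\pi^{a,2}_\rho$ (the left derivative can only erode the complementary lower row of $\pi^{a,2}_\rho$). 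For the two supports to agree on every line at once, the $\pi_1$-side is forced to differentiate each paired factor trivially and to absorb its remaining derivative order — which is one unit larger on the $\pi_1$-side than on the $\pi_2$-side — entirely in top derivatives of the unpaired factors on both sides, which is precisely the configuration realized at the level $i_0$ produced by Proposition \ref{indeed-quo}. Consequently there is no support coincidence at any level $j<i_0$, so $i=i_0$ and $\Ext^1(\nu^{1/2}\pi_1^{(j+1)},\,{}^{(j)}\pi_2)=\{0\}$ for all $j<i$, establishing \eqref{eqfilt2} and hence the theorem; the case where $\pi_2$ is generic is handled identically, with the roles of the functors $\pi\mapsto\pi^{(\bullet)}$ and $\pi\mapsto{}^{(\bullet)}\pi$ (and of $\nu^{1/2}$ with $\nu^{-1/2}$) interchanged. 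The delicate point throughout is this bookkeeping of exactly which multisegments arise as supports of the Bernstein-Zelevinski derivatives of the products above: genericity is what keeps the top rows $\nu^{1/2}\supp(\pi^{a,1}_\rho)$ rigid on both sides and so rules out any premature coincidence.
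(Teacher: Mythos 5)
Your overall architecture is the same as the paper's: take the level supplied by Proposition \ref{indeed-quo} for the $\Hom$-nonvanishing, and reduce \eqref{eqfilt2} to showing that at every lower level the two sides cannot support a nontrivial extension because they are separated by the Bernstein center. The gap is in the decisive step, which you yourself flag as ``the main obstacle'' but do not resolve. Your justification of the support-disjointness for $j<i_0$ rests on the rigidity of the top rows of the paired factors $\pi^{a,2}_\rho$, but that observation alone does not rule out a coincidence of supports: the lower portion of a top row that is eroded on the $\pi_1$-side can in principle be resupplied by a different Speh factor lying on the \emph{same} cuspidal line (an unpaired $I_3$-factor, or another paired factor with overlapping support), and your sketch never confronts this compensation between factors. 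The conclusion ``the $\pi_1$-side is forced to differentiate each paired factor trivially'' is exactly what has to be proved, and the multiset bookkeeping needed to prove it (with several factors per line) is not carried out. As written, the argument is therefore incomplete precisely at the point where the theorem is established. (A smaller remark: the $\Ext^1$-K\"unneth decomposition across disjoint lines that you invoke is an unproven auxiliary claim, but it is also unnecessary for your route, since equality of supercuspidal supports of products already decomposes line by line.)

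The paper avoids the multiset comparison altogether by using a coarser, linear invariant. A nonzero $\Ext^1(\nu^{1/2}\sigma_1,\sigma_2^\vee)$ forces equality of central characters, hence $\Re(\nu^{1/2}\sigma_2)=-\Re(\sigma_1)$. Genericity of $\pi_1$ makes every factor of $\sigma_1$ a quasi-Speh representation $\pi^{a,1,c}_\rho$ with $\Re\geq 0$, so $\Re(\nu^{1/2}\sigma_2)\leq 0$; on the other hand each factor $\nu^{1/2}\pi^{a,2,f}_{\rho^\vee}$ and $\nu^{1/2}\pi^{d',1,e'}_{(\varrho')^\vee}$ of $\nu^{1/2}\sigma_2$ has $\Re\geq 0$, strictly unless $f=0$, respectively $e'=0$. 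The two inequalities force $\Re(\nu^{1/2}\sigma_2)=0$ with all $f_i=e'_i=0$, i.e.\ $\sigma_2$ occurs only in the highest nonzero derivative of $\pi_2^\vee$, contradicting $j<i_0$ together with $^{(i_0)}\pi_2\neq\{0\}$. This turns the obstruction into a sum of quantities of known sign and is also where genericity is genuinely used (to guarantee $\Re(\sigma_1)\geq0$). To complete your write-up, either carry out the full support comparison including the shared-line case, or replace that step by this central-character computation.
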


\begin{proof}
Let us assume for simplicity that $\pi_1$ is generic.
The case of generic $\pi_2$ is proved by same arguments, while exchanging the roles of $\pi_1,\pi_2$.

From the assumptions we are able to write (in similarity with the proof of Proposition \ref{indeed-quo})
 \[
  \pi_1 = \pi^{a_1,1}_{\rho_1}\times \cdots \times  \pi^{a_t,1}_{\rho_t}\times \pi^{d_1,1}_{\varrho_1}\times\cdots\times  \pi^{d_s,1}_{\varrho_s}\;,
  %\times \pi^{a'_1,2}_{\rho'_1}\times \cdots \times  \pi^{a'_{t'},2}_{\rho'_{t'}}\;,
  \]
  \[
 % pi_2 = \pi^{a'_1,1}_{\rho'_1}\times \cdots \times  \pi^{a'_{t'},1}_{\rho'_{t'}}\times
 \pi_2=\pi^{d'_1,1}_{\varrho'_1}\times\cdots\times  \pi^{d'_{s'},1}_{\varrho'_{s'}}\times \pi^{a_1,2}_{\rho_1}\times \cdots \times  \pi^{a_{t},2}_{\rho_{t}}\;,
  \]
for some $\{\rho_i\}, \{\varrho_i\}, \{\varrho'_i\}\subset \mathcal{C}\,\cap\,\irr^u$ and positive integers $\{a_i\},\{d_i\},\{d'_i\}$.

From Proposition \ref{indeed-quo} we know that $\Hom_{G_{n-1}}(V_{i_0}/V_{i_0-1}, \pi_2)\neq \{0\}$ holds, for some $i_0$. Hence, by the discussion above, it is enough to show that $\Ext^1_{G_{n-1}}(\nu^{1/2}\pi_1^{(i+1)}, ^{(i)}\pi_2)$ vanishes, for all $0\leq i <i_0$.

Let us assume the contrary, that is, $\Ext^1_{G_{n-1}}(\nu^{1/2}\pi_1^{(i_1+1)}, ^{(i_1)}\pi_2)\neq \{0\}$ for a given $0\leq i_1 <i_0$. By Propositions \ref{leibn} and \ref{deriv-speh}, that means we have
\[
\Ext^1_{G_{n-1}}(\nu^{1/2}\sigma_1, \sigma^\vee_2)\neq\{0\}\;,
\]
for some representations
 \[
  \sigma_1 = \pi^{a_1,1,c_1}_{\rho_1}\times \cdots \times  \pi^{a_t,1,c_t}_{\rho_t}\times \pi^{d_1,1,e_1}_{\varrho_1}\times\cdots\times  \pi^{d_s,1,e_s}_{\varrho_s}\;,
  \]
  \[
  \sigma_2 =  \pi^{d'_1,1,e'_1}_{(\varrho'_1)^\vee}\times\cdots\times  \pi^{d'_{s'},1,e'_{s'}}_{(\varrho'_{s'})^\vee}\times \pi^{a_1,2,f_1}_{\rho^\vee_1}\times \cdots \times  \pi^{a_{t},2,f_t}_{\rho^\vee_{t}}\;.
  \]
Since non-trivial extensions can occur only between representations with the same central character, we see that $\Re(\nu^{1/2}\sigma_1) = \Re(\sigma_2^\vee) = -\Re(\sigma_2)$. As a consequence, the equality
\[
\Re(\nu^{1/2}\sigma_2) = -\Re(\sigma_1) = -\sum_{i=1}^t \Re(\pi^{a_i,1,c_i}_{\rho_i}) - \sum_{i=1}^s \Re(\pi^{d_i,1,e_i}_{\varrho_i})
\]
holds. Yet,
\[
\Re(\pi^{a,1,c}_{\rho}) = \Re\left( \rho\nu^{a/2-c+1/2}\times \rho\nu^{a/2-c+3/2} \times\cdots\times \rho\nu^{a/2-1/2}\right)= k_\rho\sum_{i=1}^c \left(\frac{a+1}2-i\right)\geq 0\;,
\]
for all $\rho\in \mathcal{C}(G_{k_\rho})\,\cap\,\irr^u$ and integers $0\leq c\leq a$. Hence, $\Re(\nu^{1/2}\sigma_2)\leq 0$.

On the other hand, for all $1\leq i\leq t$, we have
\[
\Re\left( \nu^{1/2}\pi^{a_{i},2,f_i}_{\rho^\vee_i}\right) = \Re\left(\rho^\vee_i\nu^{-\frac{a-1}{2}} \times \rho^\vee_i\nu^{-\frac{a-1}{2}+1}\times	 \cdots\times \rho^\vee_i\nu^{\frac{a-1}{2}}\right) + \Re\left(\nu^1\pi^{a_{i},1,f_i}_{\rho^\vee_i}\right)=
\]
\[
=\Re\left(\nu^1\pi^{a_{i},1,f_i}_{\rho^\vee_i}\right)\geq\Re\left(\pi^{a_{i},1,f_i}_{\rho^\vee_i}\right)\geq0\;.
\]
Moreover, it is clear that the above inequality is strict, unless $f_i=0$.
Thus,
\[
\Re(\nu^{1/2}\sigma_2) = \sum_{i=1}^{s'} \Re\left( \nu^{1/2}\pi^{d'_i,1,e'_i}_{(\varrho'_i)^\vee}\right) + \sum_{i=1}^{t} \Re\left( \nu^{1/2}\pi^{a_{i},2,f_i}_{\rho^\vee_{i}}\right) = 0\;,
\]
with $f_i=0$ and $e'_i=0$ for all $i$.

Now, this implies that $\sigma_2$ appears only in the highest non-zero derivative of $\pi_2^\vee$. This is a contradiction to the facts that $i_1<i_0$ and that $^{(i_0)}\pi_2$ is non-zero.

\end{proof}

\section{Quantum affine algebras}\label{sect-qa}
We would like to prove Theorem \ref{transl} by applying results from the representation theory of quantum affine algebras.

\subsection{Setting}

Let us recall parts of the theory of quantum affine algebras and their finite-dimensional representations. We refer the reader to \cite{cha-pres-qa}, \cite[Chapter 12]{cha-pres-book} ,\cite{fren-resh} for comprehensive study and discussions of these objects.

We are interested in the Hopf $\mathbb{C}$-algebra $\mathcal{A}_{N,q}=U_q\left(\hat{\frak{sl}}_N\right)$, which is defined for a fixed parameter $q\in \mathbb{C}^\times$. Its definition involves generators and relations which depend on the Cartan matrix of the affine Lie algbera $\hat{\frak{sl}}_N$. We will refrain from stating the full definition, which can be easily found in the sources mentioned above, since our applications will not require it. Our assumption for the rest of this section will be that $q$ \textit{is not a root of unity}.

Recall (\cite[Proposition 2.3]{cha-pres-qa}) that, as vector spaces, we have a triangular decomposition
\[
\mathcal{A}_{N,q} = U^- \otimes U^0 \otimes U^+\;,
\]
where $U^-,U^0, U^+$ are sub-algebras of $\mathcal{A}_{N,q}$.

Also, recall that the definition of $\mathcal{A}_{N,q}$ involves the invertible elements
\[
k_i\in U^0,\,i=1,\ldots,N-1\;,
\]
which generate a commutative co-commutative Hopf-sub-algebra $K<\mathcal{A}_{N,q}$. This sub-algebra can be seen as the Cartan algebra (zero-part) in the triangular decomposition of the smaller quantum group $U_q(\mathfrak{sl}_N)$, which is naturally embedded in $\mathcal{A}_{N,q}$.

Let $\Omega$ be the set of complex characters $\lambda$ of $K$, which satisfy $\lambda(k_i)\in q^{\mathbb{Z}}$, for all $i=1,\ldots,N-1$. Since $K$ is co-commutative, $\Omega$ is an abelian group. Given a complex character $P$ of the algebra $U^0$, we will write $\omega(P)$ to be its restriction to $K$.

We are further interested in the category $\mathcal{C}^q_N$ of \textit{type $1$} representations of $\mathcal{A}_{N,q}$. It consists of all finite-dimensional representations $V$ in which a certain specified central element $c^{1/2}\in U^0$ acts trivially, and which comply with a weight space decomposition for the sub-algebra $K$, of the form
\[
V= \oplus_{\lambda \in \Omega} V_\lambda\;,
\]
where $V_\lambda$ is the $\lambda$-eigenspace of $V$.

\subsection{Cyclic modules}

For a representation $V$ in $\mathcal{C}^q_N$, we say that a vector $v\in V$ is \textit{highest weight}, if $v$ is an eigenvector for the algebra $U^0$ and $U^+\cdot v = \mathbb{C}\cdot v$. The character of $U^0$ by which it acts on a highest weight vector $v$ is called the $\ell$-weight of $v$ (not to be confused with the previous weight decomposition by characters of the smaller algebra $K$).

Given an irreducible representation $V$ in $\mathcal{C}^q_N$, it is known there is a unique, up to scalar, highest weight vector $v\in V$. The $\ell$-weight $P$ of $v$ characterizes the isomorphism class of $V$. We can write in this case $V = V(P)$.

Let us write $\mathcal{D}_N$ for the set of characters of $U^0$ which give rise to irreducible representations $V(P)$, $P\in \mathcal{D}_N$. The set $\mathcal{D}_N$ can be described using what is known as \textit{Drinfeld polynomials}.

The set $\mathcal{D}_N$ also comes with a natural monoid structure, in the following sense. Given highest weight vectors $v_P\in V,\; v_Q\in W$ of respective $\ell$-weights $P,Q\in \mathcal{D}_N$, the vector $v_P\otimes v_Q\in V\otimes W$ is also a highest weight vector of $\ell$-weight $P\cdot Q\in \mathcal{D}_N$.

Let $V_1,\ldots, V_k$ be irreducible representations in $\mathcal{C}_N$. Let $v_i\in V_i$, $1\leq i\leq k$ be the corresponding highest weight vectors. Let $W\subseteq V_1\otimes \cdots \otimes V_k$ be the subrepresentation generated by the vector $v_1\otimes \cdots\otimes v_k$.

Inspired by the terminology of \cite{hernan-cyc}, we say that the tuple $(V_1,\ldots, V_k)$ is \textit{cyclic} if $W= V_1\otimes \cdots \otimes V_k$.

We would like to reformulate the notion of cyclic tuples into a more categorical language.

\begin{proposition}\label{cyclic}

Let
\[
\underline{V} = (V(P_1),\ldots, V(P_k))
\]
be a tuple of irreducible representations in $\mathcal{C}^q_N$, with $P_1,\ldots,P_k\in \mathcal{D}_N$.

The tuple $\underline{V}$ is cyclic, if and only if, the representation
\[
V(P_1)\otimes \cdots\otimes V(P_k)
\]
has a unique irreducible quotient whose isomorphism class is given by $V(P_1\cdot\ldots\cdot P_k)$.

\end{proposition}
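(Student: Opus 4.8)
The plan is to reduce the statement to two standard facts about the category $\mathcal{C}^q_N$ and then to combine them by a short argument with composition factors. Write $P = P_1\cdots P_k\in\mathcal{D}_N$, set $T = V(P_1)\otimes\cdots\otimes V(P_k)$, let $v = v_1\otimes\cdots\otimes v_k\in T$ be the tensor of the highest weight vectors (a nonzero highest weight vector of $\ell$-weight $P$), and let $W\subseteq T$ be the subrepresentation it generates; thus $\underline{V}$ is cyclic precisely when $W = T$. The two facts are: \emph{(a)} any object of $\mathcal{C}^q_N$ generated by a single highest weight vector of $\ell$-weight $P$ has a unique irreducible quotient, isomorphic to $V(P)$; and \emph{(b)} $V(P)$ occurs with multiplicity exactly one as a composition factor of $T$.

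For \emph{(a)} I would run the usual highest weight argument: the triangular decomposition $\mathcal{A}_{N,q} = U^-\otimes U^0\otimes U^+$ together with $U^0\cdot v = \mathbb{C} v$ and $U^+\cdot v = \mathbb{C} v$ shows such a module equals $U^-\cdot v$, and since $U^-$ strictly lowers the $K$-weight, its $\omega(P)$-weight space is exactly $\mathbb{C} v$. Hence every proper subrepresentation misses that weight space, so the sum of all proper subrepresentations is again proper and is the unique maximal one; the resulting quotient is irreducible and generated by a highest weight vector of $\ell$-weight $P$, hence is $V(P)$. For \emph{(b)} I would note that the maximal $K$-weight appearing in $T$ is $\omega(P) = \omega(P_1)+\cdots+\omega(P_k)$, that $T_{\omega(P)}$ is the tensor product of the top weight spaces of the $V(P_i)$, and that each of these is one-dimensional (an irreducible $V(P_i)$ equals $U^-\cdot v_i$); therefore $\dim T_{\omega(P)} = 1$. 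A composition factor of $T$ can contribute to $T_{\omega(P)}$ only via its own top weight space and only if its top weight is $\omega(P)$, so a dimension count along a composition series shows exactly one composition factor of $T$ has top weight $\omega(P)$ and it appears once; as $V(P)$ is a subquotient of $T$ (through the surjection $W\twoheadrightarrow V(P)$ from \emph{(a)}) with top weight $\omega(P)$, it is that factor. Alternatively \emph{(b)} can be quoted directly from the literature on $\ell$-weights of tensor products of simple modules.

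Granting \emph{(a)} and \emph{(b)}, the equivalence is formal. If $\underline{V}$ is cyclic then $W = T$, so by \emph{(a)} the module $T$ has a unique irreducible quotient, namely $V(P) = V(P_1\cdots P_k)$. Conversely, assume $T$ has a unique irreducible quotient, isomorphic to $V(P)$, and suppose for contradiction that $W\subsetneq T$. Then $T/W\neq 0$ has an irreducible quotient $U$, and composing $T\twoheadrightarrow T/W\twoheadrightarrow U$ exhibits $U$ as an irreducible quotient of $T$, whence $U\cong V(P)$ by hypothesis; so $V(P)$ is a composition factor of $T/W$. By \emph{(a)} it is also a composition factor of $W$, so $[T:V(P)] = [W:V(P)] + [T/W:V(P)]\geq 2$, contradicting \emph{(b)}. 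Hence $W = T$, i.e.\ $\underline{V}$ is cyclic. The only step carrying genuine content is \emph{(b)}; the rest is bookkeeping with highest weight vectors, so I expect pinning down the multiplicity-one statement (or the cleanest citation for it) to be the main point.
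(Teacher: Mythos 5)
Your proof is correct and follows essentially the same route as the paper's: both arguments rest on the one-dimensionality of the top $K$-weight space $T_{\omega(P_1\cdots P_k)}$ (your fact \emph{(b)} is exactly the Frenkel--Mukhin input the paper cites) together with the standard observation that a module generated by a highest weight vector has a unique irreducible quotient. The only cosmetic difference is in the converse, where you close via multiplicity one of $V(P_1\cdots P_k)$ in a composition series, while the paper argues directly that $m$, and hence $W$, must project nontrivially onto the unique irreducible quotient; both are immediate consequences of the same weight-space computation.
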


\begin{proof}
For $P\in \mathcal{D}_N$, note that $\omega(P)\in \Omega$ by the assumptions on $\mathcal{C}^q_N$.

Recall (\cite[Theorem 1.3(3)]{fren-mukh}) that $\dim V(P_i)_{w(P_i)}=1$, and that for any $\lambda\in \Omega$ with $V(P_i)_\lambda\neq \{0\}$, we have $w(P_i)> \lambda$. This inequality is meant in a sense of the partial order, which comes from the natural identification of $\Omega$ with the wight lattice of the simple Lie algebra $\mathfrak{sl}_N$.

It easily follows that the representation $ M:=V(P_1)\otimes\cdots \otimes V(P_k)$ has $\dim M_{\omega(P_1)+\ldots +\omega(P_k)}=1$.

Let $v_i\in V(P_i)$, $i=1,\ldots, k$ be the corresponding highest weight vectors, with $\ell$-weights $P_i$. Then, $m:=v_1\otimes\cdots\otimes v_k\in M_{\omega(P_1)+\ldots +\omega(P_k)}$. In particular, $w(P_1\cdot\ldots\cdot P_k) = w(P_1)+\ldots+w(P_k)$.

Now, let $W\subset M$ be the subrepresentation generated by the vector $m$. By standard arguments on weight decompositions, the one-dimensionality of $W_{\omega(P_1 \cdot\ldots\cdot P_k)}=M_{\omega(P_1 \cdot\ldots\cdot P_k)}$ implies that $W$ has a unique irreducible quotient $Z$ to which $m$ has a non-zero projection. In particular, the projection of $m$ in $Z$ is a highest weight vector of $\ell$-weight $P_1\cdot\ldots\cdot P_k$, which means $Z\cong V(P_1\cdot\ldots\cdot P_k)$. One direction of the proposition readily follows.

Conversely, suppose that $M$ has an irreducible quotient $Y\cong V(P_1\cdot\ldots\cdot P_k)$. Again, the one-dimensionality of $M_{\omega(P_1 \cdot\ldots\cdot P_k)}$ requires $m$, and hence $W$, to project non-trivially on $Y$. Since $Y$ is irreducible, we must have $W=M$.

\end{proof}

\begin{theorem}[Hernandez\cite{hernan-cyc}]\label{hern-thm}
Let $V_1,\ldots, V_k$ be irreducible representations in $\mathcal{C}^q_N$, such that for all $1\leq i<j\leq k$, the pair $(V_i,V_j)$ is cyclic. Then, the tuple $(V_1,\ldots, V_k)$ is cyclic.
\end{theorem}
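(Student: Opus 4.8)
The plan is to translate the statement into a question about simple quotients of tensor products of $\mathcal{A}_{N,q}$-modules and then to run a normalized $R$-matrix argument. By Proposition \ref{cyclic} the hypothesis is that each $V(P_i)\otimes V(P_j)$ with $i<j$ has $V(P_i\cdot P_j)$ as its unique irreducible quotient, and what must be shown is that $M:=V(P_1)\otimes\cdots\otimes V(P_k)$ has $V(P_1\cdot\ldots\cdot P_k)$ as its unique irreducible quotient, equivalently that the submodule $W\subseteq M$ generated by $m:=v_{P_1}\otimes\cdots\otimes v_{P_k}$ is all of $M$. First I would put spectral parameters on the factors: for a twisting parameter $z_i$ let $V(P_i)_{z_i}$ denote the corresponding shifted module, and set $\mathbb{M}:=V(P_1)_{z_1}\otimes\cdots\otimes V(P_k)_{z_k}$, a module over $\mathcal{A}_{N,q}\otimes R$ with $R=\mathbb{C}[z_1^{\pm1},\ldots,z_k^{\pm1}]$. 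Over the fraction field of $R$ the tensor product is irreducible --- a generic tensor product of simple objects of $\mathcal{C}^q_N$ is simple (see \cite{cha-pres-book}) --- so there it is generated by its highest $\ell$-weight vector; the point is to preserve generation along the specialization $z_1=\cdots=z_k=1$ recovering $M$.

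The transport device is the normalized $R$-matrix (see \cite{fren-resh,cha-pres-book}). For each pair $(i,j)$ there is an intertwiner $\check{R}_{i,j}$ depending rationally on $z_i/z_j$, normalized so that it carries highest weight vector to highest weight vector with coefficient $1$, and having a finite, explicitly bounded set of poles in $z_i/z_j$. The pairwise input --- here one invokes results of Chari and of Kashiwara on level-zero modules --- is a \emph{cyclicity criterion}: $V(P_i)\otimes V(P_j)$ is generated by $v_{P_i}\otimes v_{P_j}$ exactly when the relevant $\check{R}_{i,j}$ is regular at $z_i=z_j=1$; thus our hypothesis says precisely that all $\check{R}_{i,j}$, $i<j$, are regular there. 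Next I would assemble the composite intertwiner $\mathbf{R}_{w_0}\colon\mathbb{M}\to\mathbb{M}^{\mathrm{rev}}$, where $\mathbb{M}^{\mathrm{rev}}:=V(P_k)_{z_k}\otimes\cdots\otimes V(P_1)_{z_1}$, as a product of the $\binom{k}{2}$ maps $\check{R}_{i,j}$, one for each inversion of the longest element $w_0\in S_k$, in the order of any reduced word; the Yang--Baxter equations make this independent of the reduced word. One then has to show --- and \textbf{this is the step I expect to be the main obstacle} --- that because $w_0$ is written as a reduced word, the pole divisor of $\mathbf{R}_{w_0}$ in the variables $z_i$ lies in the union of the pole divisors of the factors, with no cancellation and no higher-order poles arising. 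Granting this, $\mathbf{R}_{w_0}$ is regular at $z_1=\cdots=z_k=1$ and specializes to a nonzero intertwiner $\Phi\colon M\to M^{\mathrm{rev}}$ with $\Phi(m)=m^{\mathrm{rev}}$.

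To finish I would appeal to the $k$-ary form of the same cyclicity criterion, whose proof is formally identical to the $k=2$ case: $M$ is generated by $m$ if and only if $\mathbf{R}_{w_0}\colon M\to M^{\mathrm{rev}}$ is regular at $z_1=\cdots=z_k=1$. Since the previous paragraph established exactly this regularity, $W=M$, which by Proposition \ref{cyclic} is the asserted cyclicity of $(V(P_1),\ldots,V(P_k))$. A more hands-on alternative for the last step is an induction on $k$: one uses that $\dim M_{\omega(P_1\cdot\ldots\cdot P_k)}=1$ (as in the proof of Proposition \ref{cyclic}, via \cite{fren-mukh}), that $m$ spans this line, and that $\Phi(W)$ is the submodule of $M^{\mathrm{rev}}$ generated by its highest weight vector, together with a comparison of $q$-characters of $W$ and $M$ --- but this reintroduces the very bookkeeping the $R$-matrix argument was meant to bypass, so I would pursue the first route and concentrate the effort on the reduced-word pole analysis.
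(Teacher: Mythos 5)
The paper does not prove this statement at all: it is imported verbatim from Hernandez \cite{hernan-cyc} (the whole point of Section \ref{sect-qa} is to \emph{use} this theorem, via Propositions \ref{cyclic} and \ref{irr-corr}, to deduce Theorem \ref{transl}). So you are attempting to reprove the main theorem of \cite{hernan-cyc} from scratch, and your $R$-matrix sketch has genuine gaps. The first is the pairwise ``cyclicity criterion'' you take as input: the equivalence between cyclicity of $V(P_i)\otimes V(P_j)$ and regularity of the normalized $\check{R}_{i,j}$ at the specialization point is available in the literature (Chari, Kashiwara, Varagnolo--Vasserot, Akasaka--Kashiwara) only for tensor products of \emph{fundamental} (or Kirillov--Reshetikhin) modules, where the denominators are computed explicitly. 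For arbitrary simple objects of $\mathcal{C}^q_N$ this equivalence is not an off-the-shelf fact; establishing a usable link between cyclicity and $R$-matrix poles for general simple modules is a substantial part of why the theorem is hard, and you cannot assume it as a black box.

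Second, you have mislocated the main obstacle. The step you flag --- that the composite $\mathbf{R}_{w_0}$ acquires no poles beyond those of its factors --- is immediate in the direction you need: a composition of matrix-valued rational maps each regular at $z_1=\cdots=z_k=1$ is regular there, and your normalization forces $\mathbf{R}_{w_0}(m)=m^{\mathrm{rev}}\neq 0$. The real gap is the final ``$k$-ary cyclicity criterion,'' whose proof is not ``formally identical to the $k=2$ case.'' To conclude that regularity of $\Phi\colon M\to M^{\mathrm{rev}}$ with $\Phi(m)=m^{\mathrm{rev}}$ forces $W=M$, one needs to know that the submodule of $M^{\mathrm{rev}}$ generated by $m^{\mathrm{rev}}$ detects all of $\Phi(M)$ --- in effect, that $M^{\mathrm{rev}}$ is cocyclic (has simple socle through $m^{\mathrm{rev}}$). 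That is precisely the dual of the statement being proved, applied to the reversed tuple, so the argument as written is circular. Hernandez's actual proof avoids this route: it proceeds by induction on $k$ using $q$-character ($\ell$-weight) arguments together with the known cyclicity theorem for suitably ordered tensor products of fundamental representations, rather than by a pole analysis of a global composite $R$-matrix. If you want to pursue your route, the two cyclicity criteria are exactly the statements you would have to prove, and they carry essentially all of the content of the theorem.
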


\begin{remark}
The motivation for the above theorem slightly differs from its current application in our work. It was proved as part of a systematic study of the non-semi-simple category $\mathcal{C}^q_N$. Since a cyclic representation whose dual is cyclic as well must be irreducible, the theorem implies an irreducibility criterion for tensor products of representations (which, in fact, was proved earlier in \cite{hern-irr}). Thus, this result generalized the criterion and described a factorization of a family of cyclic representations into a product of irreducible ones.
\end{remark}
\begin{remark}
In our considerations for the problem at hand we will apply Theorem \ref{hern-thm} on a family of representations analogous (through a Schur-Weyl duality, see next section) to quasi-Speh representations as in Proposition \ref{main-prop}. It would be interesting to compare this application to the special case of Theorem \ref{hern-thm} that was presented in \cite{chari-braid}.
\end{remark}

\subsection{Quantum affine Schur-Weyl duality}\label{sect-sw-dual}
Let $q\in \mathbb{C}^\times$ be fixed.
For each $k\geq1,\,N\geq2$, Chari-Pressley \cite{cp-duality} have defined a quantum affine Schur-Weyl duality functor
\[
\mathcal{F}_{k, N}: \mathcal{M}^{q^2}_k \to \mathcal{C}^q_N\;.
\]

Recall from Section \ref{sect-hecke} that $\mathcal{M}^{q^2}_k$ stands for the category of finite-dimensional representations of the affine Hecke algebra $H(k,q^2)$.

These functors served as an affine generalization of the work of Jimbo \cite{jimbo} on a quantum Schur-Weyl duality between quantum groups and (finite) Hecke algebras.

Let us recall some of the properties of these duality functors, as were studied in \cite{cp-duality}.

When $k\leq N$, $\mathcal{F}_{k,N}$ is a fully faithful functor. In other words, it serves as an embedding of the category $\mathcal{M}^{q^2}_k$ into a certain full sub-category of $\mathcal{C}^q_N$. In fact, this sub-category can be easily characterized by an intrinsic property of representations in $\mathcal{C}^q_N$ (these are called \textit{level $k$} representations). We will not need this characterization in our discussion.

%In particular, in this case the functor preserves irreducibility of objects.

Moreover, the duality functors are monoidal (\cite[Proposition 4.7]{cp-duality}), in the sense that
\begin{equation}\label{monoi}
\mathcal{F}_{k_1,N}(\pi_1)\otimes \mathcal{F}_{k_2,N}(\pi_2)\cong \mathcal{F}_{k_1+k_2,N}(\pi_1\times \pi_2),\;
\end{equation}

for all representations $\pi_1,\pi_2$ in $\mathcal{M}_{k_1}^{q^2}, \mathcal{M}_{k_2}^{q^2}$, respectively. Recall, that $\pi_1\times \pi_2$ denotes the induction product which produces a representation in $\mathcal{M}_{k_1+k_2}^{q^2}$.

Let us now return to the setting of the first sections and assume that $q^2 = q_F$. Recall the bijection
\[
\hat{Z}:\multi_0 \to \bigcup_{k\geq0} \irr(\mathcal{M}^{q_F}_k)\;,
\]
that is given by the Zelevinski(-Rogawski) classification.

The following proposition implies, in particular, that for all $k\leq N$, the duality functor $\mathcal{F}_{k,N}$ sends irreducible representations to irreducible representations.

This correspondence of irreducible objects from different categories is rather intriguing. For example, the family of irreducible Speh representations is sent by the composed functor $\mathcal{F}_{k,N}\circ U_\Theta$ ($U_\Theta$ as in Section \ref{sect-equiv-heck}) to the family of \textit{Kirillov–Reshetikhin modules} in $\mathcal{C}^q_N$. The latter is an important family of modules for the study of quantum affine algebras, which was initially explored in \cite{kir-r} (see the survey \cite{chari-survey}, for example).

\begin{proposition}\cite[Theorem 7.6]{cp-duality}\label{irr-corr}

For every $N\geq2$, there is a natural embedding of monoids $\iota_N:\mathcal{D}_N\to \multi_0$, for which
\[
\mathcal{F}_{k_P,N}( \hat{Z}(\iota_N(P)))\cong V(P)\in\irr (\mathcal{C}^q_N)\;
\]
holds, for all $P\in \mathcal{D}_N$ with $k_P\leq N$, where $k_P$ is the integer for which $\hat{Z}(\iota_N(P))\in \irr (\mathcal{M}_{k_P}^{q_F})$.

Moreover, for every $1\leq k\leq N$, the collection of multisegments $\hat{Z}^{-1}(\irr(\mathcal{M}_k^{q_F}))$ is contained in the image of $\iota_N$.

\end{proposition}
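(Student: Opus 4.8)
The plan is to deduce the statement from the formal properties of the duality functor recalled above, in the spirit of \cite[\S7]{cp-duality}. The first step is to upgrade the hypotheses ``$\mathcal{F}_{k,N}$ is exact, monoidal, fully faithful for $k\le N$, with essential image the full subcategory of level-$\le k$ representations'' into the assertion that $\mathcal{F}_{k,N}$ \emph{preserves and reflects simplicity}. Indeed, the level-$\le k$ subcategory is closed under subobjects, so any subrepresentation $T$ of an $\mathcal{F}_{k,N}(V)$ is of the form $\mathcal{F}_{k,N}(T')$, and full faithfulness applied to $\Hom(T',V)\cong\Hom(\mathcal{F}_{k,N}(T'),\mathcal{F}_{k,N}(V))$ produces an embedding $T'\hookrightarrow V$; taking $V$ simple forces $T'=V$ or $T'=0$. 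Hence $\mathcal{F}_{k,N}$ restricts to an injection $\irr(\mathcal{M}^{q^2}_k)\hookrightarrow\irr(\mathcal{C}^q_N)$ (injectivity being immediate from faithfulness), and, being exact, carries composition series to composition series and socles into socles.

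The second step is to identify the functor on building blocks. For a segment $\Delta=[a,b]_{\nu^0}$ of length $m=b-a+1\le N$, the module $\hat{Z}(\Delta)$ is one-dimensional, and unwinding the Chari--Pressley bimodule $(\mathbb{C}^N)^{\otimes m}$ pairs this character with the $q$-deformed exterior power $\Lambda^m_q\mathbb{C}^N$ carrying an evaluation action at a spectral parameter that is a prescribed affine function of $a+b$. Thus $\mathcal{F}_{m,N}(\hat{Z}(\Delta))\cong V(P_\Delta)$ for a fundamental Drinfeld datum $P_\Delta\in\mathcal{D}_N$, and one checks that $\Delta\mapsto P_\Delta$ is injective on segments of length $<N$ (the boundary case $m=N$, where $\Lambda^N_q\mathbb{C}^N$ degenerates to the trivial module of $U_q(\widehat{\mathfrak{sl}}_N)$, has to be handled separately). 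Combining this with monoidality \eqref{monoi}, for every $\gotM=\Delta_1+\dots+\Delta_r\in\multi_0$ with $n_\gotM\le N$ one gets $\mathcal{F}_{n_\gotM,N}(\zeta(\gotM))\cong V(P_{\Delta_1})\otimes\cdots\otimes V(P_{\Delta_r})$.

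The third step matches the two parametrizations. Ordering the $\Delta_i$ as in the Zelevinski construction, $\hat{Z}(\gotM)$ is the unique irreducible submodule of $\zeta(\gotM)$; since $\mathcal{F}_{n_\gotM,N}$ is exact and fully faithful with a subobject-closed essential image, $\mathcal{F}_{n_\gotM,N}(\hat{Z}(\gotM))$ is the unique irreducible submodule of $V(P_{\Delta_1})\otimes\cdots\otimes V(P_{\Delta_r})$. The $\ell$-weight bookkeeping already used in the proof of Proposition \ref{cyclic} --- the top $\ell$-weight space of such a tensor product (in the ordering at hand) being one-dimensional with $\ell$-weight $P_{\Delta_1}\cdots P_{\Delta_r}$ --- then identifies this submodule as $V(P_{\Delta_1}\cdots P_{\Delta_r})$; here one reconciles the precedence ordering $\prec$ on segments with the corresponding ordering of spectral parameters, and, if one prefers to argue with cosocles, uses the duality of Proposition \ref{GK}. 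Consequently $\mathcal{F}_{n_\gotM,N}(\hat{Z}(\gotM))\cong V(P_\gotM)$ with $P_\gotM:=\prod_i P_{\Delta_i}$. Because Drinfeld data factor uniquely into fundamental ones and $\Delta\mapsto P_\Delta$ is injective, $\gotM\mapsto P_\gotM$ is an injective monoid homomorphism on multisegments of length $\le N$; declaring $\iota_N\colon\mathcal{D}_N\to\multi_0$ to be its inverse (on the image, which by the second step accounts for all of $\mathcal{D}_N$) yields a monoid embedding with $\mathcal{F}_{k_P,N}(\hat{Z}(\iota_N(P)))\cong V(P)$, which is the first assertion. For the ``moreover'' clause, a multisegment of total length $k\le N$ has each of its segments of length $\le k\le N$ and therefore lies in the domain of $\gotM\mapsto P_\gotM$, i.e. in $\im\iota_N$, so $\hat{Z}^{-1}(\irr(\mathcal{M}^{q_F}_k))\subseteq\im\iota_N$.

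I expect the main obstacle to be the second step: pinning down exactly which fundamental evaluation module $\hat{Z}(\Delta)$ is sent to --- in particular the precise normalization of the spectral parameter in terms of $a,b$ --- and, interlocked with it, verifying that the ``standard module'' picture is transported correctly, so that the unique irreducible submodule of $\zeta(\gotM)$ really goes to the highest-$\ell$-weight constituent of the tensor product (this is where $\prec$ has to be matched against the ordering of quantum spectral parameters, and where the degeneracies of $\widehat{\mathfrak{sl}}_N$ versus $\widehat{\mathfrak{gl}}_N$ must be controlled). These are precisely the computations carried out in \cite[\S7]{cp-duality}; the categorical wrapper above --- exactness, full faithfulness, monoidality, and the weight argument of Proposition \ref{cyclic} --- reduces the rest of the proof to them.
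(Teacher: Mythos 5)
The paper offers no proof of this proposition---it is imported verbatim as \cite[Theorem 7.6]{cp-duality}---so there is nothing internal to compare against, and your outline is in the end the same move: a reconstruction of the Chari--Pressley argument whose decisive content (which evaluation module a segment character is sent to, the normalization of the spectral parameter in $a,b$, and the matching of Zelevinski's standard-module picture with the highest-$\ell$-weight constituent of the ordered tensor product) is exactly what you defer back to \cite[\S 7]{cp-duality}, as you acknowledge. The categorical wrapper (simplicity preserved and reflected by a fully faithful exact functor with subobject-closed essential image, plus monoidality and the weight argument of Proposition \ref{cyclic}) is sound, and your flagging of the $m=N$ degeneration of $\Lambda^N_q\mathbb{C}^N$ to the trivial $U_q\bigl(\hat{\frak{sl}}_N\bigr)$-module is apt: that boundary case is precisely where the injectivity of $\iota_N$ and the ``moreover'' clause at $k=N$ become delicate, and it is handled in the cited source rather than by anything in your sketch.
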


We finish by noting that the combination of Proposition \ref{cyclic}, Theorem \ref{hern-thm}, Proposition \ref{irr-corr} and \eqref{monoi} directly give the proof of Theorem \ref{transl}.

\bibliographystyle{alpha}
\bibliography{propo2}{}

\end{document}